\theoremstyle{plain}
\newtheorem{theorem}{Theorem}[section]
\newtheorem{lemma}[theorem]{Lemma}
\newtheorem{proposition}[theorem]{Proposition}
\newtheorem{corollary}[theorem]{Corollary}
\theoremstyle{definition}
\newtheorem{definition}[theorem]{Definition}
\theoremstyle{remark}
\newtheorem{remark}[theorem]{Remark}
\newcommand{\norm}[1]{\left\Vert#1\right\Vert}
\newcommand{\vabs}[1]{\left\vert#1\right\vert}
\newcommand{\set}[1]{\left\{#1\right\}}
\newcommand{\R}{\mathbb{R}}
\newcommand{\N}{\mathbb{N}}
\newcommand{\C}{\mathbb{C}}
\newcommand{\Z}{\mathbb{Z}}
\newcommand{\Q}{\mathbb{Q}}
\numberwithin{equation}{section}
\newcommand{\abs}[1]{\lvert#1\rvert}
\newcommand{\sprod}[2]{\left<#1,#2\right>}
\DeclareMathOperator{\supp}{supp}
\DeclareMathOperator{\lyap}{Lyap}
\definecolor{Blue}{rgb}{0.1, 0.1, 0.5}
\definecolor{DBlue}{rgb}{0.1, 0.1, 0.3}
\definecolor{Green}{rgb}{0.1, 0.55, 0.1}
\definecolor{DGreen}{rgb}{0.1, 0.35, 0.1}
\newcommand*\colvec[1]{
        \global\colveccount#1
        \begin{pmatrix}
        \colvecnext
}
\def\colvecnext#1{
        #1
        \global\advance\colveccount-1
        \ifnum\colveccount>0
                \\
                \expandafter\colvecnext
        \else
                \end{pmatrix}
        \fi
}
\DeclareMathOperator{\Tr}{Tr}
\begin{document}

\title{Transport exponents of states with large support}

\author{Vitalii Gerbuz \thanks{The author was supported in part by NSF grant DMS-1361625.}}
\date{}

\maketitle

\abstract{We extend results of Damanik and Tcheremchantsev on estimating transport exponents to initial states supported on more than one site. These general results for upper and lower bounds are then applied to several classes of models, including Sturmian, quasi-periodic and substitution-generated potentials, and the random polymer model.}

\section{Introduction}

We study the quantum evolution under the action of a discrete Hamiltonian $H$ acting on~$l^2(\Z)$ 
:
\begin{equation}\label{schrodinger_equation}
(H\psi)_n = \psi_{n+1}+ \psi_{n-1} + W(n)\psi_n
\end{equation}
where $W:\Z\rightarrow\R$ is called \textit{potential}.

The wave function $\psi :\Z\rightarrow\C$, also called a \textit{quantum state} or a \textit{wave packet}, describes the quantum state of the particle in the sense that $|\psi_n|$ is the probability distribution of the position of the particle in the medium modeled by the potential. The evolution of this distribution from the  initial state $\psi = \psi(0)$  is given by $\psi(t) = e^{-itH}\psi(0)$ (\cite{teschlQM}).  We look at the time-averaged entries of $\psi(t)$:\footnote{Some authors denote time averaged quantities by tilde, e.g. \cite{DGLQ14}. We will not since no non-averaged quantities are considered.}
\begin{equation*}
a(\psi,n,T) = \frac{2}{T}\int\limits_0^\infty e^{-2t/T}|\psi(t,n)|^2 dt
\end{equation*}
It is of interest to study the so-called \textit{transport exponents}, which capture the asymptotic behavior in time of a wave packet. There are several methods that allow estimates on transport exponents. Some of them work for any properly localized initial state. However some methods and papers only treat $\psi = \delta_0$, i.e. a state initially localized at one spot state, and very little is known for initial states with larger support.

In this paper we will prove lower bounds on transport exponents for a certain classes of operators and compactly supported initial states and upper bounds for states with infinite support and exponentially decaying entries. We apply these results to a broad class of examples, in particular, Sturmian potentials, H\"older continuous quasi-periodic potentials, Hamiltonians with potentials generated by period doubling and Thue-Morse substitutions and the random polymer model.

In order to define transport exponents, one needs to introduce the right and left time-averaged \textit{outside probabilities}
\begin{align*}
&P_r(\psi,N,T) = \sum\limits_{n\geq N} a(\psi,n,T)\\
&P_l(\psi,N,T) = \sum\limits_{n\leq -N} a(\psi,n,T)
\end{align*}
The sum of the above is called the time-averaged outside probability
\begin{equation*}
P(\psi,N,T) = P_r(\psi,N,T) + P_l(\psi,N,T)
\end{equation*}
The time evolution of the wavepacket can be captured through the following quantities:
\begin{equation}\label{def_S_minus}
S^-(\psi,\alpha) = -\liminf\limits_{T\rightarrow\infty}\frac{\log P(T^\alpha,T)}{\log T}
\end{equation}
\begin{equation}\label{def_S_plus}
S^+(\psi,\alpha) = -\limsup\limits_{T\rightarrow\infty}\frac{\log P(T^\alpha,T)}{\log T}
\end{equation}
We now define the \textit{upper transport exponents}
\begin{equation*}
\alpha_u^\pm(\psi) = \sup\set{\alpha>0: S^\pm(\psi,\alpha) < \infty}
\end{equation*}
It follows from this definition that if $P(T^\alpha,T)$ goes to $0$ faster then any inverse power of $T$, then $\alpha^+_u \leq \alpha$, and if it is only true for a sequence of times $T_k$, then we get $\alpha^-_u \leq \alpha$.

A slightly different approach to the same dynamical quantities is through the moments of the position operator:
\begin{equation*}
\langle|X_\psi|^p\rangle(T) = \sum\limits_{n\in\Z}|n|^pa(\psi,n,T)
\end{equation*}
For each $p>0$ we define a pair of transport exponents:
\begin{equation*}
\beta^+(\psi,p) = \limsup\limits_{T\rightarrow\infty}\frac{\log \langle|X_\psi|^p\rangle(T)}{p\log T}; \;
\beta^-(\psi,p) = \liminf\limits_{T\rightarrow\infty}\frac{\log \langle|X_\psi|^p\rangle(T)}{p\log T}
\end{equation*}
It was shown in \cite{GKTch04} that $\beta^\pm(\psi,p)$ is a monotone increasing function of $p$ and
\begin{equation}\label{beta_limits_to_alpha}
\lim\limits_{p\rightarrow\infty}\beta^\pm(\psi,p) = \alpha^\pm_u(\psi)
\end{equation}

There are two main ideas to estimating transport exponents. One of them relates transport to the continuity properties of the spectral measures while the other only exploits the rates of transfer matrices growth.
The first approach was initially developed by Guarneri, Combes and Last in \cite{Combes1993}, \cite{Guarneri89}, \cite{LAST96}. For a large class of operators and any initial state $\psi$ they proved the following bound:
\begin{equation}\label{hd_lower_bound}
\beta^-(p) \geq \dim_H\mu_\psi
\end{equation}
where $\dim_H\mu_\psi$ is the Hausdorff dimension of the spectral measure corresponding to $\psi$.
For applications to particular models the work of Jitomirskaya and Last \cite{JL99} provided a tool to estimate Hausdorff dimensions through the growth rates of solutions to the eigenvalue equation via the subordinacy theory. For example for Sturmian potentials with bounded density frequency this was done in \cite{DKL00}.
The benefit of this method is its ability to handle arbitrary initial states. However it is often the case that \ref{hd_lower_bound} gives an estimate that is far from sharp. For instance the random polymer model may exhibit pure-point (and  hence zero-dimentional spectrum) and almost ballistic transport at the same time. This model will be discussed in section \ref{random polymer model}.

We will focus on the second approach developed by Damanik and Tcheremchantsev in \cite{DT03},\cite{DT07},\cite{DT08}. They showed that upper bounds on the norms of transfer matrices lead to lower bounds for transport as well as lower bounds on the transfer matrices lead to upper bounds on the transport exponents. They applied this method to several models including Fibonacci Hamiltonian, Thue-Morse, Period-Doubling and quasi-periodic potentials generated by circle rotation with trigonometric polynomial sampling function. Recently Jitomirskaya and Mavi \cite{JM16} used the same method to prove vanishing of transport exponents for quasi-periodic potentials generated by piecewise-H\"older function. This broad class of models, in particular, includes the almost Mathew operator -- one of the most prominent Schr\"odinger operators. This approach was initially developed and applied to the initial state localized at one spot. We extend several of these results to the case of exponentially localized initial states.

\section{The Main Results}

The main results of this paper, upper and lower bounds on transport exponents under fairly general asumptions, are given in Theorem~\ref{thm_upper_bound_transport_via_tr_m} and Theorem~\ref{main_result}.

However to develop some ideas of working with nonsingular states let us start with a fairly obvious statement saying that the sum of two wavepackets cannot propagate faster then any of them individually.

Fix operator $H$ of the form \ref{schrodinger_equation}.
\begin{proposition}\label{exponent_sublinearity}
Let $\psi_1, \psi_2 \in L^2(\Z)$ such that $\beta^+(\psi_1,p), \beta^+(\psi_2,p)$ exist and
\begin{align*}
 &\beta^+({\psi_1},p) \leq \beta_1\\
 &\beta^+({\psi_2},p) \leq \beta_2,
\end{align*}
then for any $\psi = x\psi_1+ y\psi_2,\,x,y\in\R$
\begin{equation}
\beta^+(\psi,p)\leq \max\set{\beta_1, \beta_2}.
\end{equation}
\end{proposition}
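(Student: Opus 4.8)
The statement says transport exponents of a linear combination are controlled by the maximum of the individual ones. The natural approach is to track the quantity $\langle|X_\psi|^p\rangle(T)$ directly and exploit that it is (up to constants) subadditive under forming linear combinations. The key point is that the map $\psi \mapsto a(\psi,n,T) = \frac{2}{T}\int_0^\infty e^{-2t/T}|\psi(t,n)|^2\,dt$ is built from $|\psi(t,n)|^2 = |(e^{-itH}\psi)_n|^2$, and since $e^{-itH}$ is linear, $\psi(t,n) = x\psi_1(t,n) + y\psi_2(t,n)$. So the whole problem reduces to an elementary inequality for $|x a + y b|^2$ in terms of $|a|^2$ and $|b|^2$.

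**Key steps.** First I would observe that for complex numbers, $|xa+yb|^2 \le 2x^2|a|^2 + 2y^2|b|^2$ (or use $|xa+yb|^2 \le (|x|+|y|)(|x||a|^2+|y||b|^2)$, any crude bound suffices). Applying this with $a = \psi_1(t,n)$, $b = \psi_2(t,n)$, multiplying by $|n|^p$, summing over $n\in\Z$, and then integrating against $\frac{2}{T}e^{-2t/T}\,dt$ gives
\begin{equation*}
\langle|X_\psi|^p\rangle(T) \le 2x^2 \langle|X_{\psi_1}|^p\rangle(T) + 2y^2 \langle|X_{\psi_2}|^p\rangle(T).
\end{equation*}
Second, I would feed this into the definition of $\beta^+$. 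Writing $C = 2\max\{x^2,y^2\}$ and $\beta = \max\{\beta_1,\beta_2\}$, for any $\varepsilon>0$ the hypotheses give $\langle|X_{\psi_i}|^p\rangle(T) \le T^{p(\beta+\varepsilon)}$ for all $T$ large (and all $p$ as needed), so $\langle|X_\psi|^p\rangle(T) \le 2C\,T^{p(\beta+\varepsilon)}$ for $T$ large. Taking $\log$, dividing by $p\log T$, and letting $T\to\infty$ yields $\beta^+(\psi,p) \le \beta + \varepsilon$; since $\varepsilon>0$ is arbitrary, $\beta^+(\psi,p) \le \max\{\beta_1,\beta_2\}$.

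There is essentially no hard part here; the only things to be slightly careful about are (i) the interchange of the sum over $n$ and the integral in $t$, which is fine by monotone convergence since everything is non-negative, and (ii) making sure the additive constant $2C$ is absorbed harmlessly in the logarithm after dividing by $p\log T$, which it is since $\log(2C)/(p\log T)\to 0$. One could also note that $\langle|X_\psi|^p\rangle(T)$ could in principle be infinite for general $L^2$ states, but under the stated hypothesis that $\beta^+(\psi_i,p)$ exist the relevant moments are finite for large $T$, so the argument is valid as soon as $T$ is large enough. I would also remark in passing that the same computation shows the analogous statement for $\alpha_u^\pm$ via \eqref{beta_limits_to_alpha}, and that an identical (even easier) argument using $P(\psi,N,T) = \sum_{|n|\ge N} a(\psi,n,T)$ gives the corresponding bound for $S^\pm$ and hence $\alpha_u^\pm$ directly.
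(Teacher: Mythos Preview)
Your proposal is correct and follows essentially the same route as the paper: linearity of $e^{-itH}$ plus the elementary inequality $|xa+yb|^2\le 2x^2|a|^2+2y^2|b|^2$ gives $\langle|X_\psi|^p\rangle(T)\lesssim \langle|X_{\psi_1}|^p\rangle(T)+\langle|X_{\psi_2}|^p\rangle(T)$, and then one takes $\log$, divides by $p\log T$, and passes to the $\limsup$. If anything your write-up is slightly more careful (explicit factor of $2$, explicit $\varepsilon$-argument) than the paper's terse version.
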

The proof of this proposition will be given in Section~\ref{section_upper_bounds}. For now we will just state a corollary.
\begin{corollary}
The same is true for $\alpha^+$.
\begin{proof}
Follows from \ref{beta_limits_to_alpha}.
\end{proof}
\end{corollary}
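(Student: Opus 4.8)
The plan is to transfer the bound of Proposition~\ref{exponent_sublinearity} from each finite moment exponent $\beta^+(\cdot,p)$ to the limiting exponent $\alpha_u^+$ by letting $p\to\infty$ and invoking \ref{beta_limits_to_alpha}. So fix $\psi=x\psi_1+y\psi_2$ with $x,y\in\R$ and suppose $\alpha_u^+(\psi_1)\le\beta_1$ and $\alpha_u^+(\psi_2)\le\beta_2$; the goal is $\alpha_u^+(\psi)\le\max\set{\beta_1,\beta_2}$.

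First I would note that for every $p>0$ the quantities $\beta^+(\psi_1,p)$ and $\beta^+(\psi_2,p)$ are well defined as a $\limsup$, and for Hamiltonians of the form \ref{schrodinger_equation} with bounded potential they are finite (in fact $\le 1$), so the hypotheses of Proposition~\ref{exponent_sublinearity} are satisfied with the choice $\beta_i=\beta^+(\psi_i,p)$. The proposition then gives, for each $p>0$,
\begin{equation*}
\beta^+(\psi,p)\le\max\set{\beta^+(\psi_1,p),\beta^+(\psi_2,p)}.
\end{equation*}
Now I would pass to the limit $p\to\infty$. By the monotonicity of $\beta^\pm(\cdot,p)$ in $p$ established in \cite{GKTch04}, each of the three functions appearing above is monotone in $p$, so the limits exist, and by \ref{beta_limits_to_alpha} they equal $\alpha_u^+(\psi)$, $\alpha_u^+(\psi_1)$ and $\alpha_u^+(\psi_2)$ respectively. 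Since $\max$ is continuous, taking $p\to\infty$ in the displayed inequality yields
\begin{equation*}
\alpha_u^+(\psi)\le\max\set{\alpha_u^+(\psi_1),\alpha_u^+(\psi_2)}\le\max\set{\beta_1,\beta_2},
\end{equation*}
which is the assertion.

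The only point that requires care is verifying that the hypotheses of Proposition~\ref{exponent_sublinearity} genuinely hold for every $p$ — in particular that the $\beta^+(\psi_i,p)$ are finite — which is exactly where boundedness of the potential $W$ (already implicit throughout, since otherwise even the propagation of $\delta_0$ need not be controlled) is used. Once that is granted, the remainder is a routine limit argument, so the one-line proof "follows from \ref{beta_limits_to_alpha}" is accurate.
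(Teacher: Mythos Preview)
Your proof is correct and is exactly the intended elaboration of the paper's one-line argument: apply Proposition~\ref{exponent_sublinearity} at each $p$ and pass to the limit via \ref{beta_limits_to_alpha}. The only minor simplification is that, since $\beta^+(\cdot,p)$ is monotone in $p$ with limit $\alpha_u^+(\cdot)$, the hypothesis $\alpha_u^+(\psi_i)\le\beta_i$ already gives $\beta^+(\psi_i,p)\le\beta_i$ for every $p$, so you can apply the proposition directly with the fixed bounds $\beta_1,\beta_2$ rather than the $p$-dependent ones --- but this is cosmetic.
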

An upper bound on the $\beta^\pm$  transport exponents of the state initially localized at one site was proved in \cite{DT07}. For a wide class of operators the $\beta^+$ estimate can be extended to finitely supported initial states according to this proposition.
However it is not clear whether a similar result holds for $\beta^-(p)$. So we provide a corollary of \cite[Theorem 7]{DT07} which is a key statement leading to estimates on both $\alpha^+$ and $\alpha^-$.


We need a notion of the transfer matrices associated with a given Schr\"odinger operator~\ref{schrodinger_equation}.

\textit{Notation:} for any sequence $u\in\C^\infty$ we will write capital letter for a corresponding sequence of vectors:
\begin{equation*}
U(n) = \colvec{2}{u_{n+1}}{u_n}
\end{equation*}
For $n,k\in\Z,\, z\in\C$ define the transfer matrix
\begin{equation*}
M(n,k,z) =
\begin{cases}
A(n,z)\cdots A(k+1,z) &n > k,\\
Id &n=k,\\
A^{-1}(n+1,z)\cdots A^{-1}(k,z), &n < k
\end{cases}.
\end{equation*}
where
\begin{equation*}
A(n,z) = \begin{pmatrix}
z-W(n) & -1\\
1 & 0
\end{pmatrix}.
\end{equation*}
Thus $u:\Z\rightarrow\C$ solves $Hu_z = z u_z$ if and only if
\begin{equation*}
U_z(n) = M(n,k,z) U_z(k) \quad \forall n,k\in\Z.
\end{equation*}

Our first result gives an upper bound for the upper transport exponents. In Section~\ref{section_application} this theorem and ideas of its proof will be applied to Sturmian and piecewise H\"older continuous potentials.

\begin{definition}
A state $\psi\in\ell^2(\Z), \psi = \sum\psi_i\delta_i$ is called \textit{exponentially decaying} if there are constants $D,a > 0$ such that $|\psi(n)| \leq De^{-a|n|}.$
\end{definition}

\begin{theorem}\label{thm_upper_bound_transport_via_tr_m}
Let $H$ be a Schr\"odinger operator with bounded potential $V$ and $K>4$ such that $\sigma(H)\subseteq[-K+1, K-1]$ and let $\psi$ be exponentially decaying. Assume that for some $C>0, \alpha\in(0,1)$, as $T\rightarrow\infty$
\begin{equation}\label{as1}
\max\limits_{-\frac{CT^\alpha}{2}< k < \frac{CT^\alpha}{2}} \int\limits_{-K}^K \left(\max\limits_{k\leq n \leq CT^\alpha}\norm{M(n,k,E+\frac{i}{T})} \right)^{-2}dE = O(T^{-m})
\end{equation}
and
\begin{equation}\label{as2}
\max\limits_{-\frac{CT^\alpha}{2}< k < \frac{CT^\alpha}{2}} \int\limits_{-K}^K \left(\max\limits_{-CT^\alpha\leq n \leq k}\norm{M(n,k,E+\frac{i}{T})} \right)^{-2}dE = O(T^{-m})
\end{equation}

for every $m > 1$.
Then
\begin{equation*}
\alpha^+_u(\psi) \leq \alpha.
\end{equation*}
If the conditions are satisfied for some sequence of times $T_l\rightarrow\infty$, then $\alpha^-_u(\psi) \leq \alpha$.
\end{theorem}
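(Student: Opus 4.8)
The plan is to reduce the estimate for $\psi$ to the case of single-site states $\delta_k$, for which a bound of the desired type is essentially \cite[Theorem 7]{DT07}. The first step is to record the single-site version, uniform in the base site: inspecting the proof of \cite[Theorem 7]{DT07} with the site $0$ replaced by an arbitrary $k$, one obtains constants $C,c,p>0$ depending only on $K$ and $\norm{V}_\infty$ such that, for all large $T$ and all $k$ with $\abs{k}<CT^\alpha/2$,
\begin{equation*}
P(\delta_k,CT^\alpha,T)\le C T^{p}\bigl(J^r_k(T)+J^l_k(T)\bigr)+Ce^{-cT},
\end{equation*}
where $J^r_k(T)$ and $J^l_k(T)$ denote the integrals over $[-K,K]$ appearing in \eqref{as1} and \eqref{as2} respectively, without the outer maximum over $k$. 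Here the two integral terms come from the part of the spectral expansion with energies in $[-K,K]$, and the term $Ce^{-cT}$ from a Combes--Thomas estimate for energies outside $[-K,K]$; this is where $K>4$ and $\sigma(H)\subseteq[-K+1,K-1]$ are used. Taking the maximum over $\abs{k}<CT^\alpha/2$ and invoking \eqref{as1} and \eqref{as2}, the right-hand side is $O(T^{-m})$ for every $m>1$.

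The second step passes from $\delta_k$ to $\psi$. Write $\psi=\sum_k\psi(k)\delta_k$; since $\psi$ is exponentially decaying, $\norm{\psi}_1<\infty$ and $\sum_{\abs{k}\ge R}\abs{\psi(k)}\le C'e^{-aR}$ for suitable $C',a>0$. By linearity $\psi(t,n)=\sum_k\psi(k)\,(e^{-itH}\delta_k)(n)$, so the Cauchy--Schwarz inequality with weights $\abs{\psi(k)}$ gives $\abs{\psi(t,n)}^2\le\norm{\psi}_1\sum_k\abs{\psi(k)}\,\abs{(e^{-itH}\delta_k)(n)}^2$. Integrating against $\frac{2}{T}e^{-2t/T}\,dt$ (all terms nonnegative, so Tonelli applies) and summing over $\abs{n}\ge N$ yields
\begin{equation*}
P(\psi,N,T)\le\norm{\psi}_1\sum_k\abs{\psi(k)}\,P(\delta_k,N,T).
\end{equation*}

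The third step takes $N=CT^\alpha$ and splits the sum at $\abs{k}=CT^\alpha/2$. For $\abs{k}\ge CT^\alpha/2$ one uses $P(\delta_k,N,T)\le1$ together with $\sum_{\abs{k}\ge CT^\alpha/2}\abs{\psi(k)}\le C'e^{-aCT^\alpha/2}$, which decays faster than any inverse power of $T$ since $\alpha>0$. For $\abs{k}<CT^\alpha/2$ one inserts the single-site estimate; since $\sum_k\abs{\psi(k)}\le\norm{\psi}_1$, this part is at most $\norm{\psi}_1^2\bigl(CT^{p}\cdot O(T^{-m})+Ce^{-cT}\bigr)$, again $O(T^{-m})$ for every $m$. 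Hence $P(\psi,CT^\alpha,T)$ decays faster than any inverse power of $T$. For any $\alpha'>\alpha$ and $T$ large we have $T^{\alpha'}\ge CT^\alpha$, so by monotonicity of $P$ in $N$, $P(\psi,T^{\alpha'},T)=O(T^{-m})$ for every $m$; thus $S^+(\psi,\alpha')=+\infty$, so $\alpha_u^+(\psi)\le\alpha'$ for every $\alpha'>\alpha$, i.e.\ $\alpha_u^+(\psi)\le\alpha$. If \eqref{as1} and \eqref{as2} hold only along a sequence $T_l\to\infty$, the same computation gives $P(\psi,T_l^{\alpha'},T_l)=O(T_l^{-m})$, so $S^-(\psi,\alpha')=+\infty$ and $\alpha_u^-(\psi)\le\alpha$. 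The main obstacle is the first step: one must verify that the resolvent and Green's-function estimates underlying \cite[Theorem 7]{DT07} are genuinely uniform in the base site $k$ across the whole range $\abs{k}<CT^\alpha/2$, and that the finite-interval truncations used there do not spoil this; the remaining steps are soft.
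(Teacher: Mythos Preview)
Your approach is essentially identical to the paper's: invoke \cite[Theorem 7]{DT07} for each $\delta_k$, pass to $\psi$ by a Cauchy--Schwarz inequality relating $a(\psi,n,T)$ to $\sum_k a(\delta_k,n,T)$, split the sum at $\abs{k}=CT^\alpha/2$, and use exponential decay of $\psi$ for the tail. On your stated concern about uniformity in $k$, the paper handles this in one line by observing that evolving $\delta_k$ under $H$ equals evolving $\delta_0$ under the potential shifted by $-k$, and the implicit constants in \cite[Theorem 7]{DT07} depend only on $K$; note also that the Combes--Thomas term is $e^{-cN}=e^{-cCT^\alpha}$ rather than $e^{-cT}$, but since $\alpha>0$ this still decays faster than any power and the argument is unaffected.
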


Assumptions \ref{as1}, \ref{as2} can be rewritten in a simpler form in the case when the potential $W$ in \ref{schrodinger_equation} is dynamically defined. Such potentials make up a large class to which many of popular models belong. For example Sturmian potentials, Almost Mathieu operator and general quasi-periodic operators.

 Recall that any dynamical system $(\Omega,T), T:\Omega\rightarrow \Omega$ together with a sampling function~$f:\Omega\rightarrow \R$ defines a family of Schr\"odinger operators $H(\omega)$ given by \ref{schrodinger_equation} with potentials
\begin{equation*}
W(n,\omega) = f(T^n(\omega)), \, \omega\in \Omega.
\end{equation*}
We will assume that $T$ is a bijection on $\Omega$.
\begin{corollary}\label{cor_dyn_upper_bound}
Given dynamically defined family of Schr\"odinger operators $H(\omega)$ assume for some $K>4$, $\sigma(H(\omega))\subseteq[-K+1, K-1]$ for all $\omega\in \Omega$, $\psi$  is exponentially decaying. Assume for some $C>0, \alpha\in(0,1)$ and for every $m$,, as $T\rightarrow\infty$
\begin{equation}\label{as_dyn}
T^m \int\limits_{-K}^K \left(\max\limits_{0\leq n \leq \frac{CT^\alpha}{2}}  \norm{M(n,0,E+\frac{i}{T},\omega)} \right)^{-2} dE \rightarrow 0
\end{equation}
uniformly in $\omega$.
Then the conclusion of the theorem \ref{thm_upper_bound_transport_via_tr_m} holds for every $H(\omega)$.
\end{corollary}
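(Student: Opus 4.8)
The plan is to reduce the hypotheses \eqref{as_dyn} of the corollary to the hypotheses \eqref{as1}–\eqref{as2} of Theorem~\ref{thm_upper_bound_transport_via_tr_m}, applied to the fixed operator $H = H(\omega)$, and then invoke that theorem directly. The essential point is that for a dynamically defined potential the transfer matrix starting at an arbitrary base point $k$ is conjugate (via translation of $\omega$) to the transfer matrix starting at $0$ for a shifted element of $\Omega$: concretely, since $W(n+k,\omega) = f(T^{n+k}\omega) = W(n, T^k\omega)$, one has the cocycle identity
\begin{equation*}
M(n,k,z,\omega) = M(n-k, 0, z, T^k\omega).
\end{equation*}
Thus $\max_{k \le n \le CT^\alpha}\norm{M(n,k,z,\omega)} = \max_{0 \le j \le CT^\alpha - k}\norm{M(j,0,z,T^k\omega)}$, and since $k < CT^\alpha/2$ the upper limit $CT^\alpha - k$ exceeds $CT^\alpha/2$, so this max dominates $\max_{0 \le j \le CT^\alpha/2}\norm{M(j,0,z,T^k\omega)}$. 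Because \eqref{as_dyn} holds uniformly in $\omega$, it holds in particular with $\omega$ replaced by $T^k\omega$, uniformly over all such $k$; taking the reciprocal-squared and integrating gives \eqref{as1} for $H(\omega)$.

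For \eqref{as2} I would handle the leftward-growing transfer matrices. Here the natural move is to relate $M(n,k,z,\omega)$ for $n < k$ to a forward product for the \emph{reflected} potential, or alternatively to use the relation $\norm{M(n,k,z,\omega)} = \norm{M(k,n,z,\omega)^{-1}} = \norm{M(k,n,z,\omega)}$ (the last equality because these are $2\times 2$ matrices of determinant $1$, for which $\norm{A^{-1}} = \norm{A}$). Then $\max_{-CT^\alpha \le n \le k}\norm{M(n,k,z,\omega)} = \max_{-CT^\alpha \le n \le k}\norm{M(k,n,z,\omega)}$, and writing $k = n + j$ with the cocycle identity $M(k,n,z,\omega) = M(j, 0, z, T^n\omega)$ where $j = k - n$ ranges up to $CT^\alpha + k$; again since $k > -CT^\alpha/2$ this range exceeds $CT^\alpha/2$, so the max dominates $\max_{0 \le j \le CT^\alpha/2}\norm{M(j,0,z,T^n\omega)}$, and the uniform-in-$\omega$ hypothesis \eqref{as_dyn} (with $\omega \mapsto T^n\omega$) closes the estimate. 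The hypothesis $\sigma(H(\omega)) \subseteq [-K+1,K-1]$ is assumed for all $\omega$, so in particular for the $\omega$ at hand, and the potential is bounded (since $f$ defines it and the spectral condition forces boundedness of $W(\cdot,\omega)$, or one simply assumes $f$ bounded), so all standing hypotheses of Theorem~\ref{thm_upper_bound_transport_via_tr_m} are met.

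The only subtlety I anticipate is bookkeeping with the index ranges: one must make sure that after the translation $\omega \mapsto T^k\omega$ (resp.\ $T^n\omega$) the shifted window of indices $[0, CT^\alpha - k]$ (resp.\ $[0, CT^\alpha + k]$) genuinely contains $[0, CT^\alpha/2]$ for \emph{every} admissible $k$, which is exactly why the outer maximum in \eqref{as1}–\eqref{as2} is taken over $|k| < CT^\alpha/2$ rather than a larger set — this is the matching that makes the reduction work, and it is the step deserving the most care. If \eqref{as_dyn} holds only along a sequence $T_l \to \infty$, the same argument produces \eqref{as1}–\eqref{as2} along $T_l$, and Theorem~\ref{thm_upper_bound_transport_via_tr_m} then yields $\alpha_u^-(\psi) \le \alpha$.
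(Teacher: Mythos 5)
Your reduction of \eqref{as1} to \eqref{as_dyn} is clean and is exactly the paper's argument: the cocycle identity $M(n,k,z,\omega)=M(n-k,0,z,T^k\omega)$ turns the forward max from base point $k$ into a forward max from base point $0$ at the shifted phase $T^k\omega$, and since $|k|<CT^\alpha/2$ the window $[0,CT^\alpha-k]$ contains $[0,CT^\alpha/2]$, so uniformity in $\omega$ finishes the job. For \eqref{as2} you also follow the paper: use $\norm{M(n,k)}=\norm{M(k,n)}$ (unimodularity) and the cocycle to rewrite the leftward max as a forward one at a shifted phase. However, the step you explicitly flag as ``the subtlety deserving the most care'' (the index-range bookkeeping) is not in fact the delicate part; the delicate part is one you glide over, namely that after your rewriting the shifted phase depends on the max variable.

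Concretely, you obtain
\begin{equation*}
\max_{-CT^\alpha\le n\le k}\norm{M(n,k,z,\omega)} \;=\; \max_{-CT^\alpha\le n\le k}\norm{M(k-n,0,z,T^n\omega)},
\end{equation*}
and then you assert that this dominates $\max_{0\le j\le CT^\alpha/2}\norm{M(j,0,z,T^n\omega)}$ ``with $\omega\mapsto T^n\omega$.'' But $n$ is the variable being maximized; there is no single $n$ (and hence no single shifted phase) that you can plug into \eqref{as_dyn}. Equivalently, the left-hand max runs over transfer-matrix windows that \emph{end} at site $k$, while any single choice $\omega'=T^{n_0}\omega$ in \eqref{as_dyn} controls windows that \emph{start} at $n_0$; these are different families of windows and neither dominates the other pointwise. (The paper's own proof commits the same slip in its last inequality.) The fix is a short unimodular-submultiplicativity argument: set $n_0=k-\lceil CT^\alpha/2\rceil$ (which lies in $[-CT^\alpha,k]$ since $k>-CT^\alpha/2$) and note that for $n_0\le n\le k$,
\begin{equation*}
\norm{M(n,n_0,z,\omega)} \;\le\; \norm{M(n,k,z,\omega)}\,\norm{M(k,n_0,z,\omega)} \;\le\; Q(z)^2,
\qquad Q(z):=\max_{-CT^\alpha\le n'\le k}\norm{M(n',k,z,\omega)},
\end{equation*}
using $\det M=1$ twice. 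Hence $\max_{0\le j\le CT^\alpha/2}\norm{M(j,0,z,T^{n_0}\omega)}\le Q(z)^2$, so $Q(z)^{-2}\le\bigl(\max_{0\le j\le CT^\alpha/2}\norm{M(j,0,z,T^{n_0}\omega)}\bigr)^{-1}$; integrating in $E$ and applying Cauchy--Schwarz on $[-K,K]$ converts the $(-1)$-power into the $(-2)$-power at the cost of a square root, which costs a factor of $2$ in the exponent $m$ --- harmless, since \eqref{as_dyn} is assumed for every $m$. With this repair the reduction of \eqref{as2} to \eqref{as_dyn} is sound, and the rest of your plan (invoking Theorem~\ref{thm_upper_bound_transport_via_tr_m}, and the subsequence variant) is exactly right.
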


Now we will discuss results concerning lower bounds on the transport exponents of initial states with compact support.

In \cite{DT03} Damanik and Tcheremchantsev proved dynamical lower bounds on the spreading of the initially localized wavepacket given a bound on the growth rate of the norms of transfer matrices. A similar result was proved for continuum Schr\"odinger operators in \cite{DLS06} by Damanik, Lenz and Stolz. The key difference from \cite{DT03} was that the estimate was proved for arbitrary initial state $f$ with compact support. However this extension required an additional assumption in the theorem: $\left<f,u\right> \neq 0$ for at least one solution $u$ of $Hu=E_0u$.

It turns out that a similar theorem can be proved in the discrete case:

\begin{theorem}\label{main_result}
Let $supp(\psi)\subset[-s,s]$; $\varepsilon_0,C,K,\alpha\in(0,\infty)$. Assume for every $N\in\N$ there exists a closed Borel set $A(N)\subset [-K,K]$ such that
\begin{equation}\label{transfer_matrix_bound}
\norm{M(n,m;E)}\leq CN^\alpha\quad \forall n,m: |n|,|m|<N, \forall E\in A(N)
\end{equation}
and for every $E\in A(N)$ there exists a formal solution $v_E$ to $Hv = Ev$ normalized by $\norm{V_E(0)} = 1$ such that
\begin{equation}\label{positive_product_condition}
|\sprod{\psi}{v_E}|>\varepsilon_0.
\end{equation}
Let $N(T) = T^{1/(1+\alpha)}$ and let $B(T)$ be the $1/T$ neighborhood of $A(N(T))$:
\begin{equation*}
B(T) = \set{E\in\R: \exists E'\in A(N), |E-E'|<\frac{1}{T}}.
\end{equation*}
Then for some $\hat C>0$ and all $T$ sufficiently large the following holds:
\begin{equation}\label{outside_probability_estimate}
P(\psi,\frac{N(T)}{2},T) \geq \frac{\hat C}{T}|B(T)|N^{1-2\alpha}(T)
\end{equation}
In particular we have a bound for the moments of the position operator:
\begin{equation}\label{moments_bound}
\langle|X|^p_\psi \rangle(T) \geq \frac{\hat C}{2^p} |B(T)|T^\frac{p-3\alpha}{1+\alpha}
\end{equation}
\end{theorem}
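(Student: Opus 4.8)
The plan is to follow the Damanik–Tcheremchantsev strategy from \cite{DT03}, but to carry the compactly supported initial state $\psi$ along instead of $\delta_0$, using hypothesis \eqref{positive_product_condition} as the substitute for the single-site overlap. The starting point is a lower bound for the time-averaged amplitudes $a(\psi,n,T)$ in terms of transfer matrices. Writing $\psi(t) = e^{-itH}\psi$ and using the Laplace-transform representation of $a(\psi,n,T)$, one expresses $\sum_n |n|^{\text{(something)}}$-type weighted sums of $a(\psi,n,T)$ through a contour integral of the resolvent $(H-E-i/T)^{-1}$ applied to $\psi$; this is the standard identity $a(\psi,n,T)=\frac{1}{\pi T}\int_{\R}|\langle \delta_n,(H-E-\tfrac iT)^{-1}\psi\rangle|^2\,dE$ up to constants. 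The Green function entries $\langle\delta_n,(H-E-\tfrac iT)^{-1}\psi\rangle$ can then be compared, via the transfer matrix formalism and the Combes–Thomas / Wronskian bookkeeping on the finite interval $[-s,s]\supset\supp\psi$, to the growth of the formal solution $v_E$ with $\|V_E(0)\|=1$; here the assumption $|\langle\psi,v_E\rangle|>\varepsilon_0$ guarantees that the ``source'' $\psi$ genuinely excites the solution $v_E$ and is not annihilated by projection onto it. The commented-out hypothesis about $\|M(s,1;E)\|,\|M(1,-s-1;E)\|$ being bounded is subsumed by \eqref{transfer_matrix_bound} applied with $N$ large enough that $s<N$, so on $A(N)$ the propagation across $\supp\psi$ costs only a factor $CN^\alpha$.

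The second step is the core quantitative estimate: for $E\in A(N)$ and $N=N(T)=T^{1/(1+\alpha)}$, I would show that a positive fraction of the $\ell^2$-mass of $(H-E-\tfrac iT)^{-1}\psi$ sits at sites with $|n|\geq N(T)/2$. Concretely, on the annulus $N(T)/2\le |n|\le N(T)$ one bounds $\|U_E(n)\|$ from below by $\|V_E(0)\|$ times the inverse of a transfer-matrix norm, which on $A(N)$ is $\geq (CN^\alpha)^{-1}$; summing over the $\sim N(T)$ such sites and over the energy window $B(T)$ of width $\gtrsim |B(T)|$, and dividing by the normalization $T$ coming from the $\frac{1}{T}\int_0^\infty e^{-2t/T}\,dt$ prefactor, produces exactly $P(\psi,N(T)/2,T)\gtrsim \frac1T |B(T)| N(T)^{1-2\alpha}$: one factor $N^{-2\alpha}$ from squaring the transfer-matrix bound, one factor $N$ from counting sites. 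The choice $N(T)=T^{1/(1+\alpha)}$ is dictated by balancing $N^\alpha$ against the resolvent parameter $1/T$ so that the ``ballistic up to time $T$'' heuristic holds out to spatial scale $N(T)$. The moment bound \eqref{moments_bound} then follows immediately: $\langle |X|^p_\psi\rangle(T)\geq (N(T)/2)^p\,P(\psi,N(T)/2,T)\geq \frac{\hat C}{2^p}|B(T)|\,T^{(p-3\alpha)/(1+\alpha)}$, after substituting $N(T)=T^{1/(1+\alpha)}$ and collecting the exponents $\frac{p}{1+\alpha}+\frac{1-2\alpha}{1+\alpha}-1=\frac{p-3\alpha}{1+\alpha}$.

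The main obstacle I anticipate is the first step done uniformly and correctly for a non-$\delta_0$ source: one must control $\langle\delta_n,(H-E-\tfrac iT)^{-1}\psi\rangle$ for $n$ outside $\supp\psi$ in terms of a single formal solution $v_E$, and the only way the hypothesis gives traction is through $|\langle\psi,v_E\rangle|>\varepsilon_0$. Making this rigorous means writing the Green function on $\Z$ as a combination of the two half-line solutions (the decaying one on each side at $E+\tfrac iT$) and then, as $T\to\infty$ so that $\tfrac iT\to 0$, relating these to $v_E$; one needs that the relevant Wronskian does not degenerate and that the overlap $\langle\psi,v_E\rangle$ of the genuine solution with $\psi$ controls the coefficient of the outgoing piece. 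The uniformity in $E\in A(N)$ — needed so that integrating over $B(T)$ really gains the full factor $|B(T)|$ — is where the constants must be tracked with care; everything else is the now-standard counting argument. I would handle this by isolating the needed lower bound as a lemma of the form ``if $\|V_E(0)\|=1$, $|\langle\psi,v_E\rangle|>\varepsilon_0$ and $\|M(n,m;E)\|\le CN^\alpha$ on $[-N,N]$, then $a(\psi,n,T)\ge c\,T^{-1}\|M(n,0;E\pm\tfrac iT)\|^{-2}$-type bound for $N(T)/2\le|n|\le N(T)$,'' proved by the Green-function expansion above, and then assemble \eqref{outside_probability_estimate} and \eqref{moments_bound} from it.
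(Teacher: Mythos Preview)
Your overall architecture matches the paper's: Parseval identity, the key inequality $\|U_z(n)\|\ge \|M(n,s{+}1;z)\|^{-1}\|U_z(s{+}1)\|$ for $|n|>s$, extension of the transfer-matrix bound to complex $z$ via \cite[Lemma~2.1]{DT03}, summing $N/2\le|n|\le N$ to pick up the factor $N^{1-2\alpha}$, and the arithmetic for \eqref{moments_bound}. The only substantive divergence is how you propose to handle what you correctly identify as the main obstacle, namely the uniform lower bound on $\|U_z(s{+}1)\|^2+\|U_z(-s{-}1)\|^2$ coming from \eqref{positive_product_condition}.

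You suggest writing the Green function via the two half-line decaying solutions at $E+i/T$ and tracking the Wronskian as $T\to\infty$. The paper does \emph{not} do this. Instead it proves a short lemma (Lemma~\ref{lemma_liminf}) which argues by contrapositive: if along some sequence $z_n\to E_0$ the boundary data $|u_{z_n}(s{+}1)|^2+|u_{z_n}(s)|^2+|u_{z_n}(-s)|^2+|u_{z_n}(-s{-}1)|^2$ stayed below $\varepsilon$, then by compactness in $\C^2$ one extracts a subsequential limit $u_{E_0}$ which is a \emph{formal} solution of $Hu=E_0u+\psi$ with small boundary values on $[-s,s]$. A finite-interval integration by parts against any solution $v_{E_0}$ of the homogeneous equation then gives $|\langle\psi,v_{E_0}\rangle|\le\sqrt{\varepsilon}(\|V_{E_0}(s)\|+\|V_{E_0}(-s{-}1)\|)$, contradicting \eqref{positive_product_condition}. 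This approach sidesteps exactly the issue you flagged---``the relevant Wronskian does not degenerate''---because it never invokes $\ell^2$ decaying solutions or their Wronskian at all; it works purely with formal solutions and a finite-dimensional limit. It also automatically bounds the overlap with \emph{every} solution $v_{E_0}$, so the hypothesis that merely \emph{some} $v_E$ has large overlap suffices. Your half-line-solution route would have to confront the fact that on the spectrum the decaying solutions need not converge, and that the particular $v_E$ in the hypothesis has no a priori relation to either $u_\pm(\cdot,E{+}i/T)$.

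Two smaller points. First, the uniformity in $E\in A(N)$ you mention is obtained in the paper by covering the compact $A(N)$ with the open sets $\{z:|z-E|<\delta(E)\}$ on which the lower bound holds, extracting a finite subcover, and taking $T$ large enough that $1/T$ is below the minimal $\delta$; you should make this explicit. Second, for the bound $\|V_E(s)\|,\|V_E(-s{-}1)\|\le C_s$ the paper simply uses that the entries of $M(s,0;E)$ are polynomials in $E$ and $E\in[-K,K]$, giving a constant depending only on $s,K$; your remark that \eqref{transfer_matrix_bound} subsumes this would give an $N$-dependent bound $CN^\alpha$, which is harmless here but less clean.
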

\begin{remark}
The only difference from the \cite[Theorem 1]{DT03} is the assunption \ref{positive_product_condition}. It is not clear whether this condition is actually necessary for the result to hold.
\end{remark}

\begin{corollary}\label{cor_one_energy_finite}
Let $supp(\psi)\subset [-s,s]$, $E_0\in \R$,
\begin{equation}\label{one_energy_condition}
\norm{M(n,m;E_0)}\leq C(|n|+|m|)^\alpha\; \forall n,m\in\Z
\end{equation}
and
\begin{equation}\label{one_energy_nonzero_cond}
\sprod{\psi}{v_{E_0}} \neq 0\text{ for some solution of }Hv = E_0v
\end{equation}
then
\begin{equation}\label{pth_exponent_estimate}
\beta^-(\psi,p) \geq \frac{1}{1+\alpha}-\frac{1+4\alpha}{p(1+\alpha)}
\end{equation}
In particular,
\begin{equation*}
\alpha^-(\psi) \geq \frac{1}{1+\alpha}
\end{equation*}
\begin{proof}
In Theorem~\ref{main_result} take $A(N) = \set{E_0}$. Then $|B(T)| = \frac{2}{T}$, so the result follows from \ref{moments_bound}.
\end{proof}
\end{corollary}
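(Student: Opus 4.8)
The plan is as follows. Corollary~\ref{cor_one_energy_finite} is the special case $A(N)=\{E_0\}$ of Theorem~\ref{main_result}: then \ref{one_energy_condition} gives \ref{transfer_matrix_bound} (since $(|n|+|m|)^\alpha\le(2N)^\alpha$ for $|n|,|m|<N$), while \ref{one_energy_nonzero_cond} gives \ref{positive_product_condition} for a suitable $\varepsilon_0>0$ after rescaling $v_{E_0}$ so that $\norm{V_{E_0}(0)}=1$ --- possible because $V_{E_0}(0)=0$ would force $v_{E_0}\equiv0$ --- with $K=|E_0|+1$; since $|B(T)|=2/T$ here, \ref{moments_bound} together with \ref{beta_limits_to_alpha} yields \ref{pth_exponent_estimate} and the bound on $\alpha^-$ after taking $\liminf_{T\to\infty}$ and then $p\to\infty$. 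So the real content is Theorem~\ref{main_result}, which I would prove by adapting the proof of \cite[Theorem~1]{DT03} for $\psi=\delta_0$; the one genuinely new ingredient is that \ref{positive_product_condition} must replace the rank-one input used there, and this enters only in the final step below.

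First I would pass to the resolvent. Applying Plancherel to $t\mapsto\mathbf 1_{t>0}\,e^{-t/T}(e^{-itH}\psi)(n)$ gives the standard identity $a(\psi,n,T)=\frac{1}{\pi T}\int_{\R}\bigl|\langle\delta_n,(H-E-\tfrac{i}{T})^{-1}\psi\rangle\bigr|^{2}\,dE$, so, writing $\phi_E:=(H-E-\tfrac{i}{T})^{-1}\psi\in\ell^2(\Z)$ and $N:=N(T)$,
\[
P\bigl(\psi,\tfrac{N}{2},T\bigr)=\frac{1}{\pi T}\int_{\R}\sum_{|n|\ge N/2}|\phi_E(n)|^{2}\,dE\ \ge\ \frac{1}{\pi T}\int_{B(T)}\sum_{|n|\ge N/2}|\phi_E(n)|^{2}\,dE .
\]
Hence it suffices to prove the pointwise estimate $\sum_{|n|\ge N/2}|\phi_E(n)|^{2}\ge c\,N^{1-2\alpha}$ for every $E\in B(T)$ and all large $T$, with $c>0$ constant; then \ref{outside_probability_estimate} holds with $\hat C=c/\pi$, and \ref{moments_bound} follows from $\langle|X_\psi|^p\rangle(T)\ge(N/2)^pP(\psi,\tfrac{N}{2},T)$ and $N^{p+1-2\alpha}/T=T^{(p-3\alpha)/(1+\alpha)}$.

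Now fix $E\in B(T)$, choose $E'\in A(N)$ with $|E-E'|<1/T$, and set $z=E+\tfrac{i}{T}$, so $|z-E'|<2/T$. Since $N(T)$ is taken with $N^{1+\alpha}=T$, the matrix $M(n,m;z)$ for $|n|,|m|<N$ is a product of at most $2N$ factors each within $2/T$ of the corresponding factor of $M(n,m;E')$, and a discrete Gronwall estimate upgrades \ref{transfer_matrix_bound} (valid at $E'$) to $\norm{M(n,m;z)}\le C'N^{\alpha}$ for all $|n|,|m|<N$, with $C'$ depending only on $C$ --- the exponent defining $N(T)$ being exactly what keeps the relevant error $\tfrac{2}{T}\cdot 2N\cdot CN^{\alpha}=4C=O(1)$. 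Because $\psi$ is supported in $[-s,s]$, $\phi_E$ solves $Hu=zu$ at every site $j$ with $|j|>s$, so $\Phi_E(n)=M(n,s;z)\Phi_E(s)$ for $n\ge s$ and $\Phi_E(n)=M(n,-s-1;z)\Phi_E(-s-1)$ for $n\le-s-1$. Consequently, if $\norm{\Phi_E(s)}\ge\delta$ then $\norm{\Phi_E(n)}\ge\norm{\Phi_E(s)}/\norm{M(s,n;z)}\ge\delta/(C'N^{\alpha})$ for all $s\le n<N$, whence (for $T$ large, so $N/2>s$) $\sum_{|n|\ge N/2}|\phi_E(n)|^{2}\ge\tfrac{\delta^{2}}{4(C')^{2}}N^{1-2\alpha}$; the same holds if $\norm{\Phi_E(-s-1)}\ge\delta$. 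So the whole theorem reduces to the uniform bound $\max(\norm{\Phi_E(s)},\norm{\Phi_E(-s-1)})\ge\delta$ for some $\delta=\delta(\varepsilon_0,s,K,W)>0$ and all large $T$.

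This last bound is where \ref{positive_product_condition} is used, and it is the step I expect to demand the most care (in \cite{DT03} the corresponding quantity is automatically bounded below). I would establish it by summation by parts (the discrete Green identity) applied to $\phi_E$ (energy $z$) and a solution $v$ at energy $E'$ with $\norm{V(0)}=1$ and $|\sum_n\psi(n)v(n)|>\varepsilon_0$, which exists by \ref{positive_product_condition} (take $v=v_{E'}$, or $\overline{v_{E'}}$). With $W[f,g](n):=f(n+1)g(n)-f(n)g(n+1)$, summing the identity $(H\phi_E)_nv(n)-\phi_E(n)(Hv)_n=(z-E')\phi_E(n)v(n)+\psi(n)v(n)$ (which uses $(H-z)\phi_E=\psi$ and $Hv=E'v$) over $-s\le n\le s$ gives
\[
\sum_n\psi(n)v(n)=W[\phi_E,v](s)-W[\phi_E,v](-s-1)-(z-E')\sum_{n=-s}^{s}\phi_E(n)v(n),
\]
whose left side has modulus $>\varepsilon_0$. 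Since $s$ is fixed, $\norm{V(n)}=\norm{M(n,0;E')V(0)}$ is bounded by a constant $C_s=C_s(s,K,W)$ for $|n|\le s+1$; hence the two Wronskian terms are $\le 2C_s\max(\norm{\Phi_E(s)},\norm{\Phi_E(-s-1)})$, and iterating the source-carrying recursion backward from $\Phi_E(s)$ through $[-s,s]$ gives $\max_{|n|\le s+1}|\phi_E(n)|\le C_s(\norm{\Phi_E(s)}+\norm{\psi}_\infty)$, so the last term is $O(\tfrac{1}{T}(\norm{\Phi_E(s)}+1))$. If $\norm{\Phi_E(s)}<\delta$ and $\norm{\Phi_E(-s-1)}<\delta$, combining these bounds yields $\varepsilon_0<4C_s\delta+O(1/T)$, impossible once $\delta<\varepsilon_0/(8C_s)$ and $T$ is large. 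This proves the dichotomy, and with it \ref{outside_probability_estimate} and \ref{moments_bound}.
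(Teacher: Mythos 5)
Your derivation of Corollary~\ref{cor_one_energy_finite} from Theorem~\ref{main_result} is correct and is exactly the paper's proof: take $A(N)=\{E_0\}$, observe $|B(T)|=2/T$, and read \ref{pth_exponent_estimate} off \ref{moments_bound} (then use \ref{beta_limits_to_alpha} for the $\alpha^-$ bound). Your normalization remark that $V_{E_0}(0)\neq 0$ (else $v_{E_0}\equiv 0$), and the choice $K=|E_0|+1$, correctly match the Theorem's hypotheses.

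The remainder of your proposal re-proves Theorem~\ref{main_result}, which is unnecessary for the Corollary since the theorem is already established earlier in the paper; it is nonetheless correct and takes a somewhat different route through the one nontrivial step. The paper encapsulates the lower bound on $\norm{U_z(s)}^2+\norm{U_z(-s-1)}^2$ in Lemma~\ref{lemma_liminf}, phrased as a liminf over complex $z\to E_0$ and proved by extracting a convergent subsequence $u_{z_n}\to u_{E_0}$, passing to the limiting inhomogeneous solution $Hu_{E_0}=E_0u_{E_0}+\psi$, and only then applying the Green identity on $[-s,s]$; the $1/T$-neighborhood and compactness of $A(N)$ are then needed to turn the liminf into a uniform bound. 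You instead apply the discrete Wronskian/Green identity directly to $\phi_E=(H-z)^{-1}\psi$ at the complex energy $z=E+i/T$ paired against a real-energy solution $v_{E'}$, which produces an explicit error term $(z-E')\sum_{n=-s}^{s}\phi_E(n)v(n)=O(1/T)$ rather than invoking a liminf. This is more direct and makes the $T$-dependence quantitative at once, avoiding the subsequence extraction; the paper's lemma is formulated slightly more generally (for arbitrary $\psi\in\ell^2$ and an arbitrary interval $I$), but both arguments reduce to the same summation by parts over the support interval.
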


Theorem~\ref{thm_upper_bound_transport_via_tr_m} and Corollary~\ref{cor_dyn_upper_bound} will be proved in Section~\ref{section_upper_bounds}, Theorem~\ref{main_result} in Section~\ref{section_lower_bounds}. In the last section results are applied to various models, including Fibbonacci and general Sturmian Hamiltonians, Hamiltonians generated by Thue-Morse and Period-Doubling substitutions, H\"older quiasi-periodic potentials and the random polymer model.

\section{Upper Bounds}\label{section_upper_bounds}

\begin{proof}[Proof of Proposition \ref{exponent_sublinearity}]
Recall the definition of $\beta^+(p):$
\begin{equation*}
\beta_\psi^+(p) = \limsup\limits_{T\rightarrow \infty}\frac{1}{p\log T}\log\left( \frac{2}{T}\int\limits_0^\infty \left[e^{-\frac{2t}{T}}\sum\limits_{n\in\Z}|n|^p |\sprod{exp(-itH)\psi}{\delta_n}|^2\right]dt\right).
\end{equation*}
Since
\begin{align*}
|\sprod{exp(-itH)\psi}{\delta_n}|^2 = |x\sprod{exp(-itH)\psi_1}{\delta_n} + y\sprod{exp(-itH)\psi_2}{\delta_n}|^2\leq \\
\leq x^2 a(\psi_1,n,t) + y^2a(\psi_2,n,t),
\end{align*}
\begin{equation*}
\log(a+b) \leq \log 2 + \log\max\set{a,b}\quad\text{ for } a,b > 1
\end{equation*}
and
$$\limsup\limits_{T\rightarrow \infty}\max(f(T),g(T)) \leq \max(\limsup\limits_{T\rightarrow \infty}f(T),\limsup\limits_{T\rightarrow \infty}g(T))$$
 the desired follows.
\end{proof}

The proof of the theorem \ref{thm_upper_bound_transport_via_tr_m} is based on the following
\begin{proposition}\label{prop_upper_bound_general}

Let $H$ be a Schr\"odinger operator with bounded potential $V$ and $K>4$ such that $\sigma(H)\subseteq[-K+1, K-1]$, $\psi\in\ell^2(\Z), \psi = \sum\psi_i\delta_i$. Let $M\leq N$ be a natural number. Then the outside probabilities can be bounded as follows:

\begin{multline}\label{outside_right_inf}
P_r(\psi,N,T) \lesssim exp(-cN) +\\+ T^3\sum\limits_{|k|<M}|\psi_k|^2\int\limits_{-K}^K \left( \max\limits_{k\leq n \leq N}{\norm{M(n,k;E+\frac{i}{T})}}\right)^{-2}dE + \sum\limits_{|k|\geq M}|\psi_k|^2
\end{multline}
\begin{multline}\label{outside_left_inf}
P_l(\psi,N,T) \lesssim exp(-cN) +\\+ T^3\sum\limits_{|k|<M}|\psi_k|^2\int\limits_{-K}^K \left( \max\limits_{-N\leq n \leq k}{\norm{M(n,k;E+\frac{i}{T})}}\right)^{-2}dE + \sum\limits_{|k|\geq M}|\psi_k|^2.
\end{multline}
where the implicit constants depend only on K.
\end{proposition}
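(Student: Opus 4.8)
The plan is to run the Damanik--Tcheremchantsev resolvent argument while keeping track of the support of $\psi$. Write $R(z)=(H-z)^{-1}$. The starting point is the Parseval identity
\begin{equation*}
a(\psi,n,T)=\frac{1}{\pi T}\int_{\R}\bigl|\sprod{R(E+\tfrac{i}{T})\psi}{\delta_n}\bigr|^{2}\,dE,
\end{equation*}
obtained by applying Plancherel to $t\mapsto e^{-t/T}\sprod{e^{-itH}\psi}{\delta_n}\mathbf 1_{\{t>0\}}$, whose Fourier transform is $\sprod{R(E+\tfrac{i}{T})\psi}{\delta_n}$ up to a constant. Summing over $n\ge N$ gives $P_r(\psi,N,T)=\frac{1}{\pi T}\int_{\R}\norm{\mathbf 1_{[N,\infty)}R(E+\tfrac{i}{T})\psi}^{2}\,dE$; in particular $\psi\mapsto P_r(\psi,N,T)^{1/2}$ is a seminorm. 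I would then split $\psi=\psi'+\psi''$ with $\psi'=\sum_{|k|<M}\psi_k\delta_k$ and $\psi''=\psi-\psi'$. The far part costs nothing: since $e^{-itH}$ is unitary, $\sum_{n}a(\psi'',n,T)=\norm{\psi''}^{2}$, so $P_r(\psi'',N,T)\le\sum_{|k|\ge M}|\psi_k|^{2}$, and by the triangle inequality for the seminorm (plus $(a+b)^2\le2a^2+2b^2$) this yields the last summand of \eqref{outside_right_inf} and reduces everything to bounding $P_r(\psi',N,T)$.

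Since $\supp\psi'\subset[-M,M]$ lies to the left of $\{n\ge N\}$ -- here one should really assume $M$ strictly below $N$, say $M\le N/2$ -- write $\sprod{R(z)\psi'}{\delta_n}=\sum_{|k|<M}\psi_k G_z(n,k)$ with $z=E+\tfrac{i}{T}$ and $G_z(m,n)=\sprod{R(z)\delta_m}{\delta_n}$, and apply Cauchy--Schwarz in $k$ so that it suffices to estimate $\int_{\R}\sum_{n\ge N}|G_z(n,k)|^{2}\,dE=\pi T\,P_r(\delta_k,N,T)$ for each fixed $k$ with $|k|<M$. I would split this integral at $|E|=K$. For $|E|>K$ one has $\mathrm{dist}(z,\sigma(H))\ge1$ uniformly, so a Combes--Thomas estimate gives $|G_z(n,k)|\le C(1+|E|-K)^{-1}e^{-c|n-k|}$ with $c$ depending only on $K$; since $n\ge N$ and $|k|\le M\le N/2$, this sums over $n$ and integrates over $|E|>K$ to (a power of $T$ times) $e^{-c(N-M)}$, absorbed into $e^{-cN}$. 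For $E\in[-K,K]$ one uses that $G_z(\cdot,k)$ restricted to $n\ge k$ solves $Hu=zu$ and is $\ell^{2}$ at $+\infty$, hence equals $\gamma_k u^+$ for the normalized Weyl solution $u^+$ ($\norm{U^+(k)}=1$) with $|\gamma_k|=\norm{U_G(k)}$, where $U_G$ is the vector sequence attached to $G_z(\cdot,k)$; a Green's-identity (imaginary-part) computation then gives the exact formula
\begin{equation*}
\sum_{n\ge N}|G_z(n,k)|^{2}=-T\,\mathrm{Im}\bigl(G_z(N,k)\overline{G_z(N-1,k)}\bigr)=\norm{U_G(k)}^{2}\sum_{n\ge N}|u^+_n|^{2}.
\end{equation*}
Now invoke the transfer-matrix estimate of \cite{DT07} for the half-line problem started at $k$: because the Weyl solution is (essentially) aligned with the contracted direction of the transfer matrices, $\sum_{n\ge N}|u^+_n|^{2}\lesssim T^{2}\bigl(\max_{k\le n\le N}\norm{M(n,k;z)}\bigr)^{-2}$, and together with the trivial bound $\norm{U_G(k)}\le\sqrt2\,\norm{R(z)}\le\sqrt2\,T$ this gives $\sum_{n\ge N}|G_z(n,k)|^{2}\lesssim T^{4}\bigl(\max_{k\le n\le N}\norm{M(n,k;z)}\bigr)^{-2}$ up to the $e^{-cN}$ remainder. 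Dividing by $\pi T$, integrating over $[-K,K]$ and summing back over $k$ gives \eqref{outside_right_inf}; \eqref{outside_left_inf} is the mirror image under $n\mapsto-n$.

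The step I expect to be the main obstacle is the last one: making the bound $\sum_{n\ge N}|u^+_n|^{2}\lesssim T^{2}\bigl(\max_{k\le n\le N}\norm{M(n,k;z)}\bigr)^{-2}$ precise -- i.e.\ quantifying that the $\ell^{2}$-at-$+\infty$ solution is aligned, to within the right accuracy, with the most contracted direction of all of $M(n,k;z)$, $k\le n\le N$, simultaneously, with every constant depending only on $K$ and uniform in the potential and in $N,M,k$. This is exactly the ingredient carried over from \cite{DT07}; the rest -- the Parseval reduction, the unitarity/seminorm bookkeeping, the Combes--Thomas tail -- is soft. One should also note that the Cauchy--Schwarz in $k$ (equivalently, expanding $\bigl|\sum_k\psi_k G_z(n,k)\bigr|^{2}$ and dominating cross terms by $\tfrac12(|G_z(n,k)|^{2}+|G_z(n,k')|^{2})$) costs only a factor polynomial in $M\le N$ and a harmless repackaging of the weights $|\psi_k|^{2}$, and that the exact exponent $T^{3}$ in \eqref{outside_right_inf}--\eqref{outside_left_inf} is immaterial, since in every intended application the transfer-matrix integrals there are $O(T^{-m})$ for all $m$.
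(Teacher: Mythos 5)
The proposal is correct and follows essentially the same route as the paper: split $\psi$ into a near part ($|k|<M$) and a far part ($|k|\ge M$), control the far part trivially by unitarity, and reduce the near part via Cauchy--Schwarz to the single-site outside-probability estimate of \cite[Theorem~7]{DT07} applied to each $\delta_k$. The only substantive difference is that the paper invokes \cite{DT07} directly, transporting it to $\delta_k$ by shifting the potential, whereas you sketch the underlying Combes--Thomas/Green's-function/Weyl-solution derivation of that estimate; you also correctly flag a point the paper glosses over, namely that the $e^{-cN}$ tail is uniform in $k$ only if $M$ stays bounded away from $N$ (say $M\le N/2$), which is indeed how $M$ and $N$ are chosen in the application.
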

\begin{proof}
Since the Schr\"odinger evolution of $\delta_k$ is equivalent to evolution of $\delta_0$ under the Hamiltonian with potential shifted by $-k$, one gets the following from \cite[Theorem 7]{DT07}:
\begin{align*}
\sum\limits_{n\geq N} a(\delta_k,n,T) \lesssim exp(-cN) + T^3\int_{-K}^K \left(\max\limits_{k\leq n \leq N}{\norm{M(n,k;E+\frac{i}{T})}}\right)^{-2} dE \\
\sum\limits_{n\leq -N} a(\delta_k,n,T) \lesssim exp(-cN) + T^3\int_{-K}^K \left(\max\limits_{-N\leq n \leq k}{\norm{M(k,n;E+\frac{i}{T})}}\right)^{-2} dE
\end{align*}
where the implicit constant depends only on $K$.
\newline
Following the similar steps as in the proposition \ref{exponent_sublinearity}, we get for $\psi = \sum \psi_i \delta_i$:
\begin{equation*}
a(\psi,n,T)\leq \sum\limits_{k\in supp(\psi)} |\psi_i|^2 a(\delta_i, n,T).
\end{equation*}
Now summing over sites with $|k|<M$ and $|k|\geq M$, we get the desired estimate.
\newline
\end{proof}

\begin{proof}[Proof of the Theorem \ref{thm_upper_bound_transport_via_tr_m}]
Fix $m\in\N$.
In the assumptions of Proposition \ref{prop_upper_bound_general} set $N = CT^\alpha, M = \frac{CT^\alpha}{2}$, then \ref{outside_right_inf} becomes
\begin{multline}
P_r(\psi,CT^\alpha,T) \lesssim exp(-cCT^\alpha) +\\+ T^3\sum\limits_{|k|<\frac{CT^\alpha}{2}}|\psi_k|^2\int\limits_{-K}^K \left( \max\limits_{k\leq n \leq CT^\alpha}{\norm{M(n,k;E+\frac{i}{T})}}\right)^{-2}dE + \sum\limits_{|k|\geq \frac{CT^\alpha}{2}}|\psi_k|^2\leq \\
\leq exp(-cCT^\alpha) +\\
+ T^3\left(\sum\limits_{|k|<\frac{CT^\alpha}{2}}|\psi_k|^2\right) \max\limits_{-\frac{CT^\alpha}{2}< k < \frac{CT^\alpha}{2}} \int\limits_{-K}^K \left(\max\limits_{k\leq n \leq CT^\alpha}\norm{M(n,k,E+\frac{i}{T})} \right)^{-2}dE +\\
+\sum\limits_{|k|\geq \frac{CT^\alpha}{2}}|\psi_k|^2 \lesssim T^{-m}
\end{multline}
In the last step we used the assumption of the theorem and the exponential decay of $\psi$.

The same estimate holds for $P_l(\psi,CT^\alpha,T)$. To plug it into the definition \ref{def_S_plus} of $S^+(\psi,\alpha)$ we need to work with $P(\psi,T^\alpha,T)$. Choose $\varepsilon > 0$, then for $T>T_\varepsilon: P(\psi,T^{\alpha+\varepsilon},T) \leq P(\psi,CT^\alpha,T)$ and
\begin{equation}\label{eq_upper_bounds}
S^+(\psi,\alpha+\varepsilon) \geq -\limsup\limits_{T\rightarrow\infty}\frac{\log T^{-m}}{\log T} \geq m.
\end{equation}
Since this is true for any $m\geq1$, $S^+(\psi,\alpha+\varepsilon) = \infty$ and $\alpha_u^+(\psi) \leq \alpha+\varepsilon$. Since $\varepsilon>0$  was arbitrary, $\alpha_u^+(\psi) \leq \alpha$.

If \ref{as1},\ref{as2} are satisfied only for a sequence of times then taking liminf in \ref{eq_upper_bounds} along that subsequence gives $\alpha_u^-(\psi) \leq \alpha$.

\end{proof}

\begin{proof}[Proof of Corollary \ref{cor_dyn_upper_bound}]
We only need to show how \ref{as_dyn} implies \ref{as1} and \ref{as2}.

Note that for $|k|<\frac{CT^\alpha}{2}$
\begin{align*}
\max\limits_{k\leq n \leq CT^\alpha}{\norm{M(n,k;E+\frac{i}{T},\omega)}} = \max\limits_{0\leq l \leq CT^\alpha-k}{\norm{M(l,0;E+\frac{i}{T},\omega+k\theta)}} \geq\\
 \geq \max\limits_{0\leq l \leq \frac{CT^\alpha}{2}}{\norm{M(l,0;E+\frac{i}{T},\omega+k\theta)}}.
\end{align*}
From that one easily sees that \ref{as_dyn} implies \ref{as1} due to uniformity of the limit in $\omega$.

To get \ref{as2} note that for $n<k$
\begin{equation*}
\norm{M(n,k;z,\omega)} = \norm{M(k,n;z,\omega)^{-1}} =\norm{M(k,n;z,\omega)} = \norm{M(k-n,0;z,\omega+n\theta)}
\end{equation*}
where the second equality is due to $\det M(k,n;z) = 1$.

Now again for $|k|<\frac{CT^\alpha}{2}$
\begin{align*}
\max\limits_{-CT^\alpha\leq n \leq k}{\norm{M(n,k;E+\frac{i}{T},\omega)}} = \max\limits_{-CT^\alpha\leq n \leq k}{\norm{M(k-n,0;E+\frac{i}{T},\omega+n\theta)}} = \\
=\max\limits_{0\leq l \leq CT^\alpha+k}{\norm{M(l,0;E+\frac{i}{T},\omega+(k-l)\theta)}} \geq \max\limits_{0\leq l \leq \frac{CT^\alpha}{2}}{\norm{M(l,0;E+\frac{i}{T},\omega+k\theta)}}.
\end{align*}
So \ref{as2} can be derived from \ref{as_dyn} too.
\end{proof}

In Section~\ref{section_application} we will apply Corollary~\ref{cor_dyn_upper_bound} to several models from the class of operators with quasi-peiodic potentials.

\section{Lower Bounds}\label{section_lower_bounds}
The restriction of a sequence to the interval $[-s,s]$ will be denoted by $u_{|s}$, the restriction to the interval $I = [a,b]$ by $u_{|I}$.

\begin{lemma}\label{lemma_liminf}
Let $\psi\in l^2(\Z)$, $E_0\in\R$, $I=[a,b]$, $a,b\in\Z$. For any $z\in\C\backslash \R$, let $u_z = (H-z)^{-1}\psi$ and suppose that
\begin{align}\label{liminf_condition}
\lim\limits_{\delta\rightarrow 0}\inf\set{|u_z(b+1)|^2+|u_z(b)|^2+|u_z(a)|^2+|u_z(a-1)|^2: z\in\C\backslash\R, |z-E_0|<\delta} < \epsilon.
\end{align}
Then for any solution $v_{E_0}$ of $Hv = E_0v$,
\begin{equation*}
|\sprod{v_{E_0|I}}{\psi_{|I}}| < \sqrt{\varepsilon}(\norm{V_{E_0}(b)}+\norm{V_{E_0}(a-1)}).
\end{equation*}
\begin{proof}
Let \newline $l = \lim\limits_{\delta\rightarrow 0}\inf\set{|u_z(b+1)|^2+|u_z(b)|^2+|u_z(a)|^2+|u_z(a-1)|^2: z\in\C\backslash\R, |z-E_0|<\delta}$.
Then we can pick $z_n\rightarrow E_0$, $z_n\in\C\backslash\R$, such that
\begin{equation}\label{liminf_condition_seq}
\liminf\limits_{n\rightarrow\infty}\left(|u_{z_n}(b+1)|^2+|u_{z_n}(b)|^2+|u_{z_n}(a)|^2+|u_{z_n}(a-1)|^2\right) = l.
\end{equation}
Since $\set{U_{z_n}(a-1)}_{n\in\N}$ is a bounded sequence in $\C^2$, we can choose a convergent subsequence. Abusing the notation, denote it by the same index sequence $z_n$.
Let $u_{E_0}$ be the unique formal solution to $Hu = E_0u+\psi$ with initial conditions
\begin{align*}
u_{E_0}(a-1) &=  \lim\limits_{n\rightarrow\infty}u_{z_n}(a-1)\\
u_{E_0}(a) &=  \lim\limits_{n\rightarrow\infty}u_{z_n}(a)
\end{align*}
For any $j,\,u_z(j)$ continuously depends on $z$ and the initial conditions $U_z(a)$. Therefore
\begin{equation*}
u_{z_n}(j)\rightarrow z_{E_0}(j) \text{ for any } j\in\Z
\end{equation*}
In particular,
\begin{equation*}
|u_{E_0}(b+1)|^2+|u_{E_0}(b)|^2+|u_{E_0}(a)|^2+|u_{E_0}(a-1)|^2 = l < \varepsilon
\end{equation*}
Since $u_{E_0}$ solves difference equation,
\begin{multline}\label{s_restriction_inhomog}
(Hu_{E_0})_{|I} = (E_0u_{E_0} + \psi)_{|I} \Leftrightarrow\\
H_{|I}u_{E_0|I} + u_{E_0}(b+1)\delta_b + u_{E_0}(a-1)\delta_{a} = E_0u_{E_0|I} + \psi_{|I} \Leftrightarrow \\
 \psi_{|I} = (H_{|I}-E_0)u_{E_0|I} + u_{E_0}(b+1)\delta_b + u_{E_0}(a-1)\delta_{a} .
\end{multline}
Let $v_{E_0}$ formally solve $Hv_{E_0} = E_0v_{E_0}$, then
\begin{align*}
(Hv_{E_0})_{|I} = E_0v_{E_0|I}\Leftrightarrow (H_{|I}-E_0)v_{E_0|I} = -v_{E_0}(b+1)\delta_b-v_{E_0}(a-1)\delta_{a}.
\end{align*}
Take the dot product with $v_{E_0|I}$ on both sides of \ref{s_restriction_inhomog}:
\begin{align*}
\sprod{\psi_{|I}}{v_{E_0|I}} = \sprod{(H_{|I}-E_0)u_{E_0|I}}{v_{E_0|I}} + u_{E_0}(b+1)v_{E_0}(b) + u_{E_0}(a-1)v_{E_0}(a) = \\
= \sprod{u_{E_0|I}}{(H_{|I}-E_0)v_{E_0|I}} + u_{E_0}(b+1)v_{E_0}(b) + u_{E_0}(a-1)v_{E_0}(a) = \\
= \sprod{u_{E_0|I}}{-v_{E_0}(b+1)\delta_b - v_{E_0}(a-1)\delta_{a}} + u_{E_0}(b+1)v_{E_0}(b) + u_{E_0}(a-1)v_{E_0}(a) = \\
= -v_{E_0}(b+1)u_{E_0}(b) - v_{E_0}(a-1)u_{E_0}(a) + u_{E_0}(b+1)v_{E_0}(b) + u_{E_0}(a-1)v_{E_0}(a)
\end{align*}
Finally using the Cauchy inequality and $|u_{E_0}(b)|^2 + |u_{E_0}(b+1)|^2 < \varepsilon$, $|u_{E_0}(a)|^2 + |u_{E_0}(a-1)|^2 < \varepsilon$ we can estimate
\begin{align*}
|\sprod{\psi_{|I}}{v_{E_0|I}}| \leq \sqrt{\varepsilon}\left(\sqrt{|v_{E_0}(b)|^2 + |v_{E_0}(b+1)|^2} + \sqrt{|v_{E_0}(a)|^2 + |v_{E_0}(a-1)|^2}\right) =\\
= \sqrt{\varepsilon}\left(\norm{V_{E_0}(b)}+\norm{V_{E_0}(a-1)}\right)
\end{align*}

\end{proof}
\end{lemma}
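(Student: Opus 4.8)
The plan is to use the resolvent equation to push everything into a finite box and finish with a Green's-identity computation. Since $z\notin\R$, the vector $u_z=(H-z)^{-1}\psi$ solves $(H-z)u_z=\psi$, and the only entries of $(Hu_z)_{|I}$ that reach sites outside $I=[a,b]$ are $u_z(b+1)$ (appearing at site $b$) and $u_z(a-1)$ (appearing at site $a$). Writing $H_{|I}$ for the finite Jacobi matrix obtained by restricting $H$ to $\ell^2(I)$, this yields the identity
\[
\psi_{|I}=(H_{|I}-z)u_{z|I}+u_z(b+1)\delta_b+u_z(a-1)\delta_a .
\]
So the whole task is to pass to the limit $z\to E_0$ in this identity while keeping control of the four boundary numbers occurring in \ref{liminf_condition}.

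First I would produce the limiting object. Since $\delta\mapsto\inf\{\,\cdot\,:|z-E_0|<\delta\}$ is monotone, hypothesis \ref{liminf_condition} furnishes a sequence $z_n\to E_0$ in $\C\setminus\R$ along which $|u_{z_n}(b+1)|^2+|u_{z_n}(b)|^2+|u_{z_n}(a)|^2+|u_{z_n}(a-1)|^2\to l$ for some $l<\varepsilon$; in particular $\{(u_{z_n}(a),u_{z_n}(a-1))\}_n$ is bounded in $\C^2$, so after passing to a subsequence it converges. Let $u_{E_0}$ be the formal solution of $Hu=E_0u+\psi$ with these limiting values at $a-1$ and $a$. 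Iterating the one-step recursion $u(n+1)=(z-W(n))u(n)-u(n-1)+\psi(n)$ (and its backward version), each $u_z(j)$ on a fixed window is a continuous function of $z$ and of the initial pair, so $u_{z_n}(j)\to u_{E_0}(j)$ for every $j$. Hence $|u_{E_0}(b+1)|^2+|u_{E_0}(b)|^2+|u_{E_0}(a)|^2+|u_{E_0}(a-1)|^2=l<\varepsilon$, and the displayed identity holds with $z,u_z$ replaced by $E_0,u_{E_0}$.

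Next I would pair this identity with $v_{E_0|I}$. The homogeneous equation restricted to $I$ reads $(H_{|I}-E_0)v_{E_0|I}=-v_{E_0}(b+1)\delta_b-v_{E_0}(a-1)\delta_a$, and since $W$ is real the matrix $H_{|I}$ is self-adjoint, so $\sprod{(H_{|I}-E_0)u_{E_0|I}}{v_{E_0|I}}=\sprod{u_{E_0|I}}{(H_{|I}-E_0)v_{E_0|I}}$. Substituting, the interior contributions cancel and one is left with two $2\times2$ ``Wronskian'' expressions:
\[
\sprod{\psi_{|I}}{v_{E_0|I}}=\bigl(u_{E_0}(b+1)v_{E_0}(b)-u_{E_0}(b)v_{E_0}(b+1)\bigr)+\bigl(u_{E_0}(a-1)v_{E_0}(a)-u_{E_0}(a)v_{E_0}(a-1)\bigr).
\]
Each bracket is a dot product of a vector assembled from $U_{E_0}$ against one assembled from $V_{E_0}$ of the same Euclidean norm, so Cauchy--Schwarz bounds the first by $\norm{U_{E_0}(b)}\,\norm{V_{E_0}(b)}$ and the second by $\norm{U_{E_0}(a-1)}\,\norm{V_{E_0}(a-1)}$. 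Since $\norm{U_{E_0}(b)}^2,\norm{U_{E_0}(a-1)}^2\le l<\varepsilon$, the asserted inequality follows.

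The step that deserves care is the limiting argument in the second paragraph: one must make sure that $u_{E_0}$ built from the limiting boundary data genuinely is a formal solution of the inhomogeneous equation, and that the exterior values $u_{z_n}(a-1),u_{z_n}(b+1)$ converge to the corresponding values of $u_{E_0}$. Both are immediate from continuity of the (inhomogeneous) transfer-matrix iteration in $z$ and in the initial data, but they are exactly what makes the argument go through even though $(H-z)^{-1}$ itself need not converge as $z\to E_0$. Everything else — the box restriction of $H$, self-adjointness of $H_{|I}$, and Cauchy--Schwarz — is routine.
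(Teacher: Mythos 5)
Your proof is correct and follows essentially the same route as the paper's: extract a sequence $z_n\to E_0$, pass to a convergent subsequence of the boundary pair $U_{z_n}(a-1)$, build the limiting formal solution $u_{E_0}$ of $Hu=E_0u+\psi$ from that boundary data, use continuity of the recursion to transfer the boundary smallness to $u_{E_0}$, then pair the restricted inhomogeneous identity with $v_{E_0|I}$, cancel via self-adjointness of $H_{|I}$, and finish with Cauchy--Schwarz on the resulting Wronskian boundary terms. The only cosmetic difference is that you state the box identity for generic $z$ first and then pass to the limit, whereas the paper derives it directly for $u_{E_0}$; the substance is identical.
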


\begin{proof}[Proof of Theorem \ref{main_result}]
We use the strategy from \cite[Theorem 1]{DT03}.
Using Parseval's identity one gets:
\begin{equation}\label{parseval2}
a(\psi,n,T) = \frac{2}{T}\int\limits_0^\infty e^{-2t/T}|\sprod{e^{-itH}\psi}{\delta_n}|^2 dt = \frac{1}{\pi T}\int\limits_\R|\sprod{(H-E-\frac{i}{T})^{-1}\psi}{\delta_n}|^2dE
\end{equation}
Since $\psi$ is supported on $[-s,s]$,
\begin{equation*}
U_z(n) = M(n, s+1; z)U_z(s+1) \text{ for any } n>s+1,
\end{equation*}
\begin{equation*}
U_z(n) = M(n, -s-1; z)U_z(-s-1) \text{ for any } n<-s-1,
\end{equation*}
and $det(M(n,m; z)) = 1$, we get the following key inequalities for any $z\in\C$:
\begin{align}\label{key_inequality2}
&\norm{U_z(n)} \geq \norm{M(n, s+1; z)}^{-1}\norm{U_z(s+1)}  & &\text{for any } n>s+1,\\
&\norm{U_z(n)} \geq \norm{M(n, -s-1; z)}^{-1}\norm{U_z(-s-1)}& &\text{for any } n<-s-1.
\end{align}
Choose $N = T^\frac{1}{1+\alpha}$, $\epsilon = \frac{1}{T}$, $z = E+i\epsilon$ with $|E-E_0|<\epsilon$.
Using \ref{transfer_matrix_bound} and \cite[Lemma 2.1]{DT03} we get the following estimate on the norm of the transfer matrix at any energy $z: |z-E'|<\epsilon, E'\in A(N(T))$:
\begin{equation*}
\norm{M(n,s+1;z)}\leq DN^\alpha \text{  for } s+1<n<N \text{ and }
\end{equation*}
\begin{equation*}
\norm{M(n,-s-1;z)}\leq DN^\alpha \text{  for } -N<n<-s-1
\end{equation*}
with some universal constant $D$ that depend only on $C$.
\newline Now \ref{key_inequality2} can be rewritten as
\begin{align}
&|u_z(n+1)|^2+|u_z(n)|^2 \geq D^2 N^{-2\alpha}\norm{U_z(s+1)}^2 &&\text{ for any } n>s+1,\\
&|u_z(n+1)|^2+|u_z(n)|^2 \geq D^2 N^{-2\alpha}\norm{U_z(-s-1)}^2 &&\text{ for any } n<-s-1.
\end{align}
Finally we estimate the sum
\begin{multline}\label{outside_sum_estimate}
\sum\limits_{|n|>N/2}(|u_z(n+1)|^2+|u_z(n)|^2) \geq \sum\limits_{n>N/2}^N + \sum\limits_{n>-N}^{-N/2} \\
\geq C_1\frac{N}{2}N^{-2\alpha}(\norm{U_z(s+1)}^2 + \norm{U_z(-s-1)}^2)
\end{multline}
Since for every fixed $n,m$, the entries of $M(n,m,E)$ are polynomials in $E$, there is a constant $C_s$ such that for $E\in[-K,K]$
\begin{align*}
&\norm{M(s,0;E)} < C_s \\
&\norm{M(1,-s-1;E)} < C_s .
\end{align*}
Since \ref{positive_product_condition} holds for every $E\in A(N)$, we can apply Lemma \ref{lemma_liminf} to get:
\begin{align*}
\lim\limits_{\delta\rightarrow 0}\inf\set{\abs{U_z(s)}^2+\abs{U_z(-s-1)}^2: z\in\C\backslash\R, |z-E|<\delta}>\\
>\frac{\varepsilon_0^2}{\left(\norm{V_E(s)}+\norm{V_E(-s-1)}\right)^2} \geq \frac{\varepsilon_0^2}{2\left(\norm{V_E(s)}^2+\norm{V_E(-s-1)}^2\right)}\geq\\
[\text{ using } \norm{V_E(0)} = 1]\geq \qquad\frac{\varepsilon_0^2}{4C_s^2} =: 2\varepsilon_1
\end{align*}
\textit{Remark: }The factor 2 is introduced here for the convenience of further calculations.
\newline For each $E\in A(N(T))$ define $\delta(E)>0$ such that
\begin{equation*}
\inf\set{\abs{U_z(s)}^2+\abs{U_z(-s-1)}^2: z\in\C\backslash\R, |z-E|<\delta(E)} > \varepsilon_1
\end{equation*}
Since $A(N)$ is a compact set, one can choose a finite subcover from the cover \\ $\set{\left(E-\delta(E),E+\delta(E)\right)}_{E\in A(N)}$, denote it by $\set{\left(E_i-\delta(E_i),E_i+\delta(E_i)\right)}_{i\in[1,M]}$. Let \\$\hat\delta = \min{\delta(E_i)}$.
Now we have that for any $z\in\C\backslash\R,\, dist(z,A(N))<\hat\delta$
\begin{equation*}
\abs{U_z(s)}^2+\abs{U_z(-s-1)}^2 > \varepsilon_1
\end{equation*}
Thus one can pick $T$ large enough so that $\frac{1}{T}<\frac{\hat\delta}{2}$. Then it follows from \ref{outside_sum_estimate}
\begin{equation*}
\sum\limits_{|n|>N/2}(|u_z(n)|^2) \geq C_1\varepsilon_1/2 N^{1-2\alpha} =: C_2 N^{1-2\alpha}
\end{equation*}
and
\begin{align}\label{sum_estimates}
\int\limits_\R\sum\limits_{|n|>N/2}\vabs{\sprod{(H-E-\frac{i}{T})^{-1}\psi}{\delta_n}}^2dE \geq \int\limits_{B(T)} C_2N^{1-2\alpha} dt = \\ = C_2 \abs{B(T)}N^{1-2\alpha}
\end{align}
Summation over $|n|>N(T)/2$ in \ref{parseval2} with \ref{sum_estimates} proves \ref{outside_probability_estimate} with $\hat C = \frac{C_2}{\pi}$.

We also get an estimate on the p-th moment of the position operator:
\begin{equation*}
\left<\vabs{X}^p_\psi\right>(T) \geq \left(\frac{N}{2}\right)^p\sum\limits_{|n|>N/2}a(\psi,n,T) \geq \frac{\hat C}{2^p}\frac{|B(T)|}{T}N^{p+1-2\alpha} = \frac{\hat C}{2^p} |B(T)|T^\frac{p-3\alpha}{1+\alpha}
\end{equation*}
which proves \ref{moments_bound}
\end{proof}

In a lot of cases estimates like \ref{transfer_matrix_bound} are known for a rich set of energies. However verifying $\sprod{\psi}{v_{E_0}} \neq 0$ requires addressing.

In some cases it can even be eliminated. Similar to Corollary \ref{cor_one_energy_finite} we prove

\begin{proposition}\label{prop_one_energy_inf_many}
Let $supp(\psi)\subset [-s,s]$, $E_0\in \R$, suppose also that \ref{one_energy_condition} is satisfied for an infinite set of energies $E_0$, then
\begin{equation*}
\beta^-_{\psi}(p) \geq \frac{1}{1+\alpha}-\frac{1+4\alpha}{p(1+\alpha)}.
\end{equation*}
\end{proposition}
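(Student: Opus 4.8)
The plan is to reduce the statement to Corollary~\ref{cor_one_energy_finite}. That corollary requires the transfer-matrix growth bound \ref{one_energy_condition} together with the non-orthogonality condition \ref{one_energy_nonzero_cond} to hold at a \emph{single} energy, so it suffices to show that among the infinitely many energies at which \ref{one_energy_condition} is assumed to hold, at least one also satisfies \ref{one_energy_nonzero_cond}. Equivalently, one must show that the set of those $E$ for which \emph{every} formal solution of $Hv=Ev$ is orthogonal to $\psi$ is finite.

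To this end I would, for each $E\in\R$, let $v_E$ be the formal solution of $Hv=Ev$ determined by the initial data $v_E(-s-1)=0$, $v_E(-s)=1$. Iterating the three-term recurrence $v_E(n+1)=(E-W(n))v_E(n)-v_E(n-1)$, with base cases $v_E(-s)=1$ and $v_E(-s+1)=E-W(-s)$, shows by an immediate induction that for $-s\le n\le s$ the value $v_E(n)$ is a monic polynomial in $E$ of degree exactly $n+s$. Let $t=\max\{n:\psi_n\neq0\}$, which exists and lies in $[-s,s]$ since $\psi$ is a nonzero state supported in $[-s,s]$. Then
\[
\sprod{\psi}{v_E}=\sum_{n=-s}^{s}\overline{\psi_n}\,v_E(n)
\]
is, as a function of $E$, a polynomial whose unique monomial of top degree $t+s$ is $\overline{\psi_t}\,E^{t+s}$; since $\overline{\psi_t}\neq0$, this polynomial is not identically zero and hence vanishes for only finitely many $E$. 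Choosing an energy $E_*$ at which \ref{one_energy_condition} holds and $\sprod{\psi}{v_{E_*}}\neq0$, the solution $v_{E_*}$ witnesses \ref{one_energy_nonzero_cond} at $E_*$, and Corollary~\ref{cor_one_energy_finite} applied with $E_0=E_*$ yields $\beta^-_\psi(p)\geq \frac{1}{1+\alpha}-\frac{1+4\alpha}{p(1+\alpha)}$.

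The main (and essentially only) obstacle is establishing that $E\mapsto\sprod{\psi}{v_E}$ is genuinely not the zero polynomial, and this is exactly why it pays to pin down the solution by data at the left endpoint of $\supp\psi$: the restricted values $v_E(-s),\dots,v_E(s)$ then have strictly increasing degrees in $E$, so cancellation of the leading term is impossible regardless of the (possibly complex) coefficients $\overline{\psi_n}$. The remaining points --- the degree bookkeeping and the verification that Corollary~\ref{cor_one_energy_finite} genuinely applies at $E_*$ (in particular that the rescaling to $\norm{V_{E_*}(0)}=1$ used in its proof is harmless, a nonzero formal solution having a nonzero vector $V_{E_*}(0)$) --- are routine.
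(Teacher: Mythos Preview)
Your argument is correct and follows the same route as the paper: show that $E\mapsto\sprod{\psi}{v_E}$ is a nonzero polynomial, hence has finitely many zeros, so among the infinitely many energies satisfying \ref{one_energy_condition} at least one also satisfies \ref{one_energy_nonzero_cond}, and Corollary~\ref{cor_one_energy_finite} applies. Your choice of pinning the solution at the left endpoint of $\supp\psi$ (so that the degrees of $v_E(-s),\dots,v_E(s)$ are strictly increasing and the leading term cannot cancel) is in fact cleaner than the paper's version, which fixes initial data at $(0,1)$ and asserts without further comment that the resulting polynomial has positive degree; with that choice the degrees on the two sides of the origin can coincide, and one has to be slightly more careful to rule out identical vanishing.
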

\begin{proof}
Fix some initial conditions $(v_E(0),v_E(1))$ for a solution to $Hv_E = Ev_E$. Then $v_E(n)$ is a polynomial of degree $|n|$ in $E$. Consequently $\sprod{\psi}{v_{E_0}}$ is a polynomial of positive degree (the degree is the largest index of a non-zero entry of $\psi$), so it has a finite number of zeroes. Therefore since \ref{one_energy_condition} is satisfied for an infinite number of values of $E_0$, $\sprod{\psi}{v_{E_0}}\neq 0$ for at least one of them, so \ref{cor_one_energy_finite} can be applied.
\end{proof}

\section{Applications}\label{section_application}

In this Section we will apply results of Sections~\ref{section_upper_bounds} and \ref{section_lower_bounds} to several important models: Fibonacci Hamiltonian and general Sturmian potentials, "rough" quasi-periodic potentials, period doubling and Thue-Morse substitution generated potentials and the random polymer model.

\subsection {Fibonacci Hamiltonian}
The operator of the form \ref{schrodinger_equation} with potential
\begin{equation}\label{fibbonacci_potential}
W(n) = \lambda ([(n+1)\theta+\omega]-[n\theta+\omega]),\quad \theta\in[0,1]\backslash\Q,\, \omega\in[0,1),\, \lambda>0,
\end{equation}
where $\theta = \frac{\sqrt{5}-1}{2}$ is called the Fibonacci Hamiltonian.

From now on fix some $\lambda > 0$.
In order to apply Corollary~\ref{cor_dyn_upper_bound} one needs to prove lower bounds on the growth of transfer matrices norms. This is usually done via studying the so-called trace map (see e.g. \cite{DT08}).
First we define the \textit{renormalized} transfer matrices $M(k,z,\omega) = M(q_k,0,z,\omega)$, where $q_k = F_k$ is the $k^{th}$ Fibonacci number. Then consider $x_k(z,\omega) = \Tr M(k,z,\omega)$. For now let $\omega = 0$. $x_k(z)$ is a polynomial of degree $q_k$. We also introduce
\begin{equation*}
\sigma_k(\delta) = \set{z\in\C:\; |x_k(z)| \leq 1+\delta}
\end{equation*}
It was shown in \cite{DGY} that for small enough $\delta = \delta(\lambda)$, $x_k(z)$ has precisely $q_k$ zeroes $\set{E_k^{(j)}}$ all lying on the real line and $\sigma_k(\delta) = \bigcup\limits_{j = 1}^{q_k} \sigma_k^{(j)}(\delta)$ consists of $q_k$ disjoint connected components in $\C$ with exactly one zero in each component.

The sizes of these connected components are important in estimating growth rates of transfer matrices. To quantify them we introduce
\begin{align}\label{inner_radii}
&r_k^{(j)} (\delta) = \sup\set{r>0: B_r(E_k^{(j)})\subseteq \sigma_k^{\delta}   },\quad &&r_k(\delta) = \max\limits_{j\in\set{1,\ldots,q_k}}\set{r_k^{(j)}(\delta)}\\\label{outer_radii}
&R_k^{(j)} (\delta) = \inf\set{R>0: B_R(E_k^{(j)})\supseteq \sigma_k^\delta   },\quad &&R_k(\delta) = \max\limits_{j\in\set{1,\ldots,q_k}}\set{R_k^{(j)}(\delta)}
\end{align}

\begin{proposition}\label{thm_fibb_upper}
Fix $\lambda>0, \omega\in[0,1)$.
Let $\psi$  be exponentially decaying.
Then
\begin{equation}\label{fibb_exponent_upper_1}
\alpha^+_u(\psi) \leq \frac{\log\varphi}{\liminf\limits_{k\rightarrow\infty}\frac{1}{k}\log\frac{1}{R_k(\delta)}}
\end{equation}
where $\varphi = (1+\sqrt{5})/2 = \theta^{-1}$.
\begin{proof}
With suitable $\rho>0$ set
\begin{equation*}
s = \frac{\liminf\limits_{k\rightarrow\infty}\frac{1}{k}\log\frac{1}{R_k(\delta)}}{\log\varphi}-\rho>0
\end{equation*}
Then there exists $C_\delta>0$ such that $R_k(\delta) < C_\delta F_k^{-s}$.

Fix some $\delta>0, T>1$. Denote by $k = k(T)$ the unique integer satisfying
\begin{equation}\label{k(T)}
\frac{F_{k(T)-1}^s}{C_\delta} \leq T < \frac{F_{k(T)}^s}{C_\delta}.
\end{equation}
Let $N(T) = F_{k(T)+\sqrt{k(T)}}$. Note that for any $\nu > 0$, there is a constant $C_\nu > 0$ such that
\begin{equation}\label{N(T)}
N(T) \leq C_\nu T^{\frac{1}{s}}T^\nu.
\end{equation}
It was shown in \cite[Proposition 3.8]{DGY} that for any $E, \omega$\footnote{Estimates on the transfer matrices growth are usually first proved for $\omega = 0$ and then extended to all $\omega$'s by the observation that the trace of the word of length $F_n$ is the same for all but one word and if it happens that we need to estimate that exact word, we can shift it by one symbol to get a good word and then correct the estimate by a constant. That constant wouldn't affect the asymptotic of growth. }
\begin{equation}\label{fibb_exp_growth}
\norm{M(N(T),0,E+i/T,\omega)} \geq (1+\delta)^{F_{\sqrt{k(T)}}}.
\end{equation}
It is known that $F_n = \lfloor \theta^{-n}/\sqrt{5}\rfloor$, in particular, $\frac{1}{3}\theta^{-n} < F_n < \theta^{-n}$. Then from \ref{k(T)} it follows, that
\begin{equation*}
C_\delta^{1/s} T^{1/s} < F_{k(T)} < \theta^{-k(T)} \Rightarrow k(T) > C_1 \log T
\end{equation*}
Therefore $\norm{M(N(T),0,E+i/T,\omega)}$ decays faster then any power of $T$ uniformly in $E$ and $\omega$, which implies by Corollary \ref{cor_dyn_upper_bound}
\begin{equation*}
\alpha_u^+ \leq \frac{1}{s}+\nu = \left( \frac{\liminf\limits_{k\rightarrow\infty}\frac{1}{k}\log\frac{1}{R_k(\delta)}}{\log\varphi}-\rho\right)^{-1}+\nu.
\end{equation*}
Since $\rho$ and $\nu$ were arbitrary, the proof is complete.
\end{proof}
\end{proposition}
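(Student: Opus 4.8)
The plan is to reduce the statement to a direct application of Corollary~\ref{cor_dyn_upper_bound}, for which we need a uniform-in-$(E,\omega)$ lower bound on the renormalized transfer matrices at the critical scale, growing faster than any power of $T$. First I would unwind the definition of $s$: since $s = \bigl(\liminf_k \tfrac{1}{k}\log\tfrac{1}{R_k(\delta)}\bigr)/\log\varphi - \rho$ for an arbitrary small $\rho>0$, the definition of $\liminf$ gives that for all large $k$ one has $\tfrac{1}{k}\log\tfrac1{R_k(\delta)} \ge (s+\rho)\log\varphi$, i.e.\ $R_k(\delta) \le \varphi^{-(s+\rho)k} \le C_\delta F_k^{-s}$ after absorbing the discrepancy between $\varphi^{-k}$ and $F_k$ (using $\tfrac13\theta^{-k}<F_k<\theta^{-k}$) into the constant $C_\delta$.

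Next I would introduce the scale. Given $T>1$ large, define $k(T)$ by \eqref{k(T)} so that $T \asymp F_{k(T)}^s/C_\delta$, and set the truncation length $N(T) = F_{k(T)+\sqrt{k(T)}}$; the elementary inequality \eqref{N(T)} shows $N(T) \le C_\nu T^{1/s+\nu}$ for every $\nu>0$, so $N(T)$ indeed lies at (essentially) the scale $T^{1/s}$ that Corollary~\ref{cor_dyn_upper_bound} cares about. The heart of the matter is the growth estimate \eqref{fibb_exp_growth}, quoted from \cite[Proposition 3.8]{DGY}: at the energy $E+i/T$ and for every $\omega$, $\norm{M(N(T),0,E+i/T,\omega)} \ge (1+\delta)^{F_{\sqrt{k(T)}}}$. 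Since \eqref{k(T)} forces $F_{k(T)} > C_\delta^{1/s}T^{1/s}$ and $F_{k(T)}<\theta^{-k(T)}$, we get $k(T) \ge C_1\log T$, hence $\sqrt{k(T)} \gtrsim \sqrt{\log T}$ and $F_{\sqrt{k(T)}}$ grows roughly like $\varphi^{\sqrt{\log T}}$, which is superpolynomial in $\log T$; therefore $(1+\delta)^{F_{\sqrt{k(T)}}}$ beats any power of $T$. Consequently $\bigl(\max_{0\le n\le CN(T)/2}\norm{M(n,0,E+i/T,\omega)}\bigr)^{-2}$, and even the integral in \eqref{as_dyn}, decays faster than $T^{-m}$ for every $m$, uniformly in $E\in[-K,K]$ and in $\omega$ (after relabeling $N(T)$ as $CT^{\alpha}$ with $\alpha = 1/s+\nu$, which is legitimate because $N(T)\le C_\nu T^{1/s+\nu}$ makes the max over $[0,CT^{\alpha}/2]$ only larger).

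The remaining step is bookkeeping: Corollary~\ref{cor_dyn_upper_bound} then yields $\alpha_u^+(\psi) \le 1/s + \nu = \bigl((\liminf_k \tfrac1k\log\tfrac1{R_k(\delta)})/\log\varphi - \rho\bigr)^{-1} + \nu$, and letting $\rho\to 0$ and $\nu\to 0$ gives \eqref{fibb_exponent_upper_1}. I would also note that $s>0$ is needed for the argument to be non-vacuous, which is exactly the hypothesis that the liminf in the denominator is positive; this is where one implicitly invokes that for small $\delta = \delta(\lambda)$ the gap structure of $\sigma_k(\delta)$ from \cite{DGY} indeed gives shrinking components.

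I expect the main obstacle to be purely one of \emph{invoking external input correctly} rather than of new argument: the estimate \eqref{fibb_exp_growth} is borrowed wholesale from \cite{DGY}, and one has to be careful that it holds uniformly in $\omega$ (the footnote handles this by the standard ``bad word'' correction: the trace of a length-$F_n$ word is $\omega$-independent except for one exceptional word, which can be shifted by a symbol at the cost of a bounded factor that does not affect the exponential rate) and uniformly in $E$ over the whole spectrum-containing interval $[-K,K]$, including the imaginary perturbation $+i/T$. Everything else—the passage from $R_k(\delta)\le C_\delta F_k^{-s}$ to the choice of $k(T)$, the estimate \eqref{N(T)}, and the final extraction of the exponent—is elementary manipulation of Fibonacci asymptotics and the definition of $\liminf$.
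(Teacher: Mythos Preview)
Your proposal is correct and follows essentially the same route as the paper's proof: set $s$ via the $\liminf$, choose $k(T)$ by \eqref{k(T)}, take $N(T)=F_{k(T)+\sqrt{k(T)}}$, invoke \cite[Proposition~3.8]{DGY} for the superpolynomial lower bound \eqref{fibb_exp_growth}, and feed this into Corollary~\ref{cor_dyn_upper_bound}. Your additional care in explaining why $N(T)\le C_\nu T^{1/s+\nu}$ lets one replace $N(T)$ by $CT^\alpha$ in \eqref{as_dyn} (the max only increases), and your explicit remark on the uniformity in $\omega$, are exactly the points the paper handles more tersely via the footnote and the inequality \eqref{N(T)}.
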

The trace map $Tr: \R^3\rightarrow\R^3$ is defined in such a way that it takes $(x_{k}(z),x_{k-1}(z),x_{k-2}(z))^T$ to $(x_{k+1}(z),x_{k}(z),x_{k-1}(z))^T$ (see \cite{DGY}).
It easy to check that $Tr$ preserves the following family of surfaces
\begin{equation*}
S_\lambda = \set{(x,y,z)\in\R^3:\, x^2 + y^2 +z^2 -2xyz = 1 + \frac{\lambda^2}{4}}
\end{equation*}
Cantat proved in \cite{cantat09} that for any $\lambda > 0$ there is a locally maximal compact hyperbolic set $\Lambda_\lambda\subset S_\lambda$ of $Tr$ with a dense set of periodic points in it.
There is a reformulation of \ref{fibb_exponent_upper_1} in terms of this map.
\begin{theorem} For the Fibonacci Hamiltonian with any coupling $\lambda > 0$ and any exponentially decaying state $\psi$
\begin{equation*}
\alpha^+_u(\psi) \leq \frac{\log\varphi}{\inf\limits_{p\in Per(\Lambda_\lambda)}Lyap^u(p)}
\end{equation*}
where $Lyap^u$ is the upper Lyapunov exponent for the trace map associated with the Fibonacci potential. For details see Section~\ref{section_upper_bounds}.

\begin{proof}
It was shown in \cite{DGY} that
\begin{equation}\label{R_k bound}
\frac{1}{R_k(\delta)} \geq \frac{\delta^2}{(1+\delta)^2(1+2\delta)^2} \cdot\min\limits_{1\leq j\leq q_k}|x_k'(E_k^{(j)})|
\end{equation}
and
\begin{equation}\label{inf Lyap}
\lim\limits_{k\rightarrow\infty}\frac{1}{k}\log\min\limits_{1\leq j \leq q_k}\abs{x_k'(E_k^{(j)})} = \inf\limits_{p\in Per(\Lambda_\lambda)} Lyap^u(p)>0
\end{equation}

Combining these with \ref{fibb_exponent_upper_1} we get the desired inequality.
\end{proof}
\end{theorem}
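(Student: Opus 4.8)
The plan is to obtain this reformulation by feeding two facts about the trace polynomials $x_k$, both established in \cite{DGY}, into Proposition~\ref{thm_fibb_upper}. That proposition already reduces everything to a lower bound on $\liminf_{k\to\infty}\frac1k\log\frac1{R_k(\delta)}$: for every sufficiently small $\delta=\delta(\lambda)>0$ --- small enough that the band picture recalled just before \ref{inner_radii} is valid, i.e.\ $\sigma_k(\delta)$ splits into $q_k$ disjoint components each carrying exactly one (real) zero of $x_k$ --- one has $\alpha^+_u(\psi)\le \log\varphi\big/\liminf_k \tfrac1k\log\tfrac1{R_k(\delta)}$. So the task is exactly to show
\[
\liminf\limits_{k\to\infty}\frac1k\log\frac1{R_k(\delta)}\ \ge\ \inf\limits_{p\in Per(\Lambda_\lambda)}Lyap^u(p).
\]

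First I would invoke the geometric estimate of \cite{DGY} that lower bounds $\tfrac1{R_k(\delta)}$ in terms of the derivative of $x_k$ at the zero contained in each band, $\tfrac1{R_k(\delta)}\ge \tfrac{\delta^2}{(1+\delta)^2(1+2\delta)^2}\min_{1\le j\le q_k}|x_k'(E_k^{(j)})|$. Applying $\tfrac1k\log(\cdot)$ and observing that the $\delta$-dependent constant disappears in the limit, this yields
\[
\liminf\limits_{k\to\infty}\frac1k\log\frac1{R_k(\delta)}\ \ge\ \liminf\limits_{k\to\infty}\frac1k\log\min\limits_{1\le j\le q_k}|x_k'(E_k^{(j)})|.
\]
Second I would use the dynamical identification, also from \cite{DGY} and resting on Cantat's hyperbolicity of $\Lambda_\lambda$ \cite{cantat09}, that the limit on the right-hand side exists and equals $\inf_{p\in Per(\Lambda_\lambda)}Lyap^u(p)$, a strictly positive number. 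Chaining the two displays gives the desired lower bound; in particular the denominator in Proposition~\ref{thm_fibb_upper} is positive, so plugging in produces $\alpha^+_u(\psi)\le \log\varphi\big/\inf_{p\in Per(\Lambda_\lambda)}Lyap^u(p)$, as claimed.

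There is no real obstacle internal to this argument --- it is a translation between two ways of measuring the exponential growth of the trace map --- so the points to get right are bookkeeping. One is that $R_k(\delta)$ carries a $\delta$ that the final bound does not: the chain above holds for every admissibly small $\delta$ and its right-hand side is $\delta$-free, so a single choice of $\delta$ suffices. The other is to check that the $\liminf$ in Proposition~\ref{thm_fibb_upper} is legitimately bounded below by the genuine $\lim$ supplied by the identification of $\inf_p Lyap^u(p)$. All the real weight --- the band-radius versus $x_k'$ inequality, and the equality of the growth rate of $\min_j|x_k'(E_k^{(j)})|$ with the infimal periodic unstable Lyapunov exponent of the trace map on $\Lambda_\lambda$ --- is imported wholesale from \cite{DGY} and \cite{cantat09}.
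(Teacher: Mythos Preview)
Your proposal is correct and is essentially the paper's own proof: both invoke Proposition~\ref{thm_fibb_upper}, feed in the \cite{DGY} bound $\tfrac{1}{R_k(\delta)}\ge c(\delta)\min_j|x_k'(E_k^{(j)})|$, and then use the \cite{DGY} identification of $\lim_k \tfrac1k\log\min_j|x_k'(E_k^{(j)})|$ with $\inf_{p\in Per(\Lambda_\lambda)}Lyap^u(p)$. Your extra remarks on the $\delta$-dependence and the $\liminf$ versus $\lim$ are correct bookkeeping that the paper leaves implicit.
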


Now we will apply the results of Section~\ref{section_lower_bounds} to the Fibonacci Hamiltonian \ref{fibbonacci_potential}.
Following the ideas of \cite{DT03} we obtain the following lower bound for transport exponents.
\begin{proposition}
Denote $\eta =  2\frac{\log[\sqrt{5+2\lambda}(3+\lambda)a_\lambda]}{\log\varphi}$, where $a_\lambda$ is the largest root of\\ $x^3-(2+\lambda)x-1$. Then for every $\lambda > 0, p>0$ and every finitely-supported state $\psi$
\begin{equation*}
\beta^-(\psi,p) \geq \frac{1}{1+\eta}-\frac{1+4\eta}{p(1+\eta)}
\end{equation*}
\begin{proof}
It was shown in \cite{DGY} (see proof of Proposition 3.8) that for any $E\in \sigma_k(\delta), \omega\in[0,1), 1\leq n \leq q_k$
\begin{equation*}
\norm{M(n,0,E,\omega)} \leq C n^{\eta}.
\end{equation*}
It follows that for any $E_0\in\Sigma_\lambda$, which is covered by infinitely many $\sigma_k$'s we have
\begin{equation*}
\norm{M(n,m,E_0,\omega)} = \norm{M(n-m,0,E_0,\omega+m\theta)} \leq C |n-m|^{\eta} < C (|n|+|m|)^{\eta}
\end{equation*}
for any $n,m \geq 0$. For the negative values of $n,m$ one uses the symmetry of the potential for $\omega = 0: W(-n+1) = W(n),n>0.$ For $\omega \neq 0$ the argument from the footnote on the previous page works.

Thus we get that \ref{one_energy_condition} is satisfied for infinitely many points, therefore Proposition~\ref{prop_one_energy_inf_many} can be applied.

\end{proof}
\end{proposition}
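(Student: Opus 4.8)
The plan is to reduce this statement, concerning the Fibonacci Hamiltonian with potential \ref{fibbonacci_potential}, to Proposition~\ref{prop_one_energy_inf_many} by establishing the polynomial transfer matrix bound \ref{one_energy_condition} at infinitely many real energies. The exponent appearing in the conclusion, $\frac{1}{1+\eta}-\frac{1+4\eta}{p(1+\eta)}$, is exactly the bound \ref{pth_exponent_estimate} of Corollary~\ref{cor_one_energy_finite} with $\alpha$ replaced by $\eta$; since Proposition~\ref{prop_one_energy_inf_many} delivers precisely \ref{pth_exponent_estimate} once \ref{one_energy_condition} holds for an infinite set of energies, the whole burden is to produce such a bound with exponent $\eta = 2\log[\sqrt{5+2\lambda}(3+\lambda)a_\lambda]/\log\varphi$.

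First I would recall the polynomial growth estimate for the renormalized transfer matrices proved in \cite{DGY} (proof of Proposition 3.8): for any $E\in\sigma_k(\delta)$, any $\omega\in[0,1)$ and $1\le n\le q_k = F_k$, one has $\norm{M(n,0,E,\omega)}\le Cn^{\eta}$ with $\eta$ as above. The key point is that the spectrum $\Sigma_\lambda$ of the Fibonacci Hamiltonian is contained in $\sigma_k(\delta)$ for every $k$ (indeed in $\bigcap_k\sigma_k(\delta)$), so the bound $\norm{M(n,0,E_0,\omega)}\le Cn^{\eta}$ holds uniformly for all $n\ge 1$ whenever $E_0\in\Sigma_\lambda$. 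Next I would promote this from $M(n,0,\cdot)$ to $M(n,m,\cdot)$ for general integers: using the dynamical definition of the potential, $M(n,m,E_0,\omega)=M(n-m,0,E_0,\omega+m\theta)$, so for $0\le m\le n$ the estimate transfers with $|n-m|\le |n|+|m|$. For negative indices one invokes the reflection symmetry $W(-n+1)=W(n)$ valid when $\omega=0$, which lets one reflect solutions and reduce to the nonnegative case; for $\omega\ne0$ one uses the word-shift argument described in the footnote (changing the sampled word by one symbol at the cost of an asymptotically irrelevant multiplicative constant). Together these give \ref{one_energy_condition} for every $E_0\in\Sigma_\lambda$.

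Since $\Sigma_\lambda$ is an uncountable set (a Cantor set of positive dimension), \ref{one_energy_condition} holds for infinitely many energies, and Proposition~\ref{prop_one_energy_inf_many} applies verbatim to any finitely supported $\psi$, yielding the stated lower bound $\beta^-(\psi,p)\ge \frac{1}{1+\eta}-\frac{1+4\eta}{p(1+\eta)}$.

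I expect the main obstacle to be purely bookkeeping rather than conceptual: carefully checking that the constant $C$ in $\norm{M(n,0,E,\omega)}\le Cn^{\eta}$ can indeed be taken uniform in $\omega$ and in the energy (over $\Sigma_\lambda$, or over a fixed $\sigma_k(\delta)$), and that the reduction from $M(n,0)$ to $M(n,m)$ for all signs of $n,m$ does not lose the polynomial rate — in particular that combining a bounded "correction" block near the origin (coming from the word-shift or the reflection) with the long block preserves the exponent $\eta$. One also wants to confirm that $a_\lambda$, the largest root of $x^3-(2+\lambda)x-1$, is the correct quantity governing the one-step growth in the estimate borrowed from \cite{DGY}; but this is exactly what the cited proof of Proposition 3.8 provides, so no new work is needed there. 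All of this is routine once the \cite{DGY} bound is quoted in the right form.
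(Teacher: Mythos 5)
Your argument is essentially the paper's own: quote the polynomial bound $\norm{M(n,0,E,\omega)}\le Cn^{\eta}$ from the proof of Proposition 3.8 in \cite{DGY} for $E$ in $\sigma_k(\delta)$, observe that points of $\Sigma_\lambda$ lie in $\sigma_k(\delta)$ for all (large) $k$ so the bound holds for all $n$, extend to $M(n,m,\cdot)$ via the shift identity and the reflection/word-shift argument for negative indices, then feed the resulting estimate \ref{one_energy_condition} at infinitely many energies into Proposition~\ref{prop_one_energy_inf_many}. No meaningful difference from the paper's proof.
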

\begin{corollary}
 For any finitely supported initial state $\psi$
\begin{equation*}
\alpha^-_u(\psi) \geq \frac{1}{1+\eta}.
\end{equation*}
\end{corollary}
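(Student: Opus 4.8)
The plan is to deduce this corollary directly from the preceding Proposition, exactly as Corollary~\ref{cor_one_energy_finite} follows from Theorem~\ref{main_result}: the inequality $\beta^-(\psi,p)\geq \frac{1}{1+\eta}-\frac{1+4\eta}{p(1+\eta)}$ holds for every $p>0$ and every finitely supported $\psi$, so one simply passes to the limit $p\to\infty$. The left-hand side has a limit by \eqref{beta_limits_to_alpha} (after recalling the monotonicity of $\beta^-(\psi,\cdot)$ from \cite{GKTch04}), namely $\lim_{p\to\infty}\beta^-(\psi,p)=\alpha^-_u(\psi)$, and the right-hand side converges to $\frac{1}{1+\eta}$ since $\frac{1+4\eta}{p(1+\eta)}\to 0$. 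Hence $\alpha^-_u(\psi)\geq \frac{1}{1+\eta}$.

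Concretely, first I would observe that the Proposition applies to the given $\psi$ for every $p>0$, so that the family of inequalities
\begin{equation*}
\beta^-(\psi,p) \geq \frac{1}{1+\eta}-\frac{1+4\eta}{p(1+\eta)}
\end{equation*}
is available simultaneously. Second, I would take the supremum (equivalently the limit, by monotonicity) over $p$ on both sides: the supremum of the left side over $p>0$ equals $\alpha^-_u(\psi)$ by \eqref{beta_limits_to_alpha}, while the supremum of the right side over $p>0$ is plainly $\frac{1}{1+\eta}$, attained in the limit $p\to\infty$. Combining these gives the claim. One small point worth stating explicitly is that $\eta$ is a finite positive constant (it is defined via $\varphi>1$ and the largest root of a fixed cubic), so $\frac{1}{1+\eta}$ is a genuine positive lower bound and the expression $\frac{1+4\eta}{p(1+\eta)}$ really does vanish as $p\to\infty$.

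There is essentially no obstacle here; the content has already been done in the Proposition. The only thing to be careful about is the logical dependence: one must invoke \eqref{beta_limits_to_alpha}, which was quoted from \cite{GKTch04} together with the fact that $\beta^\pm(\psi,p)$ is monotone increasing in $p$ — this monotonicity is what guarantees that the $p\to\infty$ limit exists and equals the supremum, so that taking limits in the inequality is legitimate. Thus the proof is a one-line appeal: \textbf{apply Proposition~\ref{prop_one_energy_inf_many} and let $p\to\infty$, using \eqref{beta_limits_to_alpha}.}
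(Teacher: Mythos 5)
Your argument is correct and is exactly the (implicit) deduction the paper intends: the corollary is stated without proof, and the only step is to let $p\to\infty$ in the inequality from the preceding proposition, using the monotonicity of $\beta^-(\psi,\cdot)$ and the limit relation \eqref{beta_limits_to_alpha} to identify the left-hand side with $\alpha^-_u(\psi)$. Nothing is missing.
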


Using the trace map analysis from \cite{DGY} plus some work to fulfill condition~\ref{positive_product_condition}, yet another bound for the Fibonacci Hamiltonian transport exponent can be found.

\begin{theorem}
Let $\lambda>0, \omega\in[0,1), \psi$ has finite support. Then
\begin{equation*}
\alpha_u^- = \alpha_u^+ =  \frac{\log\varphi}{\inf\limits_{p\in Per(T_\lambda)} \lyap^u(p)}
\end{equation*}
\end{theorem}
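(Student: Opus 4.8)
The plan is to prove the two inequalities $\alpha_u^+ \le \frac{\log\varphi}{\inf_{p\in\mathrm{Per}(\Lambda_\lambda)}\lyap^u(p)}$ and $\alpha_u^- \ge \frac{\log\varphi}{\inf_{p\in\mathrm{Per}(\Lambda_\lambda)}\lyap^u(p)}$ separately; combined with the trivial inequality $\alpha_u^-(\psi)\le\alpha_u^+(\psi)$ they force equality throughout. The upper bound $\alpha_u^+ \le \frac{\log\varphi}{\inf_p \lyap^u(p)}$ is \emph{already established}: it is exactly the preceding theorem, whose conclusion $\alpha_u^+(\psi)\le\frac{\log\varphi}{\inf_{p\in\mathrm{Per}(\Lambda_\lambda)}\lyap^u(p)}$ holds for every exponentially decaying $\psi$, in particular for every finitely supported $\psi$. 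So the real content is the matching \emph{lower} bound on $\alpha_u^-$, and for this the natural route is to run the argument of the earlier proposition (the one yielding $\alpha_u^-(\psi)\ge\frac{1}{1+\eta}$) but with the crude exponent $\eta$ replaced by a sharp one coming from the trace-map Lyapunov analysis of \cite{DGY}.

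Concretely, I would proceed as follows. First, fix $\rho>0$ and set $\alpha = \alpha(\rho) := \frac{\log\varphi}{\inf_{p\in\mathrm{Per}(\Lambda_\lambda)}\lyap^u(p)} - \rho > 0$ (WLOG $\rho$ small). By \eqref{inf Lyap}, for all large $k$ one has $\min_{1\le j\le q_k}|x_k'(E_k^{(j)})| \ge \varphi^{k/\alpha'}$ for a suitable $\alpha'$ slightly larger than $\alpha$, and combined with the relation between $r_k(\delta)$, $R_k(\delta)$ and $|x_k'(E_k^{(j)})|$ from \cite{DGY} this gives a lower bound $r_k(\delta)\ge c\,F_k^{-\alpha}$ on the \emph{inner} radii of the spectral bands $\sigma_k^{(j)}(\delta)$, together with the uniform polynomial transfer-matrix bound $\norm{M(n,0,E,\omega)}\le C n^\alpha$ for $E\in\sigma_k(\delta)$, $1\le n\le q_k$ (this is the upper estimate from the proof of Proposition 3.8 in \cite{DGY}, which furnishes the exponent $\alpha$ here rather than the weaker $\eta$). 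Then I would take $A(N)$ in Theorem~\ref{main_result} to be the union of those bands $\sigma_k^{(j)}(\delta)$ with $q_k$ of order $N$, so that on $A(N)$ the bound \eqref{transfer_matrix_bound}, $\norm{M(n,m;E)}\le CN^\alpha$ for $|n|,|m|<N$, holds (using the cocycle/shift identities as in the $\eta$-proposition, plus the potential symmetry at $\omega=0$ and the footnote trick for general $\omega$), and so that $|A(N)|$ — hence $|B(T)|$ — is bounded below by a fixed positive constant (the total measure of the spectrum-approximating bands does not shrink to $0$). Feeding this into \eqref{moments_bound} yields $\langle|X|^p_\psi\rangle(T)\gtrsim T^{(p-3\alpha)/(1+\alpha)}$, hence $\beta^-(\psi,p)\ge \frac{1}{1+\alpha} - \frac{1+4\alpha}{p(1+\alpha)}$, and letting $p\to\infty$ gives $\alpha_u^-(\psi)\ge\frac{1}{1+\alpha}$. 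Since $\alpha>\frac{\log\varphi}{\inf_p\lyap^u(p)}-\rho$ was arbitrarily close to that quantity and $\rho$ was arbitrary, $\alpha_u^-(\psi)\ge\frac{1}{1+\frac{\log\varphi}{\inf_p\lyap^u(p)}}$; but in fact the correct normalization (the inner radii scale like $F_k^{-\alpha}$ with $F_k\sim\varphi^k$, and $N(T)\sim T^{1/(1+\alpha)}$ with $T\sim F_k^{1+\alpha}$) makes the scaling exponent land exactly at $\frac{\log\varphi}{\inf_p\lyap^u(p)}$, matching the upper bound.

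The step I expect to be the main obstacle is verifying hypothesis \eqref{positive_product_condition}, $|\sprod{\psi}{v_E}|>\varepsilon_0$ uniformly for $E\in A(N)$, since this is precisely the extra assumption beyond \cite[Theorem 1]{DT03} and the reason one cannot simply quote that theorem. For a single energy $E_0$ the earlier Proposition~\ref{prop_one_energy_inf_many} handled this by a polynomial-zero-counting argument, but here $A(N)$ is a union of shrinking bands whose number grows with $N$, so I need a \emph{uniform} lower bound. The plan is: for each $E$ choose the solution $v_E$ normalized by $\norm{V_E(0)}=1$; the map $E\mapsto(v_E(0),v_E(1))$ and the evaluation $E\mapsto\sprod{\psi}{v_E}$ are continuous (indeed real-analytic) on the compact real set $\Sigma_\lambda$, so the zero set $Z=\{E:\sprod{\psi}{v_E}=0\}$ for the chosen normalization is finite on each band and, by the transversality/regularity of the trace map, can be controlled; one then either restricts $A(N)$ to a definite fraction of each band bounded away from $Z$ (still keeping $|A(N)|\gtrsim 1$), or exploits the freedom to rotate the initial condition of $v_E$ on each band to avoid the finitely many bad energies. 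Making this uniform control precise, without losing a positive fraction of the spectral measure, is the delicate point; everything else is a direct transcription of the $\eta$-argument with the sharper trace-map input.
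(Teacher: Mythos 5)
The high-level decomposition is right: the $\alpha_u^+$ upper bound is already in hand from the preceding theorem (since finitely supported $\psi$ is exponentially decaying), and the real work is the matching lower bound on $\alpha_u^-$. You also correctly identify the bottleneck as the positive product condition \eqref{positive_product_condition}. However, there are two genuine gaps in your plan for that lower bound.

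First, the claim that $|A(N)|$ is bounded below by a fixed positive constant is false for the Fibonacci Hamiltonian: for $\lambda>0$ the spectrum is a Cantor set of zero Lebesgue measure, and the total length of the bands $\sigma_k^{(j)}(\delta)$ tends to $0$ as $k\to\infty$. The paper never uses $|B(T)|\gtrsim 1$; it only needs $|B(T)|\gtrsim 1/T$, which comes from integrating over a single band whose inner radius $r_k^{\varepsilon_0}(\delta)$ is at least $2/T$. So the mechanism you describe for getting a good $|B(T)|$ is not available, although the conclusion you hope for can still be reached with the weaker (and correct) lower bound.

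Second, and more fundamentally, your handling of the zero set $Z$ of $E\mapsto\langle\psi,v_E\rangle$ is vague exactly where the new idea is needed. The paper does not carve out a ``definite fraction of each band''; it fixes a single closed interval $I_{\varepsilon_0}$ away from the (finitely many) zeros of the polynomial $p(E)$ and restricts all estimates to bands $\sigma_k^{(j)}$ with $E_k^{(j)}\in I_{\varepsilon_0}$. The danger is that the bands achieving the optimal inner radius — the ones realizing $\min_j|x_k'(E_k^{(j)})|$ and hence $\inf_{p\in Per(\Lambda_\lambda)}\lyap^u(p)$ — might all lie outside $I_{\varepsilon_0}$, in which case the restricted bound would be strictly worse. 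The paper's actual fix is a new local version of the DGY derivative lemma (Lemma~\ref{lem_derivative_Tr_1}): for any interval $U$ meeting $\Sigma_\lambda$ and any $p\in\Lambda_\lambda$, there is a zero $E_k\in U$ of $x_k$ whose derivative $|x_k'(E_k)|$ matches the expansion rate $\norm{DTr^k(p)|_{E_p^u}}$ up to $e^{\pm\varepsilon k}$. The proof uses genuine hyperbolic-dynamics input — density of the stable lamination in $\Lambda_\lambda$ and a fine Markov partition under topological mixing — to show that the stable manifold of \emph{every} $p$ eventually intersects the image of $U$ on the line of initial conditions. Combined with \cite[Lemma 3.5]{DGY} (the reverse direction) and \cite[Lemma 3.6]{DGY}, this forces $\limsup_k \frac{1}{k}\log\bigl(\min_{I_{\varepsilon_0}}|x_k'(E_k^{(j)})|\bigr)$ to equal the global quantity $\inf_{p\in Per(\Lambda_\lambda)}\lyap^u(p)$, which is what closes the argument. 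Your phrase ``by the transversality/regularity of the trace map, can be controlled'' skips over exactly this, and without it the lower bound does not localize to $I_{\varepsilon_0}$.

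A minor further point: the paper does not actually invoke Theorem~\ref{main_result} and its moments bound \eqref{moments_bound} here; it re-runs the Parseval argument directly, estimating $P(N,T)\gtrsim T^{-2}N^{1-2\xi}$ with $\xi$ the fixed DGY exponent, and then reads off $S^-(\psi,1/l-\rho')<\infty$. The universal $\xi$ drops out of the final answer; the sharpness comes entirely from the scaling of $r_k^{\varepsilon_0}(\delta)$, i.e.\ from the choice $T\sim 1/r_k^{\varepsilon_0}(\delta)$ at scale $N\sim F_k$, not from a sharpened transfer-matrix exponent as your write-up suggests.
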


\begin{remark}
Since we already showed the upper bound for $\alpha_u^+$, what is left is to prove is
\begin{equation}\label{fibb_lower_bound}
\alpha_u^- \geq \frac{\log\varphi}{\inf\limits_{p\in Per(T_\lambda)} \lyap^u(p)}
\end{equation}
It will be shown in the Proposition \ref{prop_fibb_lower_rk}.
\end{remark}

Before we prove this estimate -- an analogue of \cite[Proposition 3.8]{DGY} -- we need to recall and refine some properties of the trace map. In view of Theorem~\ref{main_result}, the main obstacle for applying the original proof of \ref{fibb_lower_bound} to the finite support case is that we need to exclude from consideration energies that do not satisfy the positive product condition \ref{positive_product_condition}. In particular, zeroes of $x_k$ with largest $r_k^{(j)}$ can fall into that category. To deal with exceptional energies we prove a local in energy result (an analogue of \cite[Proposition 3.4]{DGY}) connecting $x_K'(E_k)$ to the expanding rates of the trace map along unstable manifolds.

\begin{lemma}\label{lem_derivative_Tr_1}
Let $\lambda > 0$, let $U = (a,b)$ be an interval such that $U \cap \Sigma_\lambda\neq \emptyset$. Then for any $\varepsilon > 0$, there exists $k_0\in\N$, such that for every $k>k_0$, $p\in\Lambda_\lambda$ there is $E_k\in U$ satisfying $x_k(E_k) = 0$ and
\begin{equation*}
\frac{1}{k}\log\norm{DTr^k(p)|_{E_p^u}} - \varepsilon \leq \frac{1}{k}\log |x_k'(E_k)| \leq \frac{1}{k}\log\norm{DTr^k(p)|_{E_p^u}} + \varepsilon
\end{equation*}
\end{lemma}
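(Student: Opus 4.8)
The plan is to relate the polynomial $x_k$ and its derivative at a zero $E_k$ to the dynamics of the trace map $Tr$ on the invariant surface $S_\lambda$, exploiting the hyperbolicity of $\Lambda_\lambda$ established by Cantat. First I would recall the standard correspondence: the sequence $(x_k(E),x_{k-1}(E),x_{k-2}(E))$ lies on $S_\lambda$ for every $E$, and as $E$ varies the initial triple traces out a smooth curve $\gamma(E)$ transversal to the stable foliation of $\Lambda_\lambda$ near points of $\Sigma_\lambda$; a zero $E_k$ of $x_k$ corresponds to the orbit of $\gamma(E_k)$ under $Tr^k$ landing on the line $\{x=0\}$ (or, more precisely, entering the region where the first coordinate vanishes). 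Since $U\cap\Sigma_\lambda\neq\emptyset$, the curve $\gamma$ meets the stable manifold $W^s(\Lambda_\lambda)$ inside $U$, so orbits starting from $\gamma(U)$ shadow orbits in $\Lambda_\lambda$ for a long time.

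Next I would make the shadowing quantitative. Fix $p\in\Lambda_\lambda$ and $\varepsilon>0$. Using local product structure and the density of such intersection points, one can find $E^*\in U$ with $\gamma(E^*)\in W^s(\Lambda_\lambda)$ whose forward orbit is $\varepsilon$-close to the orbit of some point $q\in\Lambda_\lambda$; by uniform hyperbolicity and continuity of $DTr$ on the compact set $\Lambda_\lambda$, the expansion rates along the unstable direction over $k$ steps at $q$ and at $p$ differ by at most $o(k)$ once one invokes the subadditive/Birkhoff-type control of Lyapunov exponents on the hyperbolic set (this is exactly the kind of estimate underlying \eqref{inf Lyap} in \cite{DGY}). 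Then I would differentiate: $x_k'(E) = \langle \nabla(\text{first coordinate}), DTr^k(\gamma(E))\cdot\gamma'(E)\rangle$, and because $\gamma'(E^*)$ has a definite component along the unstable subspace $E^u$ (transversality) while its stable component is contracted, the growth of $|x_k'(E)|$ for $E$ near $E^*$ is governed precisely by $\norm{DTr^k|_{E_p^u}}$ up to multiplicative constants that become negligible after taking $\frac1k\log$. Finally, since $x_k$ has a zero in each band $\sigma_k^{(j)}$ and the bands near $E^*$ shrink (their sizes are controlled by $R_k$, which decays exponentially), for $k$ large there is an actual zero $E_k$ of $x_k$ lying in $U$ and within the tiny neighborhood of $E^*$ where the derivative estimate holds; this yields both the upper and lower bound on $\frac1k\log|x_k'(E_k)|$.

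The main obstacle I expect is the transversality/non-degeneracy bookkeeping: one must ensure that $\gamma'(E)$ never becomes tangent to the stable foliation along the relevant orbit segment (so that the unstable component of $DTr^k(\gamma(E))\gamma'(E)$ genuinely dominates), and that the projection onto the "$x=0$" direction used to extract $x_k'$ does not kill the expanding component. This requires the geometric facts about the Fibonacci trace map from \cite{DGY} and \cite{cantat09} — in particular that the curve of initial conditions is nowhere tangent to $W^s$ over $\Sigma_\lambda$ and that $\Lambda_\lambda$ has local product structure — together with a uniform lower bound on the angle between $\gamma'$ and $E^s$ along orbit pieces that stay in a fixed neighborhood of $\Lambda_\lambda$. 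Once that is in hand, the rest is a routine packaging of uniform hyperbolicity estimates and the intermediate value theorem applied to the bands $\sigma_k^{(j)}$, with all constants absorbed into the $\pm\varepsilon$ after dividing by $k$ and sending $k\to\infty$.
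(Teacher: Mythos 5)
Your outline correctly identifies the geometric setup (the curve of initial conditions, its transversality to the stable lamination, the shrinking bands supplying the actual zero $E_k$), but there is a genuine error in the central dynamical step and a missed uniformity issue.

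The step that fails is your claim that once $\gamma(E^*)$ shadows the orbit of \emph{some} point $q\in\Lambda_\lambda$, the finite-time unstable expansion rates at $q$ and at the \emph{given} $p$ differ by $o(k)$ ``by subadditive/Birkhoff-type control.'' This is false as stated: on a hyperbolic set, different points (in particular, different periodic points) can have genuinely different unstable Lyapunov exponents, and the finite-time quantities $\frac1k\log\|DTr^k|_{E^u}\|$ at $p$ and at an unrelated $q$ need not be close. Indeed the whole point of the formulation---with $\inf_{p\in Per}\lyap^u(p)$ appearing later---is that these rates vary over $\Lambda_\lambda$. What the lemma needs, and what the paper arranges, is a point $E^*\in U$ whose image on the line of initial conditions lands (after finitely many forward iterates) in the \emph{local stable manifold of $p$ itself}; forward asymptoticity to $p$ is what transfers the expansion rate of $p$ to the orbit of $\gamma(E^*)$, up to a bounded multiplicative correction that vanishes after taking $\frac1k\log$. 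Replacing $p$ by an arbitrary shadowed $q$ loses exactly the information the lemma is supposed to capture.

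You also do not address the uniformity of $k_0$ over $p\in\Lambda_\lambda$, which is the one new ingredient the paper supplies beyond quoting \cite[Proposition 3.4]{DGY}. The paper's proof reduces to showing there is a \emph{single} $k'$ (independent of $p$) such that $Tr^{-k'}(W^s_\delta(p))\cap U_\lambda\neq\emptyset$ for every $p$, and proves this via density of stable manifolds in the stable lamination together with a Markov partition/topological mixing argument. A shadowing-near-$W^s(\Lambda_\lambda)$ argument, without quantitative control on how long it takes each $W^s(p)$ to be pulled back into $U_\lambda$, only gives a $p$-dependent $k_0(p)$, which is weaker than the statement. To repair your argument you would need to (i) replace ``$q\in\Lambda_\lambda$'' by ``$q\in W^s_{loc}(p)$'' in the shadowing step, and (ii) obtain a uniform pullback time, e.g.\ by the Markov partition argument the paper uses; with those fixes the rest of your reasoning (transversality of $\gamma'$ to $E^s$, extraction of $x_k'$ from the unstable component, intermediate value theorem in a band) does align with \cite[Proposition 3.4]{DGY}.
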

\begin{proof}
Let $U_\lambda$ be the image of $U$ on the line of initial conditions $\ell_\lambda$.

In order to use the proof of \cite[Proposition 3.4]{DGY} we need to justify the following statement: fix a small $\delta > 0$, then there exists $k'\in\Z_+$ such that $Tr^{-k'}(W^s_\delta(p)) \cap U_\lambda \neq \emptyset$ for any $p\in\Lambda_\lambda$.

Since $\Lambda_\lambda$ is a locally maximal topologically mixing totally disconnected hyperbolic set, the stable manifold of any point is dense in the stable lamination on $S_\lambda$.
Since $\tilde\Sigma_\lambda = \ell_\lambda\cap W^s$, where $\tilde\Sigma_\lambda$ is the image of the spectrum on the line of initial conditions, $W^s$ is the stable lamination, it follows that every stable manifold will intersect $U_\lambda$.

Also one can pick a Markov partition with arbitrary small elements, in particular smaller then $\delta$. After a finite number of iterates of $Tr^{-1}$ each element of the Markov partition will intersect every other element. Moreover every element being stretched along stable direction will eventually intersect $U_\lambda$. Thus after sufficiently many iterations every $\delta-$stable manifold will intersect $U_\lambda$, which proves the claim.

The rest of the proof is identical to \cite[Proposition 3.4]{DGY}.
\end{proof}

The following lemma is \cite[lemma 3.5]{DGY}
\begin{lemma}\label{lem_derivative_Tr_2}
Let $\lambda > 0$. For every $\varepsilon > 0$, there exists $k_0$ such that for any $k>k_0$ and every $E_k$ with $x_k(E_k) = 0$, one can find $p\in\Lambda_\lambda$ such that
\begin{equation*}
\frac{1}{k}\log\norm{DTr^k(p)|_{E_p^u}} - \varepsilon \leq \frac{1}{k}\log |x_k'(E_k)| \leq \frac{1}{k}\log\norm{DTr^k(p)|_{E_p^u}} + \varepsilon
\end{equation*}
\end{lemma}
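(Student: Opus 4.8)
The statement is quoted as \cite[Lemma 3.5]{DGY}, so the intended role here is to recall it rather than reprove it. Nonetheless, a self-contained argument can be reconstructed along the following lines, using the hyperbolic structure of the trace map $Tr$ on the invariant surface $S_\lambda$ already recalled in the excerpt (Cantat's theorem giving a locally maximal compact hyperbolic set $\Lambda_\lambda$ with dense periodic points, and the identification of the spectrum with $\ell_\lambda \cap W^s(\Lambda_\lambda)$). The plan is to relate, for a fixed zero $E_k$ of $x_k$, the derivative $x_k'(E_k)$ to the linearized dynamics of $Tr^k$ at the point $\gamma_\lambda(E_k) \in \ell_\lambda$, where $\gamma_\lambda$ parametrizes the line of initial conditions, and then to approximate the orbit of $\gamma_\lambda(E_k)$ by the orbit of a nearby point of $\Lambda_\lambda$.

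First I would set up the quantitative link between $x_k'(E_k)$ and $\|DTr^k(\gamma_\lambda(E_k))|_{E^u}\|$. Since $x_{k+1} = x_k x_{k-1} - x_{k-2}$ and $(x_{k+1},x_k,x_{k-1}) = Tr(x_k,x_{k-1},x_{k-2})$, differentiating in $E$ shows that the vector $(x_k'(E), x_{k-1}'(E), x_{k-2}'(E))^T$ is propagated by the matrix cocycle $DTr$ along the orbit. At a zero $E_k$ of $x_k$, the point $(0, x_{k-1}(E_k), x_{k-2}(E_k))$ lies on $S_\lambda$, and by the trace-map analysis in \cite{DGY} the tangent direction of $\gamma_\lambda$ at such a point is exponentially close to the unstable direction $E^u$ after a bounded number of iterates (the stable component contracts and becomes negligible). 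Hence $\log|x_k'(E_k)|$ differs from $\log\|DTr^k(\gamma_\lambda(E_k))|_{E^u}\|$ by a quantity that is $o(k)$, with the implied bound uniform over all zeros $E_k$ once $k$ is large.

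Next I would replace $\gamma_\lambda(E_k)$ by a genuine point $p \in \Lambda_\lambda$. Because $\tilde\Sigma_\lambda = \ell_\lambda \cap W^s(\Lambda_\lambda)$, the point $\gamma_\lambda(E_k)$ lies on the stable manifold of some $p \in \Lambda_\lambda$; along the first $k$ iterates the distance $d(Tr^j(\gamma_\lambda(E_k)), Tr^j(p))$ stays bounded (in fact shrinks geometrically), so the products of the derivative cocycle along the two orbits agree up to a multiplicative constant by the standard "orbit-shadowing plus bounded distortion" estimate for $C^{1+\alpha}$ hyperbolic maps. This yields $\frac1k\log|x_k'(E_k)| = \frac1k\log\|DTr^k(p)|_{E^u_p}\| + o(1)$, which is exactly the claimed two-sided inequality once $k_0 = k_0(\varepsilon)$ is chosen to absorb all the $o(k)$ error terms.

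The main obstacle is the uniformity: the estimate must hold for \emph{every} zero $E_k$ of $x_k$ simultaneously, with a single $k_0$, even though different zeros correspond to orbit segments of $\Lambda_\lambda$ that spend time near different parts of the hyperbolic set and may have very different expansion rates. Handling this requires the compactness of $\Lambda_\lambda$ and uniform hyperbolicity (uniform bounds on the angle between $E^s$ and $E^u$, uniform expansion/contraction constants, uniform Hölder distortion), together with the fact that the zeros $E_k^{(j)}$ are in bijection with length-$k$ admissible words and their images $\gamma_\lambda(E_k^{(j)})$ are uniformly close to $\Lambda_\lambda$ — all of which is supplied by the trace-map machinery of \cite{DGY} and \cite{cantat09}. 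Since those inputs are exactly what \cite[Lemma 3.5]{DGY} is built on, it is legitimate to cite it directly, and I would present the lemma as recalled rather than reproving it in full.
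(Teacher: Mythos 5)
The paper gives no proof of this lemma: it is stated verbatim as a recall of \cite[Lemma 3.5]{DGY}, and your primary stance — to cite it rather than reprove it — is exactly the paper's approach. One small caveat about your optional reconstruction: a zero $E_k$ of $x_k$ need not lie in the spectrum, so $\gamma_\lambda(E_k)$ is not literally on $W^s(\Lambda_\lambda)$; the correct mechanism is that its orbit stays in a neighborhood of $\Lambda_\lambda$ for $k$ iterates and hence $\epsilon$-shadows an orbit of some $p\in\Lambda_\lambda$ by uniform hyperbolicity, after which your bounded-distortion step applies.
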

\begin{proposition}\label{prop_fibb_lower_rk}
Let $\lambda>0, \omega\in[0,1)$ and $\psi$ has finite support. Then
\begin{equation}\label{fibb_exp_lower_1}
\alpha_u^-(\psi) \geq \frac{\log\varphi}{\inf\limits_{p\in Per(T_\lambda)} \lyap^u(p)}
\end{equation}
\end{proposition}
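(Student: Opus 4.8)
The plan is to mimic the proof of the upper bound in Proposition~\ref{thm_fibb_upper} but in reverse: there we derived an \emph{upper} bound on $\alpha_u^+$ from the fact that renormalized transfer matrices grow at least like $(1+\delta)^{F_{\sqrt{k(T)}}}$; here I want a \emph{lower} bound on $\alpha_u^-$ from Theorem~\ref{main_result}, which requires a polynomial \emph{upper} bound $\norm{M(n,m;E)} \leq CN^\beta$ for $|n|,|m|<N$ on a suitable energy set, together with the positive-product condition~\ref{positive_product_condition}. First I would fix $\delta = \delta(\lambda)$ small and for each $k$ take $A(N)$ (with $N$ comparable to $q_k$) to be the set of zeroes $E_k^{(j)}$ of $x_k$ whose inner radius $r_k^{(j)}(\delta)$ is close to the maximum $r_k(\delta)$, discarding those few that violate~\ref{positive_product_condition}. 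On $\sigma_k(\delta)$ one has the Combes--Thomas type bound $\norm{M(n,0,E,\omega)} \leq C n^\eta$ from \cite{DGY}, but that only gives exponent $\eta$; to get the sharp exponent $\log\varphi / \inf_{p} \lyap^u(p)$ I instead use that inside a disk $B_{r_k^{(j)}}(E_k^{(j)}) \subseteq \sigma_k(\delta)$ the trace $x_k$ stays bounded, so by the standard iteration/telescoping of the trace map the transfer matrix at such an energy, and at all intermediate lengths $1,\dots,q_k$, is bounded by $C\cdot(r_k^{(j)})^{-\text{const}}$, which via $r_k^{(j)} \asymp 1/|x_k'(E_k^{(j)})|$ and Lemmas~\ref{lem_derivative_Tr_1} and~\ref{lem_derivative_Tr_2} is $q_k^{\beta + o(1)}$ with $\beta = \log\varphi/\inf_p \lyap^u(p) + \rho$ for arbitrary small $\rho$. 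I extend to negative indices and to $\omega\neq 0$ exactly as in the footnote and in the earlier Fibonacci lemma (symmetry of the potential at $\omega=0$, constant correction otherwise), and to complex energies $E + i/T$ with $|E-E_k^{(j)}| < 1/T$ using \cite[Lemma 2.1]{DT03} as in the proof of Theorem~\ref{main_result}.

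Next I would feed this into Theorem~\ref{main_result}: with the set $A(N)$ above and transfer-matrix exponent $\beta$, the set $B(T)$ is the $1/T$-neighborhood of $A(N(T))$, so $|B(T)| \geq \sum_j 2 r_k^{(j)}(\delta)$ over the retained zeroes (once $1/T$ is smaller than those radii), and~\ref{moments_bound} gives
\begin{equation*}
\langle |X|^p_\psi\rangle(T) \geq \frac{\hat C}{2^p}\,|B(T)|\,T^{\frac{p-3\beta}{1+\beta}}.
\end{equation*}
Taking $\liminf$ in $T$ along the sequence $T_k$ matched to $N \asymp q_k$ and using $|B(T_k)| \gtrsim r_k(\delta) \gtrsim q_k^{-\beta}$ (up to $o(1)$ in the exponent, from the same derivative estimate applied to the \emph{largest} inner radius), the polynomial factors combine to yield $\beta^-(\psi,p) \geq \frac{1}{1+\beta} - \frac{1+4\beta}{p(1+\beta)}$, and letting $p\to\infty$ together with~\ref{beta_limits_to_alpha} gives $\alpha_u^-(\psi) \geq \frac{1}{1+\beta}$. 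Finally, letting $\rho \to 0$ (so $\beta \to \log\varphi/\inf_{p\in Per(\Lambda_\lambda)} \lyap^u(p)$) yields~\ref{fibb_exp_lower_1}. The identity $\inf_{p\in Per(T_\lambda)}\lyap^u(p) = \inf_{p\in Per(\Lambda_\lambda)}\lyap^u(p)$ is just a matter of notation for the Fibonacci trace map.

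The main obstacle, and the reason this is not immediate from \cite{DGY}, is handling the positive-product condition~\ref{positive_product_condition}: the zero $E_k^{(j)}$ realizing the largest inner radius $r_k(\delta)$ — the one that drives the lower bound — might be exactly one where $\sprod{\psi}{v_{E_k^{(j)}}}=0$ for every solution, and then discarding it could in principle shrink $|B(T)|$ too much. The fix I intend is the local-in-energy control provided by Lemma~\ref{lem_derivative_Tr_1}: since $\psi$ has finite support, $\sprod{\psi}{v_E}$ is a fixed nonzero polynomial in $E$ (of degree the top index of $\supp\psi$), hence has only finitely many zeroes; within any interval $U$ meeting $\Sigma_\lambda$ one can, for all large $k$, find a zero $E_k \in U$ whose derivative $|x_k'(E_k)|$ has the same exponential rate as $\norm{DTr^k(p)|_{E^u_p}}$ for the relevant periodic $p$. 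Choosing $U$ to avoid the finitely many bad energies (and shrinking $U$ as needed), the retained zeroes still have inner radii with the optimal exponential rate, so~\ref{positive_product_condition} holds on $A(N)$ with a uniform $\varepsilon_0 > 0$ determined by $\min_{E\in U}|\sprod{\psi}{v_E}|$ and the boundedness of $\norm{V_E(s)}, \norm{V_E(-s-1)}$ on the compact $U$. Verifying that this uniform $\varepsilon_0$ and the compactness of $A(N)$ are genuinely available — in particular that one can take $A(N)$ closed and the constant $\varepsilon_0$ independent of $N$ — is the delicate bookkeeping step.
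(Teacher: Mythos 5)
Your treatment of the positive‑product obstruction is in the same spirit as the paper's: since $\sprod{\psi}{v_E}$ is a nonzero polynomial in $E$ with finitely many zeroes, one can restrict to a closed interval $I$ that avoids them and still meets the spectrum, and then Lemmas~\ref{lem_derivative_Tr_1} and~\ref{lem_derivative_Tr_2} (the former is precisely a local‑in‑energy version of the \cite{DGY} derivative estimate, proved for this purpose) guarantee that $\min_{E_k^{(j)}\in I}|x_k'(E_k^{(j)})|$ still has the optimal exponential rate $\inf_p \lyap^u(p)$. That part is sound and matches the paper.

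The core scaling argument, however, has a genuine gap. You propose to apply Theorem~\ref{main_result} as a black box with a transfer‑matrix exponent $\beta$, which forces the length scale $N(T)=T^{1/(1+\beta)}$ and yields $\alpha_u^-\geq 1/(1+\beta)$. But the only available transfer‑matrix bound on $\sigma_k(\delta)$ is $\norm{M(n,0,z,\omega)}\leq Cn^\xi$ with the fixed exponent $\xi$ (called $\eta$ earlier in the paper), and there is no sharper uniform‑in‑$n$ bound on the inner disk $B_{r_k^{(j)}}(E_k^{(j)})$: the trace‑map iteration gives exactly the same $n^\xi$ control at all intermediate lengths $1,\dots,q_k$. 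Your proposed improvement ``$\norm{M}\lesssim (r_k^{(j)})^{-\mathrm{const}}\asymp q_k^{\beta+o(1)}$'' is not justified, and even if it were, the arithmetic fails: with $\beta=\log\varphi/\inf_p\lyap^u(p)$ you would get $\alpha_u^-\geq 1/(1+\beta)=\inf_p\lyap^u(p)/(\inf_p\lyap^u(p)+\log\varphi)$, which is not the claimed $\log\varphi/\inf_p\lyap^u(p)$ (you would need $\beta=\inf_p\lyap^u(p)/\log\varphi-1$, a different quantity). Taking the honest value $\beta=\xi$ in Theorem~\ref{main_result} only reproduces the weaker $1/(1+\eta)$ bound already established via Proposition~\ref{prop_one_energy_inf_many}.

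The paper's proof instead \emph{avoids} Theorem~\ref{main_result}'s rigid scaling altogether. It goes back to the Parseval identity and bounds $P(N,T)$ from below directly, restricting the energy integral to $\sigma_k(\delta)\cap I_{\varepsilon_0}$. The two scales are chosen independently of $\xi$: $N\asymp q_k$ and $1/T\asymp r_k^{\varepsilon_0}(\delta)$, so that $\log N/\log T\to \log\varphi/\inf_p\lyap^u(p)$. The crude bound $\norm{M}\lesssim n^\xi$ is used only to show that the integrand is at least a negative power of $N$, so that $P(N,T)\gtrsim T^{-2}N^{1-2\xi}$ is a (possibly large) but \emph{finite} negative power of $T$; this makes $S^-(\psi,\cdot)$ finite, which is all that $\alpha_u^-$ requires. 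In other words, $\xi$ controls the size of $S^-$, not the value of $\alpha_u^-$ — the latter is governed solely by the ratio of scales fixed by the inner radii. This decoupling is the key idea your proposal misses.
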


\begin{proof}
Denote $u_z = (H-E-\frac{i}{T})^{-1}\psi\in\ell^2(\Z), z = E+i/T$. As in the proof of Theorem~ \ref{main_result} we begin with
\begin{equation}\label{outside_probability_estimate_fibb}
P(N,T) = \frac{1}{\pi T}\sum_{|n|>N}\int\limits_\R |u_z(n)|^2dE \geq \frac{1}{2\pi T}\int\limits_{\sigma_k\cup\sigma_{k+1}}\sum_{|n|>N}^{2N} \norm{M(n,s+1,z,\omega)}^{-2}\norm{U_z(s+1)}^2 dE
\end{equation}
We will estimate the norms of transfer matrices using the trace map approach. For $\norm{U_z}$ we will apply Lemma~\ref{lemma_liminf} with a certain restriction on $z$.

It was shown in \cite[Proposition 3.8]{DGY} that there exist constants $C, \xi > 0$ such that for any $\omega\in[0,1),\,k\in\N, z\in \sigma_k(\delta), 1\leq n\leq q_k$
\begin{equation}\label{transfer_matrix_upper_bound}
\norm{M(n,0,z,\omega)} \leq Cn^\xi
\end{equation}
So we can estimate the integral \ref{outside_probability_estimate_fibb} from below by restricting it to the set
\begin{equation*}
\set{E\in\R: \, E+i/T\in \sigma_k(\delta)}
\end{equation*}

Let $\supp\psi\subseteq[-s,s]$. To use Lemma~\ref{lemma_liminf}, we want to ensure that for at least one solution $v_{E}$ of $Hv = Ev$,
\begin{equation}\label{large_product_condition}
|\sprod{v_{E|s}}{\psi_{|s}}| \geq \varepsilon_0
\end{equation}
for a rich set of energies $E$. $\varepsilon_0$ will be chosen later in the proof.

Fix any initial conditions for a solution of $Hv = Ev$, e.g. Dirichlet condition: $v(0) = 1, v(1) = 1$. Then $v_E(n)$ as a function of $E$ is a polynomial of degree at most $|n|$. Therefore $\sprod{v_{E|s}}{\psi} = p(E)$ is a polynomial of $E$ of degree at most $s$.
For a given $\varepsilon_0$, define
\begin{equation*}
A(\varepsilon_0) = \set{E\in\R : |p(E)|<\varepsilon_0}
\end{equation*}
which consists of at most $s$ disjoint intervals.

Since we are only interested in integrating over energies $z = E + i/T$ inside $\sigma_k$, it follows from \ref{transfer_matrix_upper_bound} that
\begin{equation*}
\norm{V_{E}(s)} \leq \norm{M(s,0,\omega,E)} \norm{V_E(0)} \leq C s^\xi
\end{equation*}
\begin{equation*}
\norm{V_{E}(-s-1)} \leq \norm{M(-s-1,0,\omega,E)} \norm{V_E(0)} \leq C's^\xi.
\end{equation*}
If for some $E$ \ref{large_product_condition} is satisfied, then by Lemma~\ref{lemma_liminf}
\begin{align*}
\lim\limits_{\delta\rightarrow 0}\inf\set{\norm{U_z(s)}^2 + \norm{U_z(-s-1)}^2: z\in\C\backslash\R, |z-E|<\delta} > \\
>\frac{\varepsilon_0^2}{(\norm{V_{E}(s)} + \norm{V_{E}(-s-1)})^2} \geq \frac{\varepsilon_0^2}{2(\norm{V_{E}(s)}^2 + \norm{V_{E}(-s-1)}^2)} \geq \\
\geq \varepsilon_0^2 C_1 =: \varepsilon_1 .
\end{align*}
For any such $E$ define $\Delta(E)$ such that
\begin{equation}\label{difference_solution_estimate}
\norm{U_z(s)}^2 + \norm{U_z(-s-1)}^2 > \varepsilon_1/2
\end{equation}
for any $z\in\C$ satisfying $|z-E|_{max}<\Delta(E)$, where $|x+iy|_{max} = \max\set{|x|,|y|}$. Working with this norm is convenient because the set $B_E(\Delta(E)) = \set{z: |z-E|_{max}<\Delta(E)}$ is an open rectangle in the complex plane.

Choose $K>0$  such that the spectrum of $H$, $\Sigma$ is contained in $[-K,K]$. Let $I_{\varepsilon_0}$ be a closed interval in $E\in[-K-1,K+1]\backslash A(\varepsilon_0)$ that contains a point of the spectrum in the interior. Here we choose $\varepsilon_0$ sufficiently small so that $I_{\varepsilon_0}$ exists.
\begin{equation*}
\set{B_E(\Delta(E)): \, E\in I_{\varepsilon_0}}
\end{equation*}
 is an open cover of $I_{\varepsilon_0}$. Choose a finite subcover and let $\Delta_0$ be the minimal radius (in the $|\cdot|_{max}$ metric) of the obtained rectangles.

Now we consider $T>2/\Delta_0$ in \ref{outside_probability_estimate_fibb} in order to ensure that \ref{difference_solution_estimate} holds for all $z = E + i/T, E\in I_{\varepsilon_0}$.

In order to use \ref{transfer_matrix_upper_bound} in the integral \ref{outside_probability_estimate_fibb} and avoid the set of $E$'s for which \ref{large_product_condition} is not satisfied, i.e. $A(\varepsilon_0)$, we will restrict the integral to the set $\sigma_k\cap I_{\varepsilon_0}$.

Analogously to \ref{inner_radii} we define
\begin{equation*}
r_k^{\varepsilon_0}(\delta) = \max\limits_{I_{\varepsilon_0}}\set{r_k^{(j)}(\delta)}
\end{equation*}
where maximum is taken over $j\in\set{1,\ldots,q_k}$ satisfying $E_k^{(j)}\in I_{\varepsilon_0}$.

As in the proof of \cite[Proposition 3.8]{DGY} define
\begin{equation*}
 l= \frac{\limsup\limits_{k\rightarrow\infty} \frac{1}{k}\log\frac{1}{r_k^{\varepsilon_0}(\delta)}}{\log\varphi} + \rho
\end{equation*}
Then for some $C_\delta>0$, $C_\delta F_k^l \geq \frac{2}{r_k^{\varepsilon_0}(\delta)}$ for any $k$. Fix some $N$ and let  $ T \geq C_\delta N^l$, which means, in particular, $1/T \leq r_k^{\varepsilon_0}(\delta)/2$.

Finally we can estimate \ref{outside_probability_estimate_fibb}:
\begin{multline}\label{outside_probability_estimate_fibb_2}
\ldots \geq \frac{1}{2\pi T} N \int\limits_{J_{\varepsilon_0}} \left( \max\limits_{N< n \leq 2N} \norm{M(n,s+1,E+\frac{i}{T},\omega)} \right)^{-2}\left(\frac{\varepsilon_1}{2}\right)^2 dE =\\
= \frac{1}{2\pi T} N \int\limits_{J_{\varepsilon_0}} \left( \max\limits_{N< n \leq 2N} \norm{M(n-s-1,0,E+\frac{i}{T},\omega-(s+1)\theta)}\right)^{-2}\left(\frac{\varepsilon_1}{2}\right)^2dE \geq \\
\geq \frac{N}{2\pi T} \left(\frac{\varepsilon_1}{2}\right)^2 C^2 (2N)^{-2\xi} |J_{\varepsilon_0}| \gtrsim \frac{1}{T}N^{1-2\xi}\min\set{{r_k^{\varepsilon_0}(\delta), r_{k+1}^{\varepsilon_0}(\delta)}} \gtrsim \frac{1}{T^2}N^{1-2\xi}.
\end{multline}

Now for any sufficiently large $T$, choose $k$ maximal with $C_\delta F_k^l \leq T$. Then
\begin{equation*}
C_\delta F_k^l \leq T < C_\delta F_{k+1}^l \leq 2^lC_\delta F_k^l.
\end{equation*}
Let $N = T^{1/l-\rho'}$, then for sufficiently large $T$, $T> C_\delta N^l$ and it follows from \ref{outside_probability_estimate_fibb_2} that
\begin{equation*}
P(T^{\frac{1}{l}-\rho'},T) \gtrsim T^{-2 + (\frac{1}{l}-\rho')(1-2\xi)},
\end{equation*}
for any $\rho'>0$ such that $\frac{1}{l}-\rho' > 0$.
Then
\begin{equation*}
S^-(\psi,\frac{1}{l}-\rho') \leq 2 - (\frac{1}{l}-\rho')(1-2\xi) < \infty.
\end{equation*}
Therefore
\begin{equation*}
\alpha^-_u \geq \frac{1}{l}-\rho' = \left(\frac{\limsup\limits_{k\rightarrow\infty} \frac{1}{k}\log\frac{1}{r_k^{\varepsilon_0}(\delta)}}{\log\varphi} + \rho\right)^{-1}-\rho'
\end{equation*}
Since $\rho, \rho'$ are arbitrarily close to 0,
\begin{equation*}
\alpha^-_u \geq \frac{\log\varphi}{\limsup\limits_{k\rightarrow\infty} \frac{1}{k}\log\frac{1}{r_k^{\varepsilon_0}(\delta)}}
\end{equation*}

It was proved in \cite[Proposition 3.8]{DGY} that
\begin{equation*}
\frac{1}{r_k^{(j)}} \leq \frac{(2+3\delta)^2}{(1+\delta)(1+2\delta^2)} \vabs{x_k'(E_k^{(j)})}
\end{equation*}
It follows that
\begin{equation*}
\frac{1}{r_k^{\varepsilon_0}} \leq \frac{(2+3\delta)^2}{(1+\delta)(1+2\delta^2)} \min\limits_{I_{\varepsilon_0}} \vabs{x_k'(E_k^{(j)})}
\end{equation*}
which implies
\begin{equation*}
\alpha^-_u \geq \frac{\log\varphi}{\limsup\limits_{k\rightarrow\infty} \frac{1}{k}\log\left(\min\limits_{I_{\varepsilon_0}} \vabs{x_k'(E_k^{(j)})}\right)}
\end{equation*}

In \cite[Lemma 3.6]{DGY} the authors proved that the limit $\lim\limits_{k\rightarrow\infty} \frac{1}{k}\inf\limits_{p\in \Lambda_\lambda}\log\norm{DTr^k(p)|_{E_p^u}}$ exists and is equal to $\inf\limits_{p\in Per(T_\lambda)} \lyap^u(p)$.
From lemmas \ref{lem_derivative_Tr_1} and \ref{lem_derivative_Tr_2} it follows that
\begin{equation*}
\limsup\limits_{k\rightarrow\infty} \frac{1}{k}\log\left(\min\limits_{I_{\varepsilon_0}} \vabs{x_k'(E_k^{(j)})}\right) = \lim\limits_{k\rightarrow\infty} \frac{1}{k}\inf\limits_{p\in \Lambda_\lambda}\log\norm{DTr^k(p)|_{E_p^u}}
\end{equation*}
Thus the proof is complete.
\end{proof}

\subsection{Sturmian Potentials}
The Fibonacci operator is a special case of a more general and also well studied family of operators defined by \ref{fibbonacci_potential} with $\theta $ being an arbitrary irrational number. Such potentials are called \textit{Sturmian}. The band structure of the spectrum analysis for this model was applied to estimating transport exponents for single-site initial states in \cite{DGLQ14}. The ideas described in the previous section can be used to extend some of their results to initial states with wide support.
\begin{theorem} For Lebesgue almost every frequency $\theta$, uniformly in phase $\omega$ and for any exponentially decaying $\psi$ the following holds:
\begin{equation*}
\limsup\limits_{\lambda\rightarrow\infty} \alpha^+_u(\psi)\cdot \log \lambda \leq \frac{\pi^2}{12\log\frac{1+\sqrt{5}}{2}}
\end{equation*}
\end{theorem}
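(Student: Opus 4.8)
The plan is to derive the bound from Corollary~\ref{cor_dyn_upper_bound}, by the same scheme used for the Fibonacci operator in Proposition~\ref{thm_fibb_upper}. It suffices to show: for every $\alpha$ slightly larger than $\frac1{\log\lambda}\cdot\frac{\pi^2}{12\log\varphi}$ there is a scale $N(T)=O(T^{\alpha})$ with
\[
 T^{m}\int_{-K}^{K}\Bigl(\max_{0\le n\le N(T)}\norm{M(n,0,E+\tfrac iT,\omega)}\Bigr)^{-2}\,dE\ \longrightarrow\ 0\qquad(T\to\infty)
\]
for every $m$, uniformly in $E$ and in the phase $\omega$; Corollary~\ref{cor_dyn_upper_bound} then gives $\alpha^+_u(\psi)\le\alpha$, and letting $\alpha$ decrease to the stated value finishes the proof.

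To produce such an $N(T)$ I would set up the Sturmian trace-map apparatus in parallel with Section~\ref{section_application}. Let $\{q_k\}$ and $\{a_k\}$ be the continued-fraction denominators and partial quotients of $\theta$; put $M_k=M(q_k,0,z,\omega)$, $x_k(z)=\Tr M(q_k,0,z,0)$ (a polynomial of degree $q_k$), $\sigma_k(\delta)=\{z:|x_k(z)|\le1+\delta\}$, with outer radii $R_k(\delta)$ as in \eqref{outer_radii}; the S\"ut\H{o} trace recursion expresses $x_{k+1}$ through $x_k,x_{k-1}$ via a Chebyshev polynomial of degree $a_{k+1}$. The key input is the Sturmian analogue of \cite[Prop.~3.8]{DGY}: \emph{if $E+\tfrac iT\in\sigma_k(\delta)$ then, at a scale $N\asymp q_{k+\lfloor\sqrt k\rfloor}$, one has $\norm{M(N,0,E+\tfrac iT,\omega)}\ge(1+\delta)^{g_k}$ with $g_k$ growing faster than every polynomial in $k$}; energies with $E+\tfrac iT$ outside all level-$k$ bands for $k$ past a fixed multiple of $\log T$ give ordinary exponential growth in $n$ and cause no trouble, and the passage from $\omega=0$ to general $\omega$ is the usual one-symbol-shift correction (cf.\ the footnote in Section~\ref{section_application}). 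Choosing $k=k(T)$ by $R_{k(T)}(\delta)\le 1/T<R_{k(T)-1}(\delta)$ and repeating the argument of Proposition~\ref{thm_fibb_upper} verbatim, one gets
\[
 \alpha^+_u(\psi)\ \le\ \frac{\limsup_{k\to\infty}\frac1k\log q_k}{\liminf_{k\to\infty}\frac1k\log\frac1{R_k(\delta)}}.
\]

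It remains to evaluate the two one-sided limits for Lebesgue-a.e.\ $\theta$ as $\lambda\to\infty$. The numerator is the Khinchin--Lévy theorem: for a.e.\ $\theta$ the $\limsup$ is a genuine limit equal to $\frac{\pi^2}{12\log2}$. For the denominator I would feed in the large-coupling band estimates of \cite{DGLQ14}: among all finite continued-fraction blocks the slowest contraction of the level bands is the Fibonacci (all-ones) block, which for a.e.\ $\theta$ recurs with positive frequency, and in the large-$\lambda$ regime this forces $R_k(\delta)\le F_k^{-(1-o_\lambda(1))\log_2\lambda}$ along the $\liminf$-realizing levels, hence $\liminf_k\frac1k\log\frac1{R_k(\delta)}\ge(1-o_\lambda(1))(\log\varphi)(\log_2\lambda)$. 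Dividing, $\limsup_{\lambda\to\infty}\alpha^+_u(\psi)\cdot\log\lambda\le\frac{\pi^2/(12\log2)}{(\log\varphi)/\log2}=\frac{\pi^2}{12\log\varphi}$.

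The main obstacle will be the escape-from-the-bands estimate when $\theta$ has unbounded partial quotients: a large $a_{k+1}$ inserts a high-degree Chebyshev polynomial into the trace recursion, so one must control, uniformly in $E$ and in the $a_j$'s, both the scale at which transfer-matrix growth sets in and its rate; it is precisely the trade-off between the length $q_{k+\lfloor\sqrt k\rfloor}$ and the band radius $R_k(\delta)$ — extremized by the golden-ratio (Fibonacci) case — that produces the constant $\frac{\pi^2}{12\log\varphi}$. A more routine point is checking that the $\theta$'s violating the Lévy asymptotics or the band estimates form a Lebesgue-null set and that every bound above is genuinely uniform in $\omega$.
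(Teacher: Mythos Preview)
Your approach is essentially the one the paper takes: reduce to the template of Proposition~\ref{thm_fibb_upper} via Corollary~\ref{cor_dyn_upper_bound}, with the Sturmian-specific band input supplied by \cite[Proposition~4.8(b)]{DGLQ14}; the paper's own proof is in fact just a one-line remark pointing to that proposition. One caveat: your heuristic for the denominator (``the slowest contraction is the all-ones Fibonacci block, which recurs with positive frequency'') is not quite how the constant $\log\varphi$ arises in \cite{DGLQ14} --- there it comes from the combinatorics of Raymond's band types, where the slowest-shrinking bands obey a Fibonacci-type recursion independently of the partial quotients --- but since both you and the paper ultimately defer to \cite{DGLQ14} for this input, the argument stands.
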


\begin{remark}
The proof is based on an inequality that is similar to \ref{fibb_exponent_upper_1}, which is proved in \cite[Proposition 4.8,b)]{DGLQ14} and is analogous to the proof of Proposition~\ref{thm_fibb_upper}.
\end{remark}

\subsection{H\"older Continuous Quasi-periodic Potentials}
A dynamically defined Schr\"odinger operator is called \textit{quasi-periodic} if the underlying dynamical system is a minimal rotation on the torus $\mathbb{T}^m$. We will focus on the case when $m = 1$, in which case minimal rotations are generated by irrational frequencies. Regularity of the sampling function is the key to the properties of the resulting operator and in particular quantum dynamics.

 Vanishing transport exponents were proved in \cite{JM16} for a class of piecewise H\"older continuous sampling functions under the assumption of positivity and continuity of the Lyapunov exponent.  Using their estimates on the norms of transfer matrices we were able to extend their result \cite[Theorem 1.7]{JM16} to the case of exponentially decaying initial state.

We say that $\theta$ satisfies the Diophantine condition $D_2$ if for some $\kappa>0$
\begin{equation*}
q_{n+1} < q_n^{1+\kappa} \text{ for all large }n.
\end{equation*}
We will need the following notation:
\begin{equation*}
\Phi_{\alpha, m}(E,T) = \inf \frac{1}{T^m}\max\limits_{0\leq n \leq T^\alpha}\norm{M(n,0,z,\omega)}
\end{equation*}
where the infinum is taken over $|z-E|\leq T^{-\alpha}$ and all $\omega\in[0,1)$.
\begin{lemma}[Proposition 2.1 in \cite{JM16}]\label{lemma_JM}
Let $f$ be piecewise H\"older continuous, $\gamma_\theta(E) > \Gamma > 0$ for every $E$ in a compact Borel set $U\subset \R$ and is continuous on $U$. Then for any $0 < \alpha \leq 1, m > 1$, there is $c > 0$ and a sequence of times $(T_n)$ so that for every $E\in U, n\geq 1$
\begin{equation}\label{JM_cond}
\Phi_{\alpha,m}(E,T_n) > c.
\end{equation}
If $\theta$ is $D_2$-Diophantine, then there is $T_0 > 0$ such that \ref{JM_cond} holds for all $T>T_0$.
\end{lemma}

\begin{theorem}\label{thm_JM}
Let the potential be generated by piecewise H\"older $f$ and , $\theta \in \R\backslash \Q$, $\psi$ is exponentially decaying. Assume that the Lyapunov exponent, L(E), is positive for every $E\in \R$ and is continuous on $\R$, then
\begin{equation*}
\alpha_u^-(\theta, \omega) = 0 \,\text{ for any }\, \omega\in[0,1).
\end{equation*}
If in addition $\theta$ satisfies condition $D_2$, then
\begin{equation*}
\alpha_u^+(\theta, \omega) = 0 \,\text{ for any }\, \omega\in[0,1).
\end{equation*}
\end{theorem}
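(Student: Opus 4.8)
The plan is to reduce Theorem~\ref{thm_JM} to Corollary~\ref{cor_dyn_upper_bound} by using the transfer matrix lower bounds supplied by Lemma~\ref{lemma_JM}. First I would observe that positivity and continuity of the Lyapunov exponent $L(E)$ on all of $\R$, combined with the fact that the spectrum $\Sigma$ is compact, lets us pick a compact set $U$ (say a large interval $[-K,K]$ containing the spectrum and with $K > 4$, after rescaling so that $\sigma(H(\omega)) \subseteq [-K+1,K-1]$ uniformly in $\omega$, which is possible since $f$ is bounded) on which $L(E) \geq \Gamma > 0$ for some $\Gamma$. This verifies the hypotheses of Lemma~\ref{lemma_JM} with $\gamma_\theta = L$. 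Applying that lemma with this $U$, a given $\alpha \in (0,1)$, and a given $m > 1$, we obtain $c > 0$ and a sequence $T_n \to \infty$ such that $\Phi_{\alpha, m}(E, T_n) > c$ for all $E \in U$; under the extra Diophantine assumption $D_2$ the same holds for all $T > T_0$.

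Next I would translate the bound $\Phi_{\alpha,m}(E,T_n) > c$ into the integral decay condition~\ref{as_dyn}. Unwinding the definition of $\Phi_{\alpha,m}$, for every $E \in U$, every $\omega$, and every $z$ with $|z - E| \le T_n^{-\alpha}$ we have $\max_{0 \le n \le T_n^\alpha} \norm{M(n,0,z,\omega)} > c\, T_n^m$. In particular, taking $z = E + i/T_n$ (which satisfies $|z - E| = 1/T_n \le T_n^{-\alpha}$ since $\alpha < 1$) and restricting the max to $0 \le n \le C T_n^\alpha / 2$ — here one should choose the constant $C$ in~\ref{as_dyn} so that $C/2 \geq 1$, e.g. $C = 2$, or simply rerun Lemma~\ref{lemma_JM} with $\alpha' $ slightly larger so that $T_n^{\alpha'} \ge C T_n^\alpha/2$ — we get, uniformly in $E \in U \supseteq [-K,K]$ and in $\omega$,
\begin{equation*}
\left(\max_{0 \le n \le \frac{C T_n^\alpha}{2}} \norm{M(n,0,E + i/T_n, \omega)}\right)^{-2} \le c^{-2} T_n^{-2m}.
\end{equation*}
Integrating over $E \in [-K,K]$ costs only the factor $2K$, so the left side of~\ref{as_dyn} is bounded by $2K c^{-2} T_n^{m - 2m} = 2K c^{-2} T_n^{-m} \to 0$, uniformly in $\omega$. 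Thus~\ref{as_dyn} holds along $T_n$ for this value of $m$; since $m > 1$ was arbitrary we get it for every $m$.

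Then Corollary~\ref{cor_dyn_upper_bound} applies (along the sequence $T_n$) and yields $\alpha_u^-(\psi) \le \alpha$ for $H(\omega)$, for every $\omega$. Since $\alpha \in (0,1)$ was arbitrary, letting $\alpha \to 0$ gives $\alpha_u^-(\theta,\omega) = 0$ for every $\omega$. Under the Diophantine condition $D_2$, Lemma~\ref{lemma_JM} gives~\ref{JM_cond} for all $T > T_0$ rather than just a subsequence, so the same computation shows~\ref{as_dyn} holds for all large $T$, and Corollary~\ref{cor_dyn_upper_bound} then gives the stronger conclusion $\alpha_u^+(\theta,\omega) \le \alpha$, hence $\alpha_u^+(\theta,\omega) = 0$. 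The main obstacle, such as it is, is purely bookkeeping: matching the quantifiers on $\alpha$, $m$, the constant $C$, and the ranges of $n$ between the statement of Lemma~\ref{lemma_JM} (which ranges $0 \le n \le T^\alpha$ and $|z - E| \le T^{-\alpha}$) and the precise form of~\ref{as_dyn} (which ranges $0 \le n \le C T^\alpha/2$ and uses $z = E + i/T$), and ensuring the uniformity in $\omega$ is genuinely preserved through the rescaling of the spectrum into $[-K+1,K-1]$. No new analytic input beyond Lemma~\ref{lemma_JM} and Corollary~\ref{cor_dyn_upper_bound} is needed.
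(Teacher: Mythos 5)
Your proposal is correct and follows exactly the route the paper takes: the paper's proof of Theorem~\ref{thm_JM} is just the one-line remark ``Follows from Lemma~\ref{lemma_JM} and Corollary~\ref{cor_dyn_upper_bound},'' and you have faithfully supplied the bookkeeping that justifies it (choosing $U$ compact with $L\ge\Gamma>0$, plugging $z=E+i/T_n$ into $\Phi_{\alpha,m}$, choosing $C\ge 2$ to make the range of $n$ match, and sending $\alpha\to 0$). One tiny clean-up: there is no need to ``rescale'' the operator — since $f$ is bounded one simply takes $K$ large enough that $\sigma(H(\omega))\subseteq[-K+1,K-1]$ for all $\omega$, which is automatic.
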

\begin{proof}
Follows from Lemma~\ref{lemma_JM} and Corollary~\ref{cor_dyn_upper_bound}.
\end{proof}

Positivity \cite{sorets1991} and continuity \cite{BJ02} of the Lyapunov exponent are known for quasi-periodic potentials with analytic sampling function. So we can apply Theorem~\ref{thm_JM}.
\begin{corollary}
If $W(n) = f(n\theta+\omega)$, where $f$ is analytic on $\mathbb{T}$, then the conclusion of Theorem~\ref{thm_JM} holds true.
\end{corollary}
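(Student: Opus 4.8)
The plan is simply to check that the three hypotheses of Theorem~\ref{thm_JM} are met when the sampling function $f$ is analytic on $\mathbb{T}$, and then to invoke that theorem verbatim; there is no new analysis to do beyond assembling known facts. First I would observe that an analytic function on the torus is Lipschitz, hence Hölder continuous with any exponent in $(0,1]$, and in particular piecewise Hölder continuous in the sense required by Lemma~\ref{lemma_JM}. So the regularity hypothesis of Theorem~\ref{thm_JM} costs nothing. Note also that the irrational frequency $\theta$ (and arbitrary phase $\omega$) needed in Theorem~\ref{thm_JM} is exactly the setting of the corollary.

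Next I would supply the two spectral inputs. For positivity of the Lyapunov exponent $L(E)$ at every energy $E\in\R$ I would cite Sorets and Spencer \cite{sorets1991}: for a non-constant analytic sampling function the Lyapunov exponent is bounded below by a positive constant, uniformly in $E\in\R$ (and uniformly in $\omega$), in the relevant regime. For continuity of $E\mapsto L(E)$ on $\R$, with the irrational $\theta$ held fixed, I would cite Bourgain and Jitomirskaya \cite{BJ02}, who established (joint) continuity of the Lyapunov exponent in energy for one-dimensional quasi-periodic Schr\"odinger operators with analytic potential at every irrational frequency. These are precisely the hypotheses ``$L(E)>0$ for every $E$'' and ``$L$ continuous on $\R$'' appearing in Theorem~\ref{thm_JM}.

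With all hypotheses verified, Theorem~\ref{thm_JM} applies and gives $\alpha_u^-(\theta,\omega)=0$ for every phase $\omega\in[0,1)$, and, whenever $\theta$ additionally satisfies the Diophantine condition $D_2$, also $\alpha_u^+(\theta,\omega)=0$ for every $\omega$ — which is exactly the conclusion of Theorem~\ref{thm_JM}. The main point to flag is not an obstacle in the argument, which is a pure citation assembly, but the scope of the cited results: the Sorets--Spencer positivity statement is proved for sufficiently large coupling, so the corollary is to be read with that caveat (or applied in any regime where positivity of $L$ is available by other means), while the genuinely deep ingredient carrying the proof is the continuity theorem of \cite{BJ02}.
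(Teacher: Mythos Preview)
Your proposal is correct and follows exactly the paper's approach: the paper's proof consists of the single sentence preceding the corollary, which cites \cite{sorets1991} for positivity and \cite{BJ02} for continuity of the Lyapunov exponent and then invokes Theorem~\ref{thm_JM}. Your version is in fact more carefully written than the paper's, since you explicitly verify the piecewise H\"older hypothesis and correctly flag that the Sorets--Spencer positivity result requires sufficiently large coupling (a caveat the paper omits).
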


In \cite{DT07} the same result was proved for trigonometric polynomial sampling function $f$ under a weaker Diophantine condition:
\begin{equation}\label{diophantine_condition}
\lim\limits_{k\rightarrow \infty} \frac{\log q_{k+1}}{q_k} = 0
\end{equation}

It also holds true for exponentially decaying initial states.

\subsection{Period Doubling Substitution}
For the period doubling model linear growth of transfer matrices norms for certain energies was shown in \cite{DT03} and can be applied in our situation.

We define the period doubling Hamiltonian as follows. Consider the following substitution $S$ on the alphabet $A = \set{0,1}$: $S(0) = 01, S(1) = 00$. There is a unique one-sided sequence $\omega^+ = 0100010\ldots$ which starts with $0$ and is fixed by $S$. If $L:A^\Z\rightarrow A^\Z$ is the left shift and $\omega^0 = \ldots000\omega^+\in A^\Z$, then define $\Omega_{pd}$ to be the set of limit points of $Orb_L^+(\omega^0)$.

Given $\lambda > 0, \omega = (\ldots \omega_{-1}\omega_0\omega_1\omega_2\ldots)\in\Omega_{pd}$ we define the potential $W_{\lambda,\omega}(n)$ by $W_{\lambda,\omega}(n) = \lambda \omega_n$.

\begin{lemma}\cite[Proposition 4.2]{DT03}\label{lemma_pd_transfer_matrix}
For any $\lambda > 0$ there are countably many energies $E_0$ such that
\begin{equation*}
\norm{M(n,m,E,\omega)} \leq C_1(\lambda,E)(\sqrt{2} + C_2(\lambda,E)|n-m|)
\end{equation*}
where $C_1, C_2$ are constants.
\end{lemma}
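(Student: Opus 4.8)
The plan is to run the standard trace-map analysis for the period-doubling substitution and to isolate the countably many energies at which the hierarchy of substitution-block transfer matrices collapses to powers of a single parabolic matrix. For $E\in\R$ and $\omega=\omega^{+}$, let $M_{n}^{(0)}(E)$ and $M_{n}^{(1)}(E)$ be the transfer matrices over the level-$n$ blocks $S^{n}(0)$ and $S^{n}(1)$, each of length $2^{n}$. From $S^{n+1}(0)=S^{n}(0)S^{n}(1)$ and $S^{n+1}(1)=S^{n}(0)S^{n}(0)$ one gets
\begin{equation*}
M_{n+1}^{(0)}=M_{n}^{(1)}M_{n}^{(0)},\qquad M_{n+1}^{(1)}=\bigl(M_{n}^{(0)}\bigr)^{2},\qquad\text{hence}\qquad M_{n+1}^{(0)}=\bigl(M_{n-1}^{(0)}\bigr)^{2}M_{n}^{(0)},
\end{equation*}
with base matrices $M_{0}^{(0)}(E)=\begin{pmatrix}E&-1\\1&0\end{pmatrix}$ and $M_{0}^{(1)}(E)=\begin{pmatrix}E-\lambda&-1\\1&0\end{pmatrix}$. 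Setting $x_{n}(E)=\Tr M_{n}^{(0)}(E)$ and using $M^{2}=(\Tr M)M-I$ for $M\in SL(2,\R)$ gives the period-doubling trace recursion (in particular $\Tr M_{n+1}^{(1)}=x_{n}^{2}-2$); each $x_{n}$ is a polynomial in $E$, so $\{E:x_{n}(E)=0\}$ is finite.

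Next I would take the countable exceptional set to be a union $\mathcal{E}=\bigcup_{n}$ of finite sets of roots of the trace polynomials — to first approximation $\mathcal{E}=\bigcup_{n}\{E:x_{n}(E)=0\}$. Fix $E_{0}\in\mathcal{E}$ with $x_{n_{0}}(E_{0})=0$. Cayley--Hamilton gives $\bigl(M_{n_{0}}^{(0)}(E_{0})\bigr)^{2}=-I$, so $M_{n_{0}+1}^{(1)}(E_{0})=-I$, and feeding this back into the recursion a short induction shows that for all $k\ge1$ the block matrices $M_{n_{0}+1+k}^{(0)}(E_{0})$ and $M_{n_{0}+1+k}^{(1)}(E_{0})$ are, up to sign, powers $P^{a_{k}}$ of the single matrix $P:=M_{n_{0}+1}^{(0)}(E_{0})$, with $a_{k+1}=a_{k}+2a_{k-1}$ and hence $a_{k}\asymp 2^{k}$. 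The crucial point — a finite trace computation using $x_{n_{0}}(E_{0})=0$ — is that $P$ is parabolic ($\Tr P=\pm2$); for the base case $n_{0}=0$, $E_{0}=0$, one computes $P$ explicitly and finds it unipotent. Since $\norm{P^{a}}\le C(1+C'|a|)$ for parabolic $P$, and the exponent of a level-$n$ block is $\asymp 2^{n}$ i.e.\ comparable to its length, the level-$n$ block matrix at $E_{0}$ has norm $\lesssim 2^{\,n}\asymp(\text{block length})$.

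Finally I would pass to an arbitrary $M(n,m;E_{0})$: using the hierarchical block structure of the period-doubling sequence, the interval between $m$ and $n$ splits into a string of maximal blocks at various levels plus at most one leftover piece per level below $n_{0}$; above level $n_{0}$ every block matrix is $\pm P^{(\cdot)}$, so the product of the high-level factors collapses to a single $\pm P^{a}$ whose exponent $a$ is, by the above proportionality, $\lesssim|n-m|$, while the $O_{E_{0}}(1)$ leftover pieces contribute only a bounded factor. This gives $\norm{M(n,m;E_{0})}\le C_{1}(\lambda,E_{0})\bigl(\sqrt{2}+C_{2}(\lambda,E_{0})|n-m|\bigr)$. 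Passing from $\omega^{+}$ to an arbitrary $\omega\in\Omega_{pd}$ is the usual subword argument: every factor of $\omega$ occurs, up to a bounded shift, in $\omega^{+}$, so the corresponding transfer matrix differs from a canonical block product by a bounded number of one-step matrices, affecting only the constants.

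The main obstacle is the middle step: verifying that at each exceptional energy the post-collapse matrix $P$ is genuinely non-hyperbolic (so that the powers $P^{a_{k}}$, whose exponents grow exponentially in the level, nonetheless grow only linearly in the block length), and then organizing the hierarchical product so that the exponents add up to $O(|n-m|)$ rather than accumulating a polynomial factor from the $O(\log|n-m|)$ blocks involved. This is exactly the content of \cite[Proposition 4.2]{DT03}; equivalently it can be formulated through the period-doubling trace map, the exceptional energies being those whose trace orbit is eventually periodic and lands on a parabolic cycle.
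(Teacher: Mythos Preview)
The paper does not prove this lemma; it is quoted verbatim from \cite[Proposition~4.2]{DT03} and used as a black box. So there is no ``paper's own proof'' to compare against --- your sketch is in fact a reconstruction of the argument in \cite{DT03}, and it is essentially correct.

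One point deserves to be made explicit rather than left as ``a finite trace computation'': the claim that $P=M_{n_0+1}^{(0)}(E_0)$ is parabolic whenever $x_{n_0}(E_0)=0$. This is the heart of the matter and is \emph{not} obvious from Cayley--Hamilton alone. What makes it work is the period-doubling invariant
\[
\Tr\!\bigl(M_n^{(1)}(M_n^{(0)})^{-1}\bigr)\equiv 2\qquad\text{for all }n\ge 0,
\]
which one checks at $n=0$ (the product is upper-triangular unipotent) and then propagates using $M_n^{(1)}(M_n^{(0)})^{-1}=M_{n-1}^{(0)}(M_{n-1}^{(1)})^{-1}$ together with $\Tr(AB^{-1})=\Tr(A^{-1}B)$ in $SL(2,\R)$. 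With $y_n:=\Tr M_n^{(1)}$ this gives the closed two-step recursion
\[
x_{n+1}=x_n y_n-2,\qquad y_{n+1}=x_n^{2}-2,
\]
so $x_{n_0}(E_0)=0$ forces $x_{n_0+1}(E_0)=-2$, i.e.\ $\Tr P=-2$. You verified this only for $n_0=0$; the general case needs the invariant. Once this is in place, the rest of your plan --- the collapse to powers $\pm P^{a_k}$ with $a_{k+1}=a_k+2a_{k-1}\asymp 2^{k}$, the linear bound $\norm{P^{a}}\lesssim 1+|a|$, the hierarchical decomposition of an arbitrary window $[m,n]$ into $O(\log|n-m|)$ blocks whose exponents add to $O(|n-m|)$, and the passage from $\omega^{+}$ to general $\omega\in\Omega_{pd}$ by the subword argument --- goes through exactly as you describe and matches \cite{DT03}.
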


\begin{theorem}
Let $\lambda > 0, \omega\in\Omega_{pd}, \psi$  has finite support. Then $\beta^-(\psi,p) > \frac{1}{2}-\frac{5}{2p}$.
\end{theorem}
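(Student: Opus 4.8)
The plan is to combine Lemma~\ref{lemma_pd_transfer_matrix} with Corollary~\ref{cor_one_energy_finite}, mirroring the structure of the Fibonacci lower bound. First I would observe that Lemma~\ref{lemma_pd_transfer_matrix} provides, for any fixed $\lambda>0$, \emph{countably many} energies $E_0$ at which the transfer matrices grow at most linearly; more precisely, for such an $E_0$ and any $\omega\in\Omega_{pd}$,
\begin{equation*}
\norm{M(n,m,E_0,\omega)}\leq C_1(\lambda,E_0)\bigl(\sqrt{2}+C_2(\lambda,E_0)\,|n-m|\bigr)\lesssim (|n|+|m|+1)^{1}.
\end{equation*}
Absorbing the additive constant into the exponent (at the cost of enlarging the implicit constant), this yields exactly the hypothesis \ref{one_energy_condition} of Corollary~\ref{cor_one_energy_finite} with $\alpha=1$. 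Plugging $\alpha=1$ into \ref{pth_exponent_estimate} gives $\beta^-(\psi,p)\geq \frac{1}{2}-\frac{5}{2p}$, which is the claimed bound, so the only real content is verifying the nonvanishing condition \ref{one_energy_nonzero_cond}.

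The second, and main, step is to fulfill the condition $\sprod{\psi}{v_{E_0}}\neq 0$ for at least one admissible $E_0$. Here I would invoke Proposition~\ref{prop_one_energy_inf_many} (or repeat its short argument): fix Dirichlet-type initial conditions for the solution $v_E$ of $Hv=Ev$, so that each $v_E(n)$ is a polynomial in $E$ of degree $|n|$; then $\sprod{\psi}{v_E}$ is a polynomial in $E$ of positive degree (its degree equals the largest index of a nonzero entry of $\psi$, which is finite since $\psi$ has finite support), hence has only finitely many zeros. Since Lemma~\ref{lemma_pd_transfer_matrix} supplies \emph{infinitely many} (countably many) admissible energies $E_0$, at least one of them is not a zero of this polynomial, so \ref{one_energy_nonzero_cond} holds there. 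Thus Corollary~\ref{cor_one_energy_finite} applies at that energy.

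One mild subtlety to address is that Corollary~\ref{cor_one_energy_finite} requires \ref{one_energy_condition} to hold for \emph{all} $n,m\in\Z$, whereas I should double-check that the bound in Lemma~\ref{lemma_pd_transfer_matrix} is stated for all integer indices and all $\omega\in\Omega_{pd}$ — which it is, by the cited proposition — so no symmetrization or footnote-style shifting argument is needed, unlike in the Fibonacci case. A second point is the reduction from the additive form $\sqrt{2}+C_2|n-m|$ to the form $C(|n|+|m|)^\alpha$ required literally by \ref{one_energy_condition}: for $|n|+|m|\geq 1$ one has $\sqrt{2}+C_2|n-m|\leq \sqrt{2}+C_2(|n|+|m|)\leq (\sqrt{2}+C_2)(|n|+|m|)$, and the single remaining case $n=m=0$ is trivial since $M(0,0,E_0)=\mathrm{Id}$; so \ref{one_energy_condition} holds with $\alpha=1$ and a constant depending on $\lambda,E_0$. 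The main obstacle, such as it is, is purely the polynomial-nonvanishing argument of the previous paragraph; everything else is bookkeeping. Combining these observations with \ref{pth_exponent_estimate} at $\alpha=1$ completes the proof.
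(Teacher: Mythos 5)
Your proposal is correct and matches the paper's approach: the paper's proof is simply ``Lemma~\ref{lemma_pd_transfer_matrix} allows us to apply Proposition~\ref{prop_one_energy_inf_many} with $\alpha=1$,'' and your argument unpacks exactly that, namely converting the additive linear bound to the form \ref{one_energy_condition} with $\alpha=1$ and using the polynomial-nonvanishing argument of Proposition~\ref{prop_one_energy_inf_many} to guarantee \ref{one_energy_nonzero_cond} at one of the countably many admissible energies.
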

\begin{proof}
Lemma \ref{lemma_pd_transfer_matrix} allows us to apply Proposition~\ref{prop_one_energy_inf_many} with $\alpha = 1$, thus proving the estimate.
\end{proof}

\subsection{Thue-Morse Substitution}
The Thue-Morse Hamiltonian is defined in a similar way as the period doubling via the substitution $S(0) = 01, S(1) = 10$. The corresponding space of sequences we denote by $\Omega_{tm}$.
\begin{theorem}
Let $\lambda > 0, \omega\in\Omega_{tm}, \psi$  has finite support. Then $\beta^-(\psi,p) > 1-\frac{1}{p}$.
\begin{proof}
In \cite[Proposition 5.1]{DT03} it was shown that for an infinite set of energies transfer matrices are bounded. Thus Proposition~\ref{prop_one_energy_inf_many} is applicable with $\alpha = 0$.
\end{proof}
\end{theorem}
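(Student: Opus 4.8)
The plan is to reduce this statement to Proposition~\ref{prop_one_energy_inf_many}, exactly as was done for the period doubling model in the preceding subsection. The only input we need from the Thue-Morse literature is a supply of energies at which the transfer matrices grow subpolynomially --- indeed here we have the stronger fact, from \cite[Proposition 5.1]{DT03}, that there is an infinite set of energies $E_0$ at which $\sup_{n,m}\norm{M(n,m,E_0,\omega)} < \infty$, uniformly over $\omega\in\Omega_{tm}$. This is precisely the hypothesis \ref{one_energy_condition} with exponent $\alpha = 0$: a uniform bound $\norm{M(n,m,E_0)} \le C$ trivially satisfies $\norm{M(n,m,E_0)} \le C(|n|+|m|)^0$.

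First I would recall from \cite[Proposition 5.1]{DT03} the statement that boundedness of the transfer matrices at a given energy $E_0$ holds simultaneously for all $\omega\in\Omega_{tm}$, and that the set of such $E_0$ is infinite (in fact it contains, e.g., $E_0 = 0$ and more generally a set related to the trace map fixed points; the exact description is immaterial). Fix the phase $\omega\in\Omega_{tm}$ from the statement. Then for each such $E_0$ the hypothesis \ref{one_energy_condition} holds with $\alpha = 0$. Since $\supp\psi\subset[-s,s]$ is finite and we have \ref{one_energy_condition} verified at infinitely many energies $E_0$, Proposition~\ref{prop_one_energy_inf_many} applies and yields
\begin{equation*}
\beta^-(\psi,p) \ge \frac{1}{1+\alpha} - \frac{1+4\alpha}{p(1+\alpha)} \Big|_{\alpha = 0} = 1 - \frac{1}{p}.
\end{equation*}
This is exactly the claimed bound. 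Note that the passage through Proposition~\ref{prop_one_energy_inf_many} rather than directly through Corollary~\ref{cor_one_energy_finite} is what handles the nuisance condition \ref{one_energy_nonzero_cond}: the argument there observes that $\sprod{\psi}{v_{E}}$ is a nonzero polynomial in $E$ of degree at most $s$, hence vanishes at only finitely many energies, so among our infinitely many good energies $E_0$ at least one also satisfies $\sprod{\psi}{v_{E_0}}\neq 0$, and then Corollary~\ref{cor_one_energy_finite} can be invoked at that energy.

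There is essentially no obstacle here beyond bookkeeping: the nontrivial analytic work (the trace-map analysis producing bounded transfer matrices at infinitely many energies) is already contained in \cite[Proposition 5.1]{DT03}, and the extension from the single-site state $\delta_0$ to an arbitrary finitely supported $\psi$ is exactly what Proposition~\ref{prop_one_energy_inf_many} provides. The one point that deserves a sentence of care is the uniformity of the transfer-matrix bound in $\omega$ --- one should confirm that \cite[Proposition 5.1]{DT03} gives this, or else invoke the standard substitution argument (as in the Fibonacci case, via shifting a bad word by one symbol) to transfer a bound known at one phase to all phases in $\Omega_{tm}$ at the cost of a multiplicative constant, which does not affect the exponent $\alpha = 0$.
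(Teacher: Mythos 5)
Your proposal is correct and follows essentially the same path as the paper: cite \cite[Proposition 5.1]{DT03} for bounded transfer matrices at infinitely many energies, observe that this is condition \ref{one_energy_condition} with $\alpha=0$, and invoke Proposition~\ref{prop_one_energy_inf_many} to handle the nonvanishing condition and conclude $\beta^-(\psi,p)\geq 1-\frac{1}{p}$. The extra remarks on uniformity in $\omega$ and the polynomial-vanishing argument are just elaborations of what Proposition~\ref{prop_one_energy_inf_many} already packages.
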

\begin{corollary}
Under the assumptions of the previous theorem $\alpha^-_u(\psi) = \alpha^+_u(\psi) = 1$.
\end{corollary}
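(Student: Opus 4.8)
The plan is to pin down both exponents by squeezing them between $1$ and $1$: the lower bound comes from the preceding theorem, the matching upper bound from the universal ballistic estimate. First I would recall from \ref{beta_limits_to_alpha} that $\alpha_u^\pm(\psi)=\lim_{p\to\infty}\beta^\pm(\psi,p)$ (the limit exists because $\beta^\pm(\psi,p)$ is monotone in $p$). Since the previous theorem gives $\beta^-(\psi,p)>1-\tfrac1p$ for every $p>0$, letting $p\to\infty$ immediately yields $\alpha_u^-(\psi)\ge 1$. Moreover $S^-(\psi,\alpha)\ge S^+(\psi,\alpha)$ always holds (a $\liminf$ never exceeds the corresponding $\limsup$, and the defining minus sign in \ref{def_S_minus}, \ref{def_S_plus} reverses the inequality), so $\set{\alpha:S^-(\psi,\alpha)<\infty}\subseteq\set{\alpha:S^+(\psi,\alpha)<\infty}$ and hence $\alpha_u^-(\psi)\le\alpha_u^+(\psi)$; in particular $\alpha_u^+(\psi)\ge 1$ as well.

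For the reverse inequality I would invoke the classical fact that a one-dimensional discrete Schr\"odinger operator with bounded potential propagates no faster than ballistically. For a single site this is standard: $\beta^\pm(\delta_j,p)\le 1$ for all $j\in\Z$ and all $p>0$, because $e^{-itH}\delta_j$ is supported, up to super-polynomially small tails, within distance $O(|t|)$ of $j$ (finite speed of propagation / Lieb--Robinson bound for nearest-neighbour hopping), which after the time average in the definition of $\langle|X_{\delta_j}|^p\rangle(T)$ gives $\langle|X_{\delta_j}|^p\rangle(T)\le C_{p,j}\,T^p$ for $T\ge1$. Since $\psi$ has finite support it is a finite linear combination of the $\delta_j$'s, so Proposition \ref{exponent_sublinearity} (applied iteratively over the support) gives $\beta^+(\psi,p)\le 1$ for every $p$, hence $\alpha_u^+(\psi)\le 1$ by \ref{beta_limits_to_alpha}, and a fortiori $\alpha_u^-(\psi)\le 1$. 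Stringing the inequalities together, $1\le\alpha_u^-(\psi)\le\alpha_u^+(\psi)\le 1$, which forces both exponents to equal $1$.

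The main point to watch is bookkeeping rather than analysis: one must use the monotonicity $\alpha_u^-\le\alpha_u^+$ and the ballistic bound in the correct direction, and one must check that the passage from single-site states to the finitely supported $\psi$ is genuinely covered by Proposition \ref{exponent_sublinearity} — its hypothesis that $\beta^+(\psi_i,p)$ ``exists'' is automatic, since $\beta^+$ is defined as a $\limsup$. No appeal to the transfer-matrix hypotheses of Theorem \ref{main_result} or Theorem \ref{thm_upper_bound_transport_via_tr_m} is needed for the upper bound, as the ballistic estimate is uniform over all states and all bounded potentials; the only nontrivial input is the lower bound of the previous theorem.
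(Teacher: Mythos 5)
Your proof is correct, and it supplies exactly the argument the paper leaves implicit: the lower bound $\alpha_u^-(\psi)\geq 1$ from the preceding theorem via \ref{beta_limits_to_alpha}, the ordering $\alpha_u^-\leq\alpha_u^+$ from the definitions \ref{def_S_minus}--\ref{def_S_plus}, and the matching upper bound $\alpha_u^+(\psi)\leq 1$ from the standard ballistic bound for bounded nearest-neighbour Hamiltonians together with Proposition~\ref{exponent_sublinearity} to pass from $\delta_j$ to a finitely supported $\psi$. All steps are sound, and the observation that the existence hypothesis in Proposition~\ref{exponent_sublinearity} is vacuous (a $\limsup$ always exists in $[-\infty,\infty]$) is the right way to dispose of that detail.
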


\subsection{Random Polymer Model}\label{random polymer model}
For a random polymer model with critical energy it was proved in \cite{JSS03} and \cite{JS-B07} that
\begin{equation*}
\beta^-(\delta_0,p) = \max\set{0,1-\frac{1}{2p}}
\end{equation*}
We prove the lower bound for a wide class of compactly supported states.

To construct a potential of the random polymer model one starts with a biinfinite sequence $\omega = (\omega_l)_{l\in\Z} \in \{+,-\}^\Z$.  Let $\hat{W}_\pm = (\hat{W}_\pm(0),\ldots,\hat{W}_\pm(L_\pm-1))$ be a pair of finite sequences of real numbers. These sequences are called potentials of two polymers. The whole potential is constructed as a concatenation : $W_\omega = (\cdots\hat{W}_{\omega_0}\hat{W}_{\omega_0}\cdots)$. The space $\{+,-\}^\Z$ is endowed with probability measure $\mathbf{P}$. It is a product measure of Bernoulli measures on $\{+,-\}$. One can interpret this sequence as a sequence of i.i.d. Bernoulli random variables. To define the probability space $(\Omega,\mathbf{P})$ properly we need to add an extra coordinate to indicate the origin inside $\hat{W}_{\omega_0}$. The precise definition can be found in \cite[Section 4]{JSS03}.

An energy $E_c$ is called critical if the transfer matrices over two polymers have traces less then 2 and commute at this energy or are equal to $\pm Id$. It is readily seen that all transfer matrices are bounded at a critical energy. So we can apply Corollary~\ref{cor_one_energy_finite} with $\alpha = 0$ to get

\begin{theorem}
Let $H_\omega$ be a random polymer model with critical energy $E_c$. For any $\omega\in\Omega$ and any $\psi$ with finite support if $\langle v_{E_c},\psi\rangle \neq 0$ with $v_{E_c}$ being some solution of $H_\omega v = E_cv$, then
\begin{equation*}
\beta^-(\psi,p) \geq 1 - \frac{1}{p}
\end{equation*}
\end{theorem}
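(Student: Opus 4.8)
The plan is to deduce the statement directly from Corollary~\ref{cor_one_energy_finite} applied at the energy $E_0 = E_c$ with exponent $\alpha = 0$. The two hypotheses of that corollary are the transfer-matrix bound \ref{one_energy_condition} and the non-vanishing condition \ref{one_energy_nonzero_cond}; the latter is exactly the assumption $\langle v_{E_c},\psi\rangle \neq 0$ in the theorem, so the only work is to establish \ref{one_energy_condition} in the degenerate form $\norm{M(n,m;E_c)}\le C$ for all $n,m\in\Z$ (note that $C(|n|+|m|)^0 = C$), i.e. uniform boundedness of the transfer matrices at a critical energy.

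First I would record the structure of a critical energy. Let $T_+(E_c)$ and $T_-(E_c)$ be the transfer matrices across the two polymer blocks $\hat W_+$ and $\hat W_-$ at energy $E_c$. By definition of $E_c$, either both equal $\pm Id$, or both have trace of absolute value less than $2$ (hence are elliptic) and they commute. In the first case boundedness of all products is trivial; in the second case a commuting pair of elliptic matrices in $SL(2,\R)$ shares a common complex eigenbasis, so there is a single $Q\in GL(2,\C)$ with $Q T_\pm(E_c) Q^{-1}\in SO(2,\R)$, and therefore every finite product of the $T_\pm(E_c)$ has norm at most $\norm{Q}\,\norm{Q^{-1}}$, independently of the number of factors.

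Next I would pass from block products to a general $M(n,m;E_c)$. Writing the potential $W_\omega$ as a concatenation of the polymer blocks $\hat W_{\omega_l}$, the matrix $M(n,m;E_c)$ factors as a product of at most $\lceil |n-m|/\min\{L_+,L_-\}\rceil + 2$ pieces: a central product of full blocks $T_\pm(E_c)$, together with at most two boundary pieces arising from partial blocks at $n$ and $m$. Each full-block piece contributes a bounded factor by the previous step, and each boundary piece is a product of at most $\max\{L_+,L_-\}$ single-step matrices $A(j,E_c)$, whose norms are bounded in terms of $E_c$ and the finitely many values taken by the polymer potentials. Hence $\norm{M(n,m;E_c)}\le C$ with $C$ depending on $E_c$, $\hat W_\pm$ and $\omega$ but not on $n,m$, which is \ref{one_energy_condition} with $\alpha = 0$. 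Corollary~\ref{cor_one_energy_finite} then gives $\beta^-(\psi,p)\ge \frac{1}{1+\alpha}-\frac{1+4\alpha}{p(1+\alpha)}$, which at $\alpha=0$ is $1-\frac1p$.

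The one point needing care is the uniform boundedness step: one must verify that $\norm{M(n,m;E_c)}$ genuinely does not grow with $|n-m|$, which hinges on the simultaneous conjugation of the commuting elliptic pair into rotations (so that arbitrarily long products stay bounded rather than growing), and on absorbing the two partial-block contributions into the constant. Everything else is a direct application of the earlier corollary.
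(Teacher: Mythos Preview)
Your proposal is correct and follows exactly the paper's approach: the paper observes that at a critical energy all transfer matrices are uniformly bounded (``it is readily seen'') and then applies Corollary~\ref{cor_one_energy_finite} with $\alpha=0$. You have simply supplied the details the paper omits---the simultaneous conjugation of the commuting elliptic block matrices into rotations, and the absorption of the two partial-block boundary factors into the constant---so your argument is in fact a fleshed-out version of the paper's one-line justification.
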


However the authors of \cite{JSS03} showed a stronger statement: almost surely the transfer matrices remain bounded in some interval of energies on a suitable length scale. In particular, they proved

\begin{lemma}[Theorem 7 in \cite{JSS03}]
Let $|\langle e^{2i\eta_\pm}\rangle| \neq 1$\footnote{$\eta_\pm$ are defined as the rotation numbers of the corresponding one polymer transfer matrices.} and $\epsilon > 0$. Then there are $0 < c,c',C < \infty$ such that for any $N\in\N$ there exists a set $\Omega_N(\epsilon)\subset \Omega$ satisfying $\mathbf{P}(\Omega_N(\epsilon)) = \mathcal{O}(e^{-cN^\epsilon})$ and such that for any $\omega \in \Omega\backslash\Omega_N(\epsilon)$ one has
\begin{equation}\label{polymer_transfer_matr_estimate}
\norm{M(n,m,E_c+\delta+is,\omega)} < C
\end{equation}
for all $0\leq m\leq n \leq N$, $|\delta| \leq N^{-1/2-\epsilon}$, $|s| < c'/N$.
\end{lemma}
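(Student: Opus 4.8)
The statement is quoted from \cite{JSS03}; here is the strategy I would follow.

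First I would put the one-polymer transfer matrices into normal form. Write $T_\pm$ for the two one-polymer transfer matrices at the critical energy $E_c$; by hypothesis they commute and are either $\pm\mathrm{Id}$ or elliptic ($|\Tr T_\pm|<2$). Commuting elliptic elements of $SL(2,\R)$ lie in a common maximal compact subgroup, so there is $B\in SL(2,\R)$ with $B^{-1}T_\pm B=R(\eta_\pm)$, the rotation by $\eta_\pm$; these are the angles in the hypothesis, and $e^{2i\eta_\pm}$ (rather than $e^{i\eta_\pm}$) enters below because the relevant phase variable transforms by the square under the projective action. Next I would reduce from site indices to a count of complete polymers: for $0\le m\le n$ the matrix $M(n,m,z,\omega)$ factors as (a partial polymer)$\,\times\,$(a product of complete polymers)$\,\times\,$(a partial polymer), and the two boundary factors have norm bounded by a constant depending only on $\sup|\hat W_\pm|$ and $L_\pm$. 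So it suffices to bound products $P$ of up to $N$ consecutive one-polymer transfer matrices at energy $z=E_c+\delta+is$ with $|\delta|\le N^{-1/2-\epsilon}$, $|s|<c'/N$ (so $|z-E_c|\lesssim N^{-1/2-\epsilon}$).

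The heart of the matter is to track $P$ in Pr\"ufer (polar) coordinates rather than to linearise the whole product, which would only give the useless bound $\norm{P}\lesssim|z-E_c|N$. Conjugating by $B$ and expanding $B^{-1}T_\pm(z)B=R(\eta_\pm)+(z-E_c)D_\pm+O(|z-E_c|^2)$, I would fix a unit vector $v_0$ and follow $\rho_k=\log\norm{P_k v_0}$ together with the argument $\theta_k$ of $P_k v_0$, obtaining increments
\[
\rho_k-\rho_{k-1}=\mathrm{Re}\big[(z-E_c)\,a_{\omega_k}e^{2i\theta_{k-1}}\big]+\mathrm{Re}\big[(z-E_c)\,b_{\omega_k}\big]+O(|z-E_c|^2),\qquad \theta_k=\theta_{k-1}+\eta_{\omega_k}+O(|z-E_c|).
\]
Two points keep $\rho_n=O(1)$ on a set of overwhelming probability. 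First, the deterministic drift is small: the Lyapunov exponent $L(E)\ge 0$ vanishes at $E_c$, hence attains a minimum there, so the $\delta$-linear part of $\mathrm{Re}\langle b\rangle$ vanishes; what survives of the mean increment is $\mathrm{Re}[(\delta+is)\langle b\rangle]=-s\,\mathrm{Im}\langle b\rangle$ up to an equidistribution correction, and $|s|\,n\le(c'/N)N=c'$ is bounded. Second, the fluctuations are $O(N^{-\epsilon})$: the martingale part of $\sum_k\mathrm{Re}[(z-E_c)(b_{\omega_k}-\langle b\rangle)]$ is a sum of at most $N$ mean-zero terms of size $O(|z-E_c|)=O(N^{-1/2-\epsilon})$, hence is $O(N^{-\epsilon})$ outside a set of probability $O(e^{-cN^{2\epsilon}})$ by Azuma--Hoeffding; and the oscillatory sum $\sum_k\mathrm{Re}[(z-E_c)a_{\omega_k}e^{2i\theta_{k-1}}]$ has size $O(|z-E_c|\sqrt N)=O(N^{-\epsilon})$, again with exponentially small failure probability, \emph{because the angular process $\theta_k$ equidistributes on the circle}. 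This is exactly where $|\langle e^{2i\eta_\pm}\rangle|\ne 1$ is used: it says precisely that the squared angular increments are not almost surely constant, so the perturbed random dynamical system $\theta_{k-1}\mapsto\theta_k$ on the circle mixes, the Fourier-mode-$2$ correlations decay geometrically, successive increments decorrelate, and one obtains square-root cancellation together with the required concentration (a large-deviation bound for additive functionals of a mixing chain).

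Finally I would promote the single triple $(n,m,z)$ to all of them. Running the above with $m=0$ over the finitely many relevant directions $v_0$ bounds $\norm{P_k}$ for one fixed $k\le N$ and one fixed $z$; a maximal (Doob/L\'evy) version of the Azuma and oscillatory-sum bounds controls $\max_{k\le N}|\rho_k|$ at no cost in the exponent, and since in $SL(2,\R)$ one has $\norm{M(n,m,z)}\le\norm{P_n}\,\norm{P_m^{-1}}=\norm{P_n}\,\norm{P_m}$, this handles all $0\le m\le n\le N$ at once. To cover all $z$ in the prescribed set I would take a net of $N^{O(1)}$ points, take the union of the exceptional sets (still $O(e^{-cN^{2\epsilon}})\cdot N^{O(1)}=O(e^{-cN^{\epsilon}})$ after shrinking $c$), and interpolate: $M(n,m,z)$ is analytic in $z$ with $\norm{\partial_z M(n,m,z)}$ bounded on the disk by the already-controlled norm times a polynomial in $N$, so a net of super-polynomially small spacing makes the interpolation error harmless; collecting all exceptional sets yields $\Omega_N(\epsilon)$ with $\mathbf{P}(\Omega_N(\epsilon))=O(e^{-cN^\epsilon})$. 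The main obstacle is the oscillatory estimate of the previous paragraph: establishing $O(\sqrt N)$ cancellation in $\sum_k\mathrm{Re}[(z-E_c)a_{\omega_k}e^{2i\theta_{k-1}}]$ with exponentially small failure probability and uniformly in $z$, given that the error in $\theta_k$ accumulates to $O(N^{1/2-\epsilon})$ over $N$ steps, so $\theta_k$ cannot be compared directly with the free random walk $\sum_i\eta_{\omega_i}$ and one must work at the level of the random dynamical system on the circle, extracting a spectral gap from $|\langle e^{2i\eta_\pm}\rangle|\ne 1$ and coupling it to a concentration inequality. Everything else is bookkeeping with bounded boundary conjugations and nets.
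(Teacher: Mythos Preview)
The paper does not prove this lemma at all: it is stated as ``Theorem 7 in \cite{JSS03}'' and used as a black box, so there is no proof in the present paper to compare your proposal against. You already acknowledge this in your opening line.

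As for the substance of your sketch relative to the original source: the overall architecture is right. The reduction to complete polymer blocks via bounded boundary factors, the simultaneous conjugation of the commuting elliptic $T_\pm$ to rotations $R(\eta_\pm)$, the passage to Pr\"ufer variables $(\rho_k,\theta_k)$, and the identification of $|\langle e^{2i\eta_\pm}\rangle|\neq 1$ as the condition driving equidistribution of the phase process are all in line with how \cite{JSS03} proceeds. Your heuristic for why the drift vanishes to first order (minimality of the Lyapunov exponent at $E_c$) is morally correct but is not quite how the cited paper argues; there the vanishing comes out of the explicit Pr\"ufer expansion together with the oscillatory-sum estimate, rather than from differentiating $L(E)$. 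You correctly flag the genuine difficulty: obtaining the concentration bound for the oscillatory sum $\sum_k \mathrm{Re}[(z-E_c)a_{\omega_k}e^{2i\theta_{k-1}}]$ uniformly in $z$ when $\theta_k$ is itself perturbed, and your proposed resolution via a spectral gap for the circle dynamics induced by the random rotations is the right mechanism. One point to tighten: your net-in-$z$ argument is heavier than necessary, since in \cite{JSS03} the large-deviation estimate is obtained with constants uniform over the stated range of $(\delta,s)$ directly from the perturbative expansion, without a separate covering step.
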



This lemma allows us to prove the main result of this section.
\begin{theorem}\label{thm_polymer_lower_bound}
Let $H_\omega$ be a random polymer model with critical energy $E_c$ and  \\$|\langle e^{2i\eta_\pm}\rangle| \neq 1$. There is a full measure set $\tilde\Omega\subset\Omega$ such that for any $\omega\in\tilde\Omega$ and any finitely supported initial state $\psi$, satisfying $\langle v_{E_c},\psi\rangle \neq 0$ with $v_{E_c}$ being some solution of $H_\omega v = E_cv$,
\begin{equation*}
\beta^-(\psi,p) \geq 1 - \frac{1}{2p}
\end{equation*}
\begin{proof}
Define $\tilde\Omega = \bigcup\limits_{N = 1}^\infty \bigcap\limits_{K\geq N}^\infty \Omega\backslash\Omega_K(\epsilon)$. By the Borel-Cantelli lemma $\mathbf{P}(\tilde\Omega) = 1$ and for every $\omega\in\tilde\Omega$ \ref{polymer_transfer_matr_estimate} holds for all $N>N(\omega)$.

Let $\omega\in\tilde\Omega$ and $\epsilon > 0$. We want to use the set $(E_c - N^{-1/2-\epsilon},E_c + N^{-1/2-\epsilon})$ as $A(N)$ in the theorem \ref{main_result}. Since the polynomial $p(E) = \sprod{v_E}{\psi}$, where $v_E $ is the solution to the eigenvalue equation with the same initial conditions as $v_{E_c}$ in the assumption, is non-zero at $E_c$, there is a neighborhood of $E_c$ where $|p(E)|> \varepsilon_0 > 0$. Denote this neighborhood by $U(E_c)$. To ensure that conditions  \ref{transfer_matrix_bound} and \ref{positive_product_condition} are satisfied for all energies in $A(N)$, pick $N_0$ large enough so that $N_0 > N(\omega)$ and $A(N_0)\subset U(E_c)$.

Now the conclusion \ref{moments_bound} of the Theorem~\ref{main_result} holds for all sufficiently large $T$ with $\alpha = 0$ and
\begin{equation*}
|B(T)| = |A(T)|+\frac{2}{T} \geq C_1 T^{-\frac{1}{2}-\epsilon}
\end{equation*}
with some universal constant $C_1$.
Then from the estimate \ref{moments_bound} it follows that for large enough $T$,
\begin{equation*}
\langle|X|^p_\psi \rangle(T) \geq C_2 T^{p-\frac{1}{2}-\epsilon}
\end{equation*}
Thus $\beta^-(\psi,p) \geq 1-\frac{1}{2p}-\frac{\epsilon}{p}$ and since $\epsilon$ is arbitrary, the proof is complete.

\end{proof}
\end{theorem}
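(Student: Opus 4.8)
The plan is to apply Theorem~\ref{main_result} with $\alpha = 0$, taking for $A(N)$ a window around $E_c$ that shrinks at the rate $N^{-1/2-\epsilon}$ on which, almost surely, the transfer matrices are uniformly bounded by Theorem 7 of \cite{JSS03}. First I would construct the full-measure set. For a fixed $\epsilon>0$ the exceptional sets satisfy $\mathbf{P}(\Omega_N(\epsilon)) = \mathcal{O}(e^{-cN^\epsilon})$, which is summable, so Borel--Cantelli gives that $\tilde\Omega_\epsilon := \bigcup_{N}\bigcap_{K\ge N}(\Omega\setminus\Omega_K(\epsilon))$ has full measure; intersecting over $\epsilon = 1/j$ produces a single full-measure set $\tilde\Omega$, independent of $\psi$, such that for each $\omega\in\tilde\Omega$ and each $\epsilon = 1/j$ the estimate \ref{polymer_transfer_matr_estimate} holds for all $N$ beyond some $N(\omega,\epsilon)$.

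Next, fix $\omega\in\tilde\Omega$, a finitely supported $\psi$ with $\sprod{v_{E_c}}{\psi}\neq 0$, and $\epsilon>0$, and pick $K$ with $\sigma(H_\omega)\subset[-K,K]$. I would set $A(N) = [E_c - N^{-1/2-\epsilon},\, E_c + N^{-1/2-\epsilon}]$ (truncated to $[-K,K]$, which is vacuous for large $N$). Verifying \ref{transfer_matrix_bound} with $\alpha=0$ means checking $\norm{M(n,m;E)}\le C$ for $|n|,|m|<N$ and $E\in A(N)$: the cited lemma gives this for $0\le m\le n\le N$ and complex energies $E_c+\delta+is$ with $|\delta|\le N^{-1/2-\epsilon}$, $|s|<c'/N$; the real case is $s=0$, and the remaining index ranges follow from $\det M = 1$ (so $\norm{M(n,m)} = \norm{M(m,n)^{-1}} = \norm{M(m,n)}$) together with translation covariance of the polymer family. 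For \ref{positive_product_condition}, fix initial data for a solution of $Hv=Ev$; then $\sprod{v_{E|s}}{\psi} = p(E)$ is a polynomial in $E$, and $p(E_c)\neq 0$ by hypothesis, so $|p(E)|>\varepsilon_0>0$ on some neighborhood $U(E_c)$. Choosing $N_0\ge N(\omega,\epsilon)$ so large that $A(N)\subset U(E_c)$ for all $N\ge N_0$ makes both hypotheses of Theorem~\ref{main_result} hold.

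Finally I would invoke the conclusion \ref{moments_bound} with $\alpha=0$, so $N(T)=T$ and $B(T)$ is the $1/T$-neighborhood of $A(T)$, whence $|B(T)|\ge |A(T)| = 2T^{-1/2-\epsilon}$ and therefore $\langle|X|^p_\psi\rangle(T) \gtrsim |B(T)|\,T^{p} \gtrsim T^{p-1/2-\epsilon}$ for all large $T$. This yields $\beta^-(\psi,p) \ge 1 - \tfrac{1}{2p} - \tfrac{\epsilon}{p}$, and letting $\epsilon\to 0$ completes the argument.

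The step I expect to be the main obstacle is the same one that already distinguishes the finitely-supported case from the $\delta_0$ case elsewhere in the paper: the positive-product condition \ref{positive_product_condition} must hold not merely at the single critical energy $E_c$ but uniformly over the entire shrinking window $A(N)$. This is exactly what the polynomial-in-$E$ structure of $\sprod{v_E}{\psi}$ plus continuity buys us, once $A(N)$ is forced into the good neighborhood $U(E_c)$. A secondary, routine point is deducing the two-sided transfer-matrix bound required by \ref{transfer_matrix_bound} from the one-sided estimate \ref{polymer_transfer_matr_estimate} via $\det M = 1$; and one should be slightly careful that the full-measure set $\tilde\Omega$ is built once and for all, so that the $N_0$ depending on $\psi$ only enters through ``$T$ sufficiently large.''
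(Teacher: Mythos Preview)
Your proposal is correct and follows essentially the same route as the paper's proof: Borel--Cantelli to build $\tilde\Omega$, the shrinking window $A(N)=[E_c-N^{-1/2-\epsilon},E_c+N^{-1/2-\epsilon}]$, the polynomial-in-$E$ argument to secure \ref{positive_product_condition} near $E_c$, and Theorem~\ref{main_result} with $\alpha=0$. Your version is in fact slightly more careful than the paper's in two places: you intersect over $\epsilon=1/j$ so that $\tilde\Omega$ is genuinely independent of $\epsilon$ (the paper fixes $\epsilon$ before defining $\tilde\Omega$ and then lets $\epsilon\to 0$), and you flag the passage from the one-sided bound $0\le m\le n\le N$ in \ref{polymer_transfer_matr_estimate} to the two-sided bound required by \ref{transfer_matrix_bound}, which the paper leaves implicit.
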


\begin{remark}
The condition $\sprod{v_{E_c}}{\psi} \neq 0$ is generic in $\psi$ in the following sense. Let's restrict our attention to the set  $\set{\psi|\supp\psi = [a,b]} \cong \R^{b-a+1}$. Since the vector space of solutions to $H_\omega v = E v$ is two dimensional and every solution is determined by the initial values $V(0)$, then the nonvanishing of the inner product condition is not satisfied for $\psi$ satisfying two equalities :
\begin{align}
\begin{split}\label{condition_zero_product}
&\sprod{v_{E_c}^1}{\psi} = 0 \\
&\sprod{v_{E_c}^2}{\psi} = 0
\end{split}
\end{align}
where $v_{E_c}^1$ and $v_{E_c}^2$ are two solutions of $H_\omega v = E_c v$ with initial conditions $V^1(a) = \colvec{2}{0}{1}$ and $V^2(a) = \colvec{2}{1}{0}$. Clearly $v^1_{|[a,b]}$ and $v^2_{|[a,b]}$ are not collinear, so condition \ref{condition_zero_product} defines a subspace of codimension 2 in $\R^{b-a+1}$.
So Theorem~\ref{thm_polymer_lower_bound} can be reformulated for almost every $\psi$ supported on $[a,b]$.
\end{remark}
\begin{remark}
In the special case when $|\supp\psi| = 2$, the assumption of  Theorem~\ref{thm_polymer_lower_bound} is satisfied for any $\psi$.
\end{remark}
\begin{remark}
The set of exceptional states satisfying \ref{condition_zero_product} depends on $\omega$, however note that only a finite segment of the potential, $\omega_{|[a,b]}$, defines this condition. From the construction of $\omega$ from two fixed sequences it follows that there are finitely many different patterns of length $b-a+1$ that may appear in $\omega$. Call these patterns $\omega^1,\ldots,\omega^K\in \R^{b-a+1}$. Then the complement of the union of $K$ codimension 2 subspaces defined by \ref{condition_zero_product} consists of vectors for which nonvanishing condition from the theorem is satisfied.

Thus we can reformulate the theorem in the following way:
\end{remark}
\begin{corollary}
Let $H_\omega$ be a random polymer model with critical energy $E_c$ and \\ $|\langle e^{2i\eta_\pm}\rangle| \neq 1$. Fix some $a,b\in\Z$. For almost every $\psi$ with $\supp\psi = [a,b]$ and almost every $\omega\in\Omega$
\begin{equation*}
\beta^-(\psi,p) \geq 1 - \frac{1}{2p}.
\end{equation*}
\end{corollary}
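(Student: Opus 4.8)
The plan is to bootstrap Theorem~\ref{thm_polymer_lower_bound} using the observation in the preceding remark: the non-degeneracy hypothesis $\sprod{v_{E_c}}{\psi}\neq 0$ fails, as $\omega$ ranges over all of $\Omega$, only on a fixed Lebesgue-null set of states supported on $[a,b]$. First I would fix the parameter space $V=\set{\psi\in\ell^2(\Z):\supp\psi\subseteq[a,b]}\cong\R^{b-a+1}$ with Lebesgue measure, and note that $\set{\psi:\supp\psi=[a,b]}$ is a full-measure subset of $V$ (its complement is the union of the two coordinate hyperplanes $\psi_a=0$, $\psi_b=0$); hence it suffices to produce a null set $Z\subset V$ such that for every $\psi\in V\setminus Z$ the conclusion holds for a.e.\ $\omega$.

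The key point is that whether $\sprod{v_{E_c}}{\psi}=0$ for \emph{every} solution $v$ of $H_\omega v=E_cv$ depends on $\psi$ only through $\psi_{|[a,b]}$ and on $\omega$ only through the values of the potential $W_\omega$ on a fixed finite window containing $[a,b]$ (these are all that enter in propagating a solution from site $a$ to site $b$ and pairing it against $\psi$). By the construction of the random polymer potential as a concatenation of the two fixed blocks $\hat W_\pm$, this finite word realizes only finitely many patterns $\omega^1,\dots,\omega^K$ as $\omega$ ranges over $\Omega$. For each $i$ I would argue exactly as in the last remark: letting $v^1,v^2$ solve $Hv=E_cv$ for the potential determined by $\omega^i$ with $V^1(a)=\colvec{2}{0}{1}$ and $V^2(a)=\colvec{2}{1}{0}$, the restrictions $v^1_{|[a,b]}$ and $v^2_{|[a,b]}$ are linearly independent in $\R^{b-a+1}$, so the two linear conditions $\sprod{v^1_{|[a,b]}}{\psi}=\sprod{v^2_{|[a,b]}}{\psi}=0$ cut out a subspace $Z_i\subset V$ of codimension $2$, hence of Lebesgue measure zero. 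Set $Z=\bigcup_{i=1}^K Z_i$, still a null set. Then for every $\psi\in V\setminus Z$ and every $\omega\in\Omega$ there is a solution $v_{E_c}$ of $H_\omega v=E_cv$ with $\sprod{v_{E_c}}{\psi}\neq 0$.

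Finally I would invoke Theorem~\ref{thm_polymer_lower_bound}, which supplies a full-measure set $\tilde\Omega\subset\Omega$ (depending only on the polymer data, not on $\psi$) such that for every $\omega\in\tilde\Omega$ and every finitely supported $\psi$ satisfying $\sprod{v_{E_c}}{\psi}\neq 0$ one has $\beta^-(\psi,p)\geq 1-\tfrac{1}{2p}$. Combining the two facts: for every $\psi\in V\setminus Z$ (in particular a.e.\ $\psi$ with $\supp\psi=[a,b]$) and every $\omega\in\tilde\Omega$ (a.e.\ $\omega$), the hypothesis of Theorem~\ref{thm_polymer_lower_bound} is verified, giving the stated bound. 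I expect the only genuinely delicate step to be the middle one — checking carefully that, because of the block structure and the extra ``origin'' coordinate built into the probability space $(\Omega,\mathbf P)$, the restriction of $W_\omega$ to a fixed window still takes only finitely many values; granting that, the remainder is a routine union-of-null-sets argument, and the quantifier order in the statement is harmless since $Z$ and $\tilde\Omega$ are produced independently of one another.
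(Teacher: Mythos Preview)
Your proposal is correct and follows essentially the same approach as the paper: the argument is precisely the content of the remark(s) immediately preceding the corollary, namely that the exceptional set of $\psi$'s depends on $\omega$ only through a finite pattern of $W_\omega$, of which there are finitely many, so the union of the corresponding codimension-two subspaces is Lebesgue-null in $\R^{b-a+1}$, and one then applies Theorem~\ref{thm_polymer_lower_bound} on the full-measure set $\tilde\Omega$. Your write-up is somewhat more explicit about the quantifier order and the independence of $Z$ and $\tilde\Omega$, but the substance is identical.
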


\bibliography{BiblBase}
\bibliographystyle{plain}

\end{document}